\newtheorem{theorem}{Theorem}[section]
\newtheorem{lemma}[theorem]{Lemma}
\newtheorem{proposition}[theorem]{Proposition}
\newtheorem{corollary}[theorem]{Corollary}
\theoremstyle{definition}
\newtheorem{definition}[theorem]{Definition}
\newtheorem{remark}[theorem]{Remark}
\newtheorem{example}[theorem]{Example}
\newcommand{\re}{\mathop{\mathrm{Re}}\nolimits}
\newcommand{\Sp}{\mathop{\mathrm{sp}}\nolimits}
\newcommand{\rank}{\mathop{\mathrm{rank}}\nolimits}
\newcommand{\card}{\mathop{\mathrm{card}}\nolimits}
\newcommand{\supp}{\mathop{\mathrm{supp}}\nolimits}
\newcommand{\1}{\mathbf{1}}
\newcommand{\CC}{\mathbb C}
\newcommand{\A}{\mathfrak{A}}
\newcommand{\B}{\mathfrak{B}}
\newcommand{\cB}{\mathcal{B}}
\newcommand{\cK}{\mathcal{K}}
\newcommand{\cH}{\mathcal{H}}
\newcommand{\cP}{\mathcal{P}}
\newcommand{\cN}{\mathcal{N}}
\def\PP{\mathbb P}
\newcommand{\perps}{\mathrel{\perp_{BJ}^s}}
\begin{document}
\title[Preservers of strong Birkhoff-James orthogonality]{On preservers of strong Birkhoff-James orthogonality between $C^*$-algebras}

\author{Bojan Kuzma}
\address[B. Kuzma]{${}^1$University of Primorska and ${}^{2}$Institute of Mathematics, Physics, and Mechanics}
\email{bojan.kuzma@upr.si}

\author{Srdjan Stefanovi\'{c}}
\address[S. Stefanovi\'{c}]{University of Belgrade, Faculty of Mathematics, Student\/ski trg 16-18, 11000 Belgrade, Serbia}
\email{srdjan.stefanovic@matf.bg.ac.rs}

\author{Ryotaro Tanaka}
\address[R. Tanaka]{Katsushika Division, Institute of Arts and Sciences, Tokyo University of Science, Tokyo 125-8585, Japan}
\email{r-tanaka@rs.tus.ac.jp}

\thanks{The work of the first author is supported in part by the Slovenian Research Agency (research program P1-0285 and research projects J1-50000 N1-0296 and N1-0428). The second author is supported by the Ministry of Science, Technological Development and Innovation of Republic of Serbia: grant number 451-03-47/2023-1/200104 with Faculty of Mathematics.
The third author was supported by JSPS KAKENHI Grant Number JP24K06788.}

\date{}
\begin{abstract}
It is shown that every linear strong Birkhoff-James isomorphism between unital $C^*$-algebras is a $*$-isomorphism followed by a unitary multiplication. Moreover, as a partial extension of this result to the non-unital case, the form of (possibly nonlinear) strong Birkhoff-James isomorphisms between compact $C^*$-algebras are determined. A nonlinear characterization of compact $C^*$-algebras in terms of strong Birkhoff-James orthogonality is also given.
\end{abstract}
\keywords{$C^*$-algebra, Compact $C^*$-algebra, Strong Birkhoff-James orthogonality,  Preservers.}
\subjclass{Primary 46L05; Secondary 46L35, 46B20, 46B80, 47B49.}
\maketitle
%%%%%%%%%%%%%%%%%%%%%%%%%%%%%%%%%%%%%%%%%%%%%%%%%%%%%%%%%%
%%%%%%%%%%%%%%%%%%%%%%%%%%%%%%%%%%%%%%%%%%%%%%%%%%%%%%%%%%
\section{Introduction}

An $\mathscr{S}$-preserver is a mapping that preserves a certain structure $\mathscr{S}$ of mathematical objects. As basic examples, a linear isomorphism is a linear-structure preserver and a homeomorphism is a topological-structure preserver. In most cases, preservers can be formulated as the class of (iso)morphisms in some concrete category, and the class of objects are chosen between those of Banach spaces and Hilbert $C^*$-modules depending on structures under consideration. It is important to obtain detailed descriptions of various preservers as barometers for amounts of information contained in the corresponding structures. In this direction, the finer the structure $\mathscr{S}$ is, the more restrictive $\mathscr{S}$-preservers are. A problem that asks for the form of $\mathscr{S}$-preservers is called a preserver problem of the structure $\mathscr{S}$.

The prototype of the study on preserver problems dates back to 1897, the Frobenius theorem: A linear determinant preserver on a matrix algebra $M_n (\mathbb{C})$ is either of the forms $A \mapsto QAP$ or $A \mapsto QA^TP$, where $P$ and $Q$ are regular matrices such that $\det (PQ)=1$. Roughly speaking, up to transpose, the matrix equivalence is controlled by the structure of determinant to some extent. If, further, a linear mapping preserves eigenvalues of matrices with multiplicity, then $Q$ can be chosen $P^{-1}$; see~\cite[Theorem 3]{MM59}. Here, it is worth noting that the structure of eigenvalues with multiplicity contains more information than that of determinant, since $\det (A)$ is the product of the eigenvalues of $A$ with multiplicity. Up to now, there are many interesting formulations and results on preserver problems concerning algebraic structures in matrix algebras, or more generally, in $C^*$-algebras. We refer to Hou and Petek-\v Semrl  \cite{Hou,Sem-Pet} for the structure of preservers of rank-one operators, which is one of the important reduction techniques in preserver problems.  Of similarly importance is reduction to singularity preservers~\cite{Dieudonne1949}  or   invertibility preservers; following the idea by Bre\v sar-\v Semrl~\cite{BresarSemrl1998} of using projections as an  attempt towards their classification, Aupetit~\cite{Aupetit2000} succeeded to show that unital linear bijections which preserve the invertibility  on  a (general)  von-Neumann  algebra, are Jordan maps.
Another powerful  technique in invertible linear preserver problems  on matrices is using overgroups. One can often easily find  the natural candidates which  preserve~$\mathscr{S}$ and it remains to check in the list which of their overgroups  still preserves~$\mathscr{S}$; for more on this   see {\DJ}okovi\' c, Li, and Platonov~\cite{DjokovicPlatonov1993,DjokovicLi1994,PlatonovDjokovic1995}. We refer also to Li-Pierce~\cite{LiPierce2001} for a brief overview of general techniques.   A different  approach is  using Hermitian operators (in a sense of Vidav-Palmer); this was used by Sourour~\cite{Sou} to determine that $X\mapsto UXV^\ast$ and $X\mapsto UX^tV^\ast$ ($U,V$ unitary) are all the isometries of  minimal normed ideals of $\cB(\ell_2)$, the algebra of bounded operators on a separable Hilbert space. One can also successfully use ring-theoretical techniques in solving (linear) preserver problems; we mention here the theory of functional identities. This was pioneered by Bre\v sar and its early success culminated in a solution~\cite{Bresar} of Herstein conjectures that  classify commutativity preserving maps on prime rings essentially as Jordan maps. Later it was fruitfully used by Alaminos, Bre\v sar, and  Villena in investigating Lie/Jordan derivations on von-Neumann algebras~\cite{Alaminos2004}.
General (unital) $C^*$-algebras lack projections or minimal elements, but one can exploit other properties in solving preserver problems. We mention here  Hatori and Moln\'ar's~\cite{HatoriMolnar2014} classification of isometries   of Thompson metric $\|\ln(a^{-1/2}ba^{-1/2})\|$ on a positive cone, from which one can further determine the structure of  positive homogeneous preservers isomorphisms of L\"owner order (we refer to \cite[Theorem~13]{Molnar2019} for a precise statement). By reducing to order-monotone maps,  preservers of various means on a positive cone of a general $C^*$-algebra were classified in~\cite{Dong2022}; see also~\cite{Molnar2022}.  The books by Bre\v{s}ar et al.~\cite{BCM07} and by Moln\'{a}r~\cite{Mol07} as well as the surveys by Bourhim and Mashreghi~\cite{BM15} and by Li and Tsing~\cite{LT92} exhibit plenty of interesting preserver problems on algebraic structures and well-developed techniques thereon.

Another important direction of preserver problems is formulated in the normed space settings by using norm-metrics. The simplest (but often highly nontrivial) one is a problem of determining the form of isometric isomorphisms between certain normed spaces. For example, the Banach-Stone theorem states that if $T\colon C_0(K) \to C_0(L)$ is an isometric isomorphism, then there exists a unimodular continuous function $u$ on $L$ and a homeomorphism $\varphi \colon K \to L$ such that $Tf=u(f \circ \varphi^{-1})$ for each $f \in C_0(K)$. In particular, the mapping $f \mapsto f \circ \varphi^{-1}$ preserves multiplication (and complex conjugation in the complex case). This means that the Banach space structure of $C_0(K)$ controls its whole structure as a Banach (or a $C^*$-) algebra. Moreover, the Banach-Stone theorem has the following generalization: Let $\A$ and $\B$ be $C^*$-algebras, and let $T\colon\A \to \B$ be an isometric isomorphism. Then there exists a unitary element $u \in \mathcal{M}(\B)$ and a Jordan $*$-isomorphism $J\colon\A \to \B$ such that $Tx=uJx$ for each $x \in \A$, where $\mathcal{M}(\B)$ is the multiplier algebra of $\B$; see Kadison~\cite{Kad51} for the unital case, and Paterson and Sinclair~\cite{PS72} for the non-unital case. Therefore, the Banach space structure of a $C^*$-algebra can still control its Jordan $*$-algebraic structure even if the algebra is non-commutative. Meanwhile, it is well-known as the Mazur-Ulam theorem that every (possibly nonlinear) surjective isometry between normed spaces is affine over the real numbers. Namely, as an affine mapping is a translation of a real-linear mapping, normed spaces are isometrically isomorphic as metric spaces if and only if they are isometrically isomorphic as real normed spaces. Hence, the (nonlinear) metric structure of a normed space solely determines its structure as a real normed space. The combination of the Kadison-Paterson-Sinclair theorem and the Mazur-Ulam theorem suggests a classification of $C^*$-algebras based on substructures of the metric structure.

Potential candidates are geometric structures determined by the shape of the unit ball of a normed space. The present paper focuses on the orthogonal structure with respect to (a strengthened version of) the Birkhoff-James (shortly, BJ) orthogonality which is a generalized orthogonality relation in normed spaces introduced by Birkhoff~\cite{Bir35}.

%%%%%%%%%%%%%%%%%%%%%%%%%%%%%%%%%%%%%%%%%%%%%%%%%%%%%%%%%%
\begin{definition}

Let $X$ be a normed space over $\mathbb{K}$, and let $x,y \in X$. Then, $x$ is said to be \emph{Birkhoff-James orthogonal} to $y$, denoted by $x \perp_{BJ} y$ if $\|x+\lambda y\| \geq \|x\|$ for each $\lambda \in \mathbb{K}$.
\end{definition}
%%%%%%%%%%%%%%%%%%%%%%%%%%%%%%%%%%%%%%%%%%%%%%%%%%%%%%%%%%

Basic properties of the BJ orthogonality were mainly discovered by James~\cite{Jam47a,James}. In common with the usual inner product orthogonality $\perp$, it is known that $\perp_{BJ}$ is non-degenerate and homogeneous, that is, $x \perp_{BJ}x$ implies $x=0$, and $x \perp_{BJ}y$ implies $\alpha x \perp_{BJ} \beta y$ for all scalars $\alpha$ and $\beta$. Meanwhile, one of the biggest difference between the usual orthogonality and BJ orthogonality is found in symmetry. Namely, if $\dim X \geq 3$, and if $x \perp_{BJ}y$ implies $y \perp_{BJ}x$, then $X$ is an inner product space. This result was shown independently by Day~\cite{Day47} and James~\cite{Jam47a}, originally for real normed spaces, based on a characterization of inner product spaces given by Kakutani~\cite{Kak39}. It can be also shown for complex normed spaces with the help of Bohnenblust~\cite{Boh42}; see also~\cite[Section 4]{Tan22a}. The readers interested in the BJ orthogonality are referred to comprehensive surveys~\cite{AMW12,AMW22}.

A linear preserver problem of the BJ orthogonality relation was considered by Koldobsky~\cite{Kol93} for the real case and Blanco and Turn\v{s}ek~\cite{BT06} for the general case, who came to the best conclusion. To be precise, if a linear mapping between normed spaces preserves the BJ orthogonality in one direction, then it is a scalar multiple of an isometry. This indicates that the BJ orthogonality structure contains much information. Indeed, it was shown in \cite{AGKRZ23} that every (possibly nonlinear) bijection between smooth normed spaces preserving the BJ orthogonality in both direction is essentially induced by an isometric isomorphism except for the two-dimensional case; see also \cite{IT22,Tan22c}. Moreover, it turned out that the commutativity of a $C^*$-algebra can be characterized in terms of the BJ orthogonality, and that a bijection between abelian $C^*$-algebras preserving the BJ orthogonality in both direction induces a $*$-isomorphism; see \cite{Tan22b,Tan23c}. Here, it should be emphasized that $C^*$-algebras are far from smooth except for $\mathbb{C}$. In this direction, recently, the study on nonlinear preserver problems on the BJ orthogonality for non-commutative $C^*$-algebras has also begun in \cite{KSi25a,KSi25b,KSt26}.

The Kadison-Paterson-Sinclair theorem reveals that the Jordan $*$-algebraic structure of a $C^*$-algebra obeys its Banach space structure, and the Koldobsky-Blanco-Turn\v{s}ek theorem ensures that the combination of linear and BJ orthogonal structures controls the whole Banach space structure. This simultaneously explains the robustness and limit of the BJ orthogonal structure of a $C^*$-algebra. In particular, to obtain the complete information of full $C^*$-algebraic structure, the BJ orthogonal structure is not enough. To overcome this difficulty, in fact, we already have an idea of fitting the BJ orthogonality to the algebraic structure of $C^*$-algebras. It is actually given for Hilbert $C^*$-modules in full generality.

For Hilbert $C^*$-modules (refer to Lance's standard textbook book~\cite{Lance1995}) one can accommodate the definition of BJ orthogonality to better fit the module structure. Instead of requiring that in a one-dimensional space $\CC y$   the origin comes closest to a given vector $x$, we require that this holds for a $C^*$-submodule generated by $y$. This was done in \cite{AR14} and is known henceforth as \emph{strong Birkhoff-James {\rm (shortly, strong BJ)}  orthogonality}. We will give its formal definition only for  $C^*$-algebras considered   naturally as  $C^*$-modules  over themselves, since this is the prime interest of our study.

%%%%%%%%%%%%%%%%%%%%%%%%%%%%%%%%%%%%%%%%%%%%%%%%%%%%%%%%%%
\begin{definition}\label{def:sBJ}
Let $\A$ be a $C^*$-algebra, and let $x,y \in \A$. Then, $x$ is said to be \emph{strongly  Birkhoff-James orthogonal} to $y$, denoted by $x \perps y$ if $\|x+yz\| \geq \|x\|$ for each $z \in \A$.
\end{definition}
%%%%%%%%%%%%%%%%%%%%%%%%%%%%%%%%%%%%%%%%%%%%%%%%%%%%%%%%%%

%%%%%%%%%%%%%%%%%%%%%%%%%%%%%%%%%%%%%%%%%%%%%%%%%%%%%%%%%%
\begin{remark}\label{rem:strong-implies-usual}
It is obvious that $x \perps y$ if and only if $x \perp_{BJ} yz$ for each $z \in \A$. Considering an approximate unit for $\A$, if necessary, one can show that $x \perps y$ implies $x \perp_{BJ}y$ (see~\cite[ implications in (2.1)]{AR14}).
\end{remark}
%%%%%%%%%%%%%%%%%%%%%%%%%%%%%%%%%%%%%%%%%%%%%%%%%%%%%%%%%%

%%%%%%%%%%%%%%%%%%%%%%%%%%%%%%%%%%%%%%%%%%%%%%%%%%%%%%%%%%
\begin{remark}\label{rem:subalgebra}
   Let $\A\subseteq\B$ be two $C^\ast$-algebras and $x,y\in\A$. Then, $x\perps y$ in $\A$ if and only if $x\perps y$ in  $\B$ (see \cite[Lemma~1.1]{KS23} or~\cite[Proposition 3.1]{AGKRTZ24} for an even stronger result).
\end{remark}
%%%%%%%%%%%%%%%%%%%%%%%%%%%%%%%%%%%%%%%%%%%%%%%%%%%%%%%%%%

As in the case of BJ orthogonality, the strong BJ orthogonality is non-degenerate and homogeneous, but not symmetric in general. Indeed, the strong BJ orthogonality is symmetric in a $C^*$-algebra $\A$ if and only if $\dim \A \leq 1$; see \cite[Corollary 2.7]{AR16}. Linear preserver problems on the strong BJ orthogonality were first studied by Arambasi\'{c} and Raji\'{c}~\cite{AR15}. It turned out that every linear bijection on $\cB(\cH)$ preserving the strong BJ orthogonality in both directions is a scalar multiple of a $*$-automorphism followed by a unitary element from the left. Later, in \cite[Theorem 5.7]{KST18}, this result was generalized to linear bijection between von Neumann algebras preserving the strong BJ orthogonality in both directions, which is currently the best in the existing results on the general linear case. In the context of nonlinear preservers, it is known that a bijection between $C_0(K)$ and $C_0(L)$ preserving the strong BJ orthogonality in both directions induces a homeomorphism between $K$ and $L$; see~\cite[Theorems 5.2 and 5.5]{TanakaContinuous}. Moreover, in \cite[Corollary 3.14]{AGKRTZ24}, it was shown that two compact $C^*$-algebras are $*$-isomorphic if (and only if) they admit a bijection preserving the strong BJ orthogonality in both directions. This is stated as a nonlinear classification result and does not contain any explicit description of preservers. In any (known) case, the strong BJ orthogonality structure distinguishes the full $C^*$-algebraic structure.

The present paper is devoted to preservers of strong BJ orthogonality between $C^*$-algebras which we assign the following term.
%%%%%%%%%%%%%%%%%%%%%%%%%%%%%%%%%%%%%%%%%%%%%%%%%%%%%%%%%%
\begin{definition}

Let $\A$ and $\B$ be $C^*$-algebras, and let $\Phi :\A \to \B$. Then $\Phi$ is called a \emph{strong BJ isomorphism} if it is bijective and $x \perp_{BJ}^s y$ is equivalent to $\Phi (x) \perp_{BJ}^s \Phi(y)$.
\end{definition}
%%%%%%%%%%%%%%%%%%%%%%%%%%%%%%%%%%%%%%%%%%%%%%%%%%%%%%%%%%
The main purpose of the present paper is twofold. One is to describe all linear strong BJ isomorphisms between unital $C^*$-algebras, which drastically improves the corresponding result for von Neumann algebras. As our method requires the existence of the unit for at least one of the $C^*$-algebras under consideration, the general (non-unital) case remains open. Another purpose is to partially complement  the general case by considering compact $C^*$-algebras.  We  begin by giving a  complete classification of their nonlinear strong BJ isomorphisms. We further   characterize  which $C^*$-algebras are compact $C^*$-algebras completely in terms of the strong BJ orthogonality structure.
This also substantially improves the existing results. Moreover, based on the nonlinear theory, all linear strong BJ isomorphisms between compact $C^*$-algebras are determined.   A fundamental observation, valid in both the unital and compact settings, is that every linear isometry which is also a strong BJ isomorphism arises from a \( * \)-isomorphism. This essentially follows from the decomposition theorem for Jordan \( * \)-isomorphisms between the (enveloping) von Neumann algebras into the direct sum of \( * \)-isomorphisms and \( * \)-anti-isomorphisms; see Kadison~\cite{Kad51} and St{\o}rmer~\cite{Stormer1965Jordan}. Consequently, the main problem in the classification of linear strong BJ isomorphisms reduces to determining whether they are isometric. In  the concluding remarks, a detailed description of linear strong BJ isomorphism on a compact $C^*$-algebra from the viewpoint of operator-theory is given.

%%%%%%%%%%%%%%%%%%%%%%%%%%%%%%%%%%%%%%%%%%%%%%%%%%%%%%%%%%
%%%%%%%%%%%%%%%%%%%%%%%%%%%%%%%%%%%%%%%%%%%%%%%%%%%%%%%%%%

%%%%%%%%%%%%%%%%%%%%%%%%%%%%%%%%%%%%%%%%%%%%%%%%%%%%%%%%%%
%%%%%%%%%%%%%%%%%%%%%%%%%%%%%%%%%%%%%%%%%%%%%%%%%%%%%%%%%%
\section{Preliminaries}

We follow the standard terminology in the theory of $C^*$-algebras; see, for example, \cite{Bla06,KR97a,KR97b,Murphy1990,Sak98}. Let $\A$ be a $C^*$-algebra. Then the symbols $\mathcal{S}(\A)$ and $\mathcal{P}(\A)$ stand for the set of all states and pure states of $\A$. The multiplicative unit for a unital $C^*$-algebra is denoted by $\1$. For notational convenience, the symbol~$\1$ will also be employed in non-unital algebras, in situations where its occurrence harmlessly  abbreviates expressions that do not presuppose the existence of a unit, such as such as $x(\1-y)^2x=x^2-2xyx+xy^2x$ or $e^x-\1=\sum_1^\infty x^n/n!$, etc. If $\A$ is non-unital, then the spectrum of an element of $\A$ is considered in the unitization $\widetilde{\A}$ of $\A$. It is known that each element of $\mathcal{S}(\A)$ (or $\mathcal{P}(\A)$) extends uniquely to an element of $\mathcal{S}(\widetilde{\A})$ (or $\mathcal{P}(\widetilde{\A})$). As usual, we always consider a non-unital $C^*$-algebra $\A$ as a closed two-sided ideal of $\widetilde{\A}$, and a (pure) state of $\A$ as a restriction of that of $\widetilde{\A}$. Regarding this convention, the only problem occurs in functional calculus of a self-adjoint element $x\in \A$, because the element $f(x)$ corresponding to $f \in C(\Sp (x))$ belongs to $\widetilde{\A}$ in general. However, we know that $f(x) \in \A$ whenever $f(0)=0$; for such an $f$ can be approximated by polynomials without constant terms. Let $|x|=(x^*x)^{1/2}$ for each $x \in \A$. Since $0^{1/2}=0$, we obtain $|x| \in \A$ regardless of whether $\A$ is unital or not.

For a (complex) Hilbert space $\cH$, let $\cB(\cH)$ and $\cK (\cH)$ denote the $C^*$-algebras of all bounded (linear) operators and compact operators on $\cH$. It is well-known that $\cK (\cH)$ is a closed two-sided ideal of $\cB (\cH)$. A $C^*$-subalgebra $\A$ of $\cB (\cH)$ is called a \emph{concrete} $C^*$-algebra; in which case, $\A$ is said to act on $\cH$. An element of a concrete $C^*$-algebra is often denoted by upper case letters as it is a bounded linear operator. By the Gelfand-Naimark theorem, every $C^*$-algebra is $*$-isomorphic to a concrete $C^*$-algebra.

A concrete $C^*$-algebra closed under the weak-operator topology is called a \emph{von Neumann algebra}. Recall that a net $(A_\lambda)_\lambda$ in $\cB (\cH)$ converges to $A \in \cB (\cH)$ with respect to the weak-operator topology if and only if $\lim_\lambda \langle A_\lambda \xi,\zeta \rangle = \langle A\xi,\zeta\rangle$ for each $\xi,\zeta \in \cH$; in which case, for any $B,C \in \cB (\cH)$, the net $(BA_\lambda C)_\lambda$ also converges to $BAC$ with respect to the weak-operator topology. Let $X$ be a subset of $\cB (\cH)$. Then the \emph{commutant} of $X$ is defined by $X' = \{ A \in \cB (\cH) : \text{$AB=BA$ for each $B \in X$}\}$. If $X$ is self-adjoint, that is, if $A \in X$ implies $A^* \in X$, then $X'$ is a von Neumann algebra acting on $\cH$.

A $C^*$-algebra $\A$ is said to be \emph{compact} if it is $*$-isomorphic to a $C^*$-subalgebra of $\cK (\cH)$. The definition adopted here is based on that of ``$C^*$-algebras of compact operators'' discussed in~\cite[Section 1.4]{Arveson}, although the term ``compact'' comes from \cite[Definition 3.1]{Ale68}. These two definitions are formally different from, but for $C^*$-algebras the same as, each other. This fact follows from the combination of \cite[Theorem 3.2, Corollary 3.3 and Lemma 3.4 (i)]{Ale68} and the structure theorem for compact $C^*$-algebras found in \cite[Theorem 8.2]{Ale68} and \cite[Theorem 1.4.5]{Arveson}.

\subsection{Approximate invertibility}
Approximately right invertible elements were  introduced in~\cite{Esmeral2023} and  play a major role in the context of nonunital $C^*$-algebra (in unital $C^*$-algebra, they are equivalent to right invertible elements; see  Proposition 2.7 and  the subsequent paragraph of~\cite{Esmeral2023}).

\begin{definition} Let $A$ be a $C^*$-algebra and let $a\in A$. We say that $a$ is approximately right invertible if there is a net $\{b_i\}_{i\in I}$ such that $ab_i$ is approximate unit for $A$.
\end{definition}

Characterization of approximately right invertible elements was given in \cite[Proposition 2.2]{KS23}.

\begin{proposition}[\cite{KS23}]\label{app.r.inv} Let $\A$ be   a (possibly nonunital) $C^*$-algebra. The following are equivalent for  $a\in \A$:
\begin{itemize}
    \item[(i)] $a$ is approximately right invertible.
    \item[(ii)] $a$ does not belong to any right modular ideal.
    \item[(iii)] For any pure state $\rho$ we have $\rho(aa^*)>0$.
\end{itemize}
\end{proposition}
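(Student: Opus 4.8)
The plan is to prove the two equivalences $(i)\Leftrightarrow(iii)$ and $(ii)\Leftrightarrow(iii)$; in (ii) I read ``right modular ideal'' as a \emph{proper} one, since $\A$ itself is trivially modular. The positive element $h:=aa^*$ is the natural quantity to track throughout, and most of the work concentrates on producing, from a condition on states, an explicit approximate right inverse built by functional calculus of $h$.

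For $(i)\Rightarrow(iii)$, suppose $ab_i$ is an approximate unit and fix a pure state $\rho$, extended to $\widetilde{\A}$. Arguing by contradiction, assume $\rho(aa^*)=0$. Then for every $x\in\A$ one has $\rho\big((ax)(ax)^*\big)=\rho\big(a(xx^*)a^*\big)\le\|xx^*\|\,\rho(aa^*)=0$, using $xx^*\le\|xx^*\|\1$; in particular $\rho\big((ab_i)(ab_i)^*\big)=0$. The Cauchy--Schwarz inequality for the positive semidefinite form $(x,y)\mapsto\rho(xy^*)$ then gives $|\rho(ab_i)|^2\le\rho\big((ab_i)(ab_i)^*\big)\rho(\1)=0$, so $\rho(ab_i)=0$ for all $i$. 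On the other hand, passing to the GNS triple $(\pi_\rho,\cH_\rho,\xi_\rho)$ one sees $\pi_\rho(ab_i)\xi_\rho\to\xi_\rho$, whence $\rho(ab_i)\to\|\xi_\rho\|^2=1$. This contradiction proves $(iii)$.

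The converse $(iii)\Rightarrow(i)$ is where I expect the main obstacle: $(iii)$ controls only \emph{pure} states, while approximate right invertibility should follow from control over \emph{all} states, and in the non-unital case the state space need not be weak$^*$-compact. I would bridge this gap on the weak$^*$-compact quasi-state space $Q(\A)=\{\varphi\in\A^*:\varphi\ge0,\ \|\varphi\|\le1\}$, whose extreme points are exactly $\mathcal{P}(\A)\cup\{0\}$. Were there a state $\varphi_0$ with $\varphi_0(aa^*)=0$, the set $F=\{\varphi\in Q(\A):\varphi(aa^*)=0\}$ would be a nonzero, weak$^*$-compact, convex \emph{face} of $Q(\A)$ (a face because $aa^*\ge0$), so by Krein--Milman it would admit a nonzero extreme point, necessarily a pure state $\rho$ with $\rho(aa^*)=0$ --- contradicting $(iii)$. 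Hence $\varphi(aa^*)>0$ for every state, i.e.\ $aa^*$ is strictly positive (in particular $\A$ is $\sigma$-unital). Setting $b_\varepsilon:=a^*(aa^*+\varepsilon)^{-1}\in\A$, the element $ab_\varepsilon=aa^*(aa^*+\varepsilon)^{-1}=\1-\varepsilon(aa^*+\varepsilon)^{-1}$ is positive, increases as $\varepsilon\downarrow0$, and, since strict positivity gives $\|\varepsilon(aa^*+\varepsilon)^{-1}x\|\to0$ for all $x$, forms an approximate unit; thus $a$ is approximately right invertible.

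Finally, for $(ii)\Leftrightarrow(iii)$ I would invoke the standard duality between pure states and maximal modular right ideals. For a pure state $\rho$, the set $R_\rho=\{x\in\A:\rho(xx^*)=0\}$ is a maximal modular right ideal --- it is the adjoint of the maximal modular left ideal $N_\rho=\{x:\rho(x^*x)=0\}$ attached to $\rho$ --- and conversely every maximal modular right ideal arises this way. Since $a\in R_\rho$ precisely when $\rho(aa^*)=0$, the failure of $(iii)$ places $a$ in some $R_\rho$, so $(ii)$ fails. Conversely, if $a$ lies in a proper modular right ideal $J$ with left unit $u$ modulo $J$, then a Zorn's lemma argument --- every proper right ideal containing $J$ still admits $u$ as a left unit modulo it and hence avoids $u$, and such ideals are stable under unions of chains --- extends $J$ to a maximal modular right ideal $R_\rho$; then $a\in R_\rho$ forces $\rho(aa^*)=0$, so $(iii)$ fails. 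This closes all the equivalences.
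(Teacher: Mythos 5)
The paper contains no proof of Proposition~\ref{app.r.inv}: it is imported verbatim from \cite[Proposition 2.2]{KS23}, so there is no in-paper argument to compare you against, and your proposal must be judged on its own terms. On those terms it is correct, and its architecture is the natural one: condition (iii) is exactly the classical characterization of $aa^*$ being a strictly positive element, approximate right invertibility is strict positivity of $aa^*$ made explicit via $b_\varepsilon=a^*(aa^*+\varepsilon)^{-1}$ (which lies in $\A$ because $\A$ is an ideal of $\widetilde{\A}$), and the equivalence with (ii) is the classical bijection between pure states and maximal modular left ideals, transported to right ideals by $J\mapsto J^*$. Your face/Krein--Milman argument on the quasi-state space $Q(\A)$ correctly bridges the pure-state hypothesis to all states, which is precisely the delicate point in the non-unital setting, and your reading of (ii) as concerning \emph{proper} modular right ideals is the intended one, with a complete Zorn argument for the extension to a maximal modular right ideal.

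Two steps lean on standard facts that you leave implicit and should cite. First, in (i)$\Rightarrow$(iii), the limit $\pi_\rho(ab_i)\xi_\rho\to\xi_\rho$ uses, besides cyclicity of $\xi_\rho$, that the net $(ab_i)$ is norm-bounded: only then does convergence on the dense subspace $\pi_\rho(\A)\xi_\rho$ pass to $\xi_\rho$. Boundedness is part of the usual convention on approximate units, but it is doing real work here. Second, in (iii)$\Rightarrow$(i), the assertion $\|\varepsilon(aa^*+\varepsilon)^{-1}x\|\to 0$ for all $x\in\A$ is not immediate from the definition of strict positivity; it rests on the Aarnes--Kadison/Pedersen theorem that a strictly positive $h$ satisfies $\overline{h\A}=\A$, after which $\|u_\varepsilon h-h\|\le\varepsilon$ for $u_\varepsilon=h(h+\varepsilon)^{-1}$ together with $\|u_\varepsilon\|\le 1$ upgrades to convergence on all of $\A$ (and self-adjointness of $u_\varepsilon$ gives the two-sided property). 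With those two citations supplied, the proof is complete.
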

We already mentioned that on unital $C^*$-algebras approximate and  usual right invertibility coincide. However, this does not translate to unitizations. Consider, e.g.,  a nonunital abelian $C^*$-algebra $\A=C_0(X)$ of all  complex-valued continuous functions  on $X=[0,1)$ that vanish at $1$. Here, pure states are point evaluations so, by the above classification, $f\in C_0(X)$ is approximately right invertible if and only if it has no zeros.  However, $f$, embedded in unitization~$\widetilde{\A}\simeq C([0,1])$, cannot be right invertible, because $f$ must vanish at $1$. Similarly, on $\A:=\cK(\cH)$, the $C^\ast$-algebra of all compact operators on some Hilbert space, pure states are vector states so $a\in\A$ is approximately right invertible if and only if its range is dense. In particular, the classical Volterra operator \( (Vf)(x)=\int_0^x f(t)\,dt \) is approximately right invertible, but clearly not right invertible in $\cB(\cH)$.
\subsection{Norm-classification of  strong BJ orthogonality}

\begin{lemma}\label{sBJ-char}

Let $\A$ be a $C^*$-algebra, and let $x,y \in \A$. Then, $x\perps  y$ if and only if $\|\|y\|^2x-yy^*x\|=\|x\|\|y\|^2$.
\end{lemma}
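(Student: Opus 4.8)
The plan is to strip the statement down to a single scalar equality and then prove the two implications by quite different mechanisms, a one-line substitution for the forward direction and a state-theoretic argument for the converse. First I would dispose of the trivial case $y=0$, where $x \perps y$ holds automatically and both sides of the displayed identity vanish. Assuming $y \neq 0$, set $c = \|y\|^{-2}yy^*$ and $p = \1 - c$; since $\|yy^*\| = \|y\|^2$ the element $c$ is positive with $0 \le c \le \1$ (computed in $\widetilde{\A}$), so $p$ is a positive contraction. Because $\|y\|^2 x - yy^* x = \|y\|^2 px$, dividing the claimed identity by $\|y\|^2$ shows that it is equivalent to the single equation $\|px\| = \|x\|$. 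Thus the lemma reduces to: $x \perps y$ if and only if $\|px\| = \|x\|$.

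For the forward implication I would substitute the specific element $z_0 = -\|y\|^{-2}y^* x \in \A$ into the defining inequality of strong BJ orthogonality. Since $x + yz_0 = x - \|y\|^{-2}yy^* x = px$, the relation $x \perps y$ forces $\|px\| \ge \|x\|$; combined with $\|px\| \le \|p\|\,\|x\| \le \|x\|$ this yields $\|px\| = \|x\|$, hence the equality after multiplying by $\|y\|^2$.

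The substantive direction is the converse, and here the plan is to extract a state from the extremal norm condition. As $x^*p^2 x = (px)^*(px) \ge 0$ has norm $\|px\|^2 = \|x\|^2$, I would choose a state $\varphi \in \mathcal{S}(\A)$ with $\varphi(x^*p^2 x) = \|x\|^2$ (extending $\varphi$ to $\widetilde{\A}$ where order comparisons are needed). The key squeeze uses $p^2 \le p \le \1$, valid because $p - p^2 = pc = c(\1 - c) \ge 0$: conjugating by $x$ and applying $\varphi$ gives $\|x\|^2 = \varphi(x^*p^2 x) \le \varphi(x^*p x) \le \varphi(x^*x) \le \|x\|^2$, so all four terms coincide and $\varphi(x^*cx)=0$, i.e. $\varphi(x^*yy^*x)=0$. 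The Cauchy--Schwarz inequality for the positive functional $\varphi$, applied to $x^*yz = (y^*x)^* z$, then forces $|\varphi(x^*yz)|^2 \le \varphi(x^*yy^*x)\varphi(z^*z) = 0$, so $\varphi(x^*yz)=0$ for every $z \in \A$. Expanding $\varphi\big((x+yz)^*(x+yz)\big) = \varphi(x^*x) + 2\re\varphi(x^*yz) + \varphi(z^*y^*yz) = \|x\|^2 + \varphi(z^*y^*yz) \ge \|x\|^2$ gives $\|x+yz\|^2 \ge \varphi\big((x+yz)^*(x+yz)\big) \ge \|x\|^2$ for all $z$, which is precisely $x \perps y$.

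I expect the only delicate point to be the bookkeeping with states in the non-unital setting, namely verifying that the maximizing state exists as an honest state of $\A$ and that the order relations $p^2 \le p \le \1$, which live in $\widetilde{\A}$, may be legitimately evaluated against $\varphi$; both are routine. The genuine idea, and the crux of the proof, is the passage from the extremal equality $\|px\| = \|x\|$ to a state annihilating $x^*yy^*x$, after which Cauchy--Schwarz closes the argument.
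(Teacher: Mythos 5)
Your proof is correct and follows essentially the same route as the paper's: the same reduction to $\|px\|=\|x\|$ with $p=\1-\|y\|^{-2}yy^*$, the same substitution $z_0=-\|y\|^{-2}y^*x$ for the forward direction, and the same converse argument extracting a state $\varphi$ attaining $\varphi(x^*p^2x)=\|x\|^2$ and squeezing via $x^*p^2x \le x^*px \le x^*x \le \|x\|^2\1$ to get $\varphi(x^*yy^*x)=0$. The only deviation is at the very end: where the paper cites \cite[Theorem 2.5]{AR14} to pass from the conditions $\varphi(x^*x)=\|x\|^2$ and $\varphi(x^*yy^*x)=0$ to $x\perps y$, you prove that implication inline via Cauchy--Schwarz (yielding $\varphi(x^*yz)=0$ for all $z$) and the expansion of $\varphi\bigl((x+yz)^*(x+yz)\bigr)$, which makes your argument self-contained at no extra cost.
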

\begin{proof}
It suffices to consider the case $x,y \in \A \setminus \{0\}$. We note first that, in unitization~$\widetilde{\A}$ of~$\A$, $0 \leq \1-\|y\|^{-2}yy^* \leq \1$ which, in~$\A$, gives
 \begin{equation}\label{eq:two-ineqs}
x^* (\1 -\|y\|^{-2}yy^*)^2x \leq x^*(\1-\|y\|^{-2}yy^*)x \leq x^*x \leq \|x\|^2\1 ,
\end{equation}
and then implies that $\|(\1-\|y\|^{-2}yy^*)x\| \leq \|x\|$ or equivalently,
\begin{equation}\label{eq:inequality}
    \|\|y\|^2x-yy^*x\| \leq \|x\|\|y\|^2.
\end{equation}
If $x\perps  y$, then $x \perp_{BJ} y(y^*x)$ or
$\|x-\|y\|^{-2}yy^*x\| \geq \|x\|$. Hence, by~\eqref{eq:inequality}, $\|\|y\|^2x-yy^*x\| = \|x\|\|y\|^2$.

Conversely, suppose that $\|\|y\|^2x-yy^*x\| = \|x\|\|y\|^2$. Let $\rho \in \mathcal{S}(\A)$ be such that
\[
\rho (x^* (\1 -\|y\|^{-2}yy^*)^2x) = \|x^* (\1 -\|y\|^{-2}yy^*)^2x\| = \|x\|^2 .
\]
By~\eqref{eq:two-ineqs} it follows that $\rho (x^*(\1 -\|y\|^{-2}yy^*)x) = \rho (x^*x) =\|x\|^2$. In particular, we obtain
\[
\rho (x^*yy^*x ) = \|y\|\rho ( x^*x) - \|y\|\rho (x^*(\1 -\|y\|^{-1}yy^*)x) = 0 .
\]
Combining this with \cite[Theorem 2.5]{AR14}, we infer that $x\perps  y$.
\end{proof}

 \begin{lemma}\label{lem:reduction}
Let $\A$ be a  $C^*$-algebra,  $\rho \in \mathcal{S}(\A)$, and  $x \in \A$, $x\ge0$.  If $\rho (x)=\|x\|$, then $\rho (yx)=\rho(xy)=\rho(xyx)=\|x\|\rho (y) $ for each $y \in \A$. Meanwhile, if $\rho (x)=0$, then $\rho (y)=\rho ((\1-x)y(\1-x))$ for each $y \in \A$.
\end{lemma}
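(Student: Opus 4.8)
The whole statement rests on a single mechanism: the Cauchy--Schwarz inequality $|\rho(a^*b)|^2\le\rho(a^*a)\,\rho(b^*b)$, valid for every state, together with the observation that if $p\in\widetilde{\A}$ satisfies $p\ge0$ and $\rho(p)=0$, then $\rho(pa)=\rho(ap)=0$ for every $a$. Indeed, factoring $pa=p^{1/2}(p^{1/2}a)$ and applying Cauchy--Schwarz gives $|\rho(pa)|^2\le\rho(p)\,\rho(a^*pa)=0$, while $\rho(ap)=\overline{\rho(pa^*)}=0$ follows by self-adjointness of $p$; iterating once more, $\rho(pap)=\rho(p\,(ap))=0$ as well. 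The plan is to produce, in each of the two cases, a suitable positive element annihilated by $\rho$ and then to read off the asserted identities by expanding products.

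For the first part I would pass to the unitization $\widetilde{\A}$, where $\rho$ extends uniquely to a state with $\rho(\1)=1$. Since $0\le x\le\|x\|\1$, the element $p:=\|x\|\1-x$ is positive with $\rho(p)=\|x\|-\rho(x)=0$. The mechanism above gives $\rho(py)=\rho(yp)=0$, which unravel at once to $\rho(xy)=\|x\|\rho(y)$ and $\rho(yx)=\|x\|\rho(y)$. For the two-sided term I would substitute $x=\|x\|\1-p$ and expand $xyx=\|x\|^2y-\|x\|(py+yp)+pyp$; as $\rho$ kills each of $py$, $yp$, $pyp$, only the first summand survives and $\rho(xyx)=\|x\|^2\rho(y)$ (which is the asserted $\|x\|\rho(y)$ precisely when $\|x\|=1$).

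The second part is the same computation with a more direct choice of the annihilated element: here $x$ is itself positive with $\rho(x)=0$, so the mechanism applies verbatim to $p:=x$ and yields $\rho(xy)=\rho(yx)=\rho(xyx)=0$ for every $y$. Expanding $(\1-x)y(\1-x)=y-xy-yx+xyx$ and applying $\rho$ then leaves exactly $\rho((\1-x)y(\1-x))=\rho(y)$.

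I expect the only genuine care to be bookkeeping in the non-unital case: the elements $\1$, $p=\|x\|\1-x$ and $p^{1/2}$ live in $\widetilde{\A}$ rather than in $\A$, so the argument must be run for the extended state on $\widetilde{\A}$. This is harmless, since (as recalled in the Preliminaries) every $\rho\in\mathcal{S}(\A)$ extends uniquely to $\mathcal{S}(\widetilde{\A})$ and Cauchy--Schwarz persists there, with no positivity or norm data lost in the passage. Beyond this, there is no real obstacle---the entire content is the one-line Cauchy--Schwarz annihilation fact, invoked twice.
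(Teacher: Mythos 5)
Your proof is correct and runs on the same engine as the paper's: a Cauchy--Schwarz annihilation argument for the state extended to $\widetilde{\A}$. The differences are minor but worth recording. The paper normalizes $\|x\|=1$, first proves $\rho((\1-x)^2)=0$ (via $0\le x^2\le x\le\1$ and $1=\rho(x)\le\rho(x^2)^{1/2}\le 1$), and then applies Cauchy--Schwarz in the form $|\rho((\1-x)y)|\le\rho((\1-x)^2)^{1/2}\rho(y^*y)^{1/2}$; your square-root factorization $|\rho(pa)|^2\le\rho(p)\,\rho(a^*pa)$ goes straight from $\rho(p)=0$ without the intermediate step $\rho(p^2)=0$ and needs no normalization. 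For the second assertion the paper reduces to the first part applied to $\1-x$, which tacitly uses $0\le\1-x$, i.e.\ $\|x\|\le1$ (supplied by the normalization, although the conclusion is not scale-invariant in $x$, so that reduction is slightly loose); your direct application of the mechanism to $p=x$ is cleaner and valid for every positive $x$ with $\rho(x)=0$. Finally, your parenthetical catch is right and is a genuine improvement on the printed statement: the correct general identity for the two-sided term is $\rho(xyx)=\|x\|^2\rho(y)$, so the lemma's $\rho(xyx)=\|x\|\rho(y)$ holds only under the normalization $\|x\|=1$ that the paper's proof silently adopts (and under which the lemma is in fact invoked later, e.g.\ with projections); the first two identities, being homogeneous of degree one in $x$, survive rescaling, which is why only the $xyx$ clause is affected.
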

\begin{proof}
 We may assume $\|x\|=1$. Consider a unique extension of $\rho$ to the state of $\widetilde{\A}$. If $\rho (x)=1$, then since $0\le x^2\le x\le \1$, $1 =\rho (x) \leq \rho(x^2)^{1/2} \leq 1$, which implies that $\rho ((\1-x)^2)=0$. Hence,
\[
|\rho ((\1-x)y)| = |\rho (y^*(\1-x))| \leq \rho ((\1-x)^2)^{1/2}\rho (y^*y)^{1/2} =0
\]
for each $y \in \A$. This proves $\rho (y)=\rho (xy)=\rho (yx)$ and
 $\rho (y) = \rho (xyx)$ for each $y \in \A$.

Now, suppose that $\rho (x)=0$. In this case, we have $0 \leq \1-x$ and $\|\1-x\|=\rho (\1-x)=1$. Therefore, the preceding paragraph shows $\rho (y)=\rho ((\1-x)y(\1-x))$ for each $y \in \A$.
\end{proof}

\subsection{Reduction to positive cones}

%%%%%%%%%%%%%%%%%%%%%%%%%%%%%%%%%%%%%%%%%%%%%%%%%%%%%%%%%%
The  lemma below is known, see \cite[Lemma 1.2]{KS23} or, independently, \cite[Theorem 2.5]{AR14}. However, since it is crucial for us we present its short proof for the sake of convenience.
\begin{lemma}\label{abs-polar}

Let $\A$ be a $C^*$-algebra, and let $x,y \in \A$. Then, the following are equivalent:
 \begin{itemize}
\item[{\rm (i)}] $x\perps  y$.
\item[{\rm (ii)}] $|x^*|\perps  y$.
\item[{\rm (iii)}] $x\perps  |y^*|$.
\item[{\rm (iv)}] $|x^*|\perps  |y^*|$.
\end{itemize}
\end{lemma}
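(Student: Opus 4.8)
The plan is to reduce all four statements to the single norm criterion of Lemma~\ref{sBJ-char} and then read off the claimed symmetries almost algebraically. First I would dispose of the degenerate cases: if $x=0$ then every clause holds because $0\perps z$ always, and likewise if $y=0$ every clause holds because $z\perps 0$ always (note $|0^*|=0$). So from now on I assume $x,y\neq 0$ and set $p=\1-\|y\|^{-2}yy^*\in\widetilde{\A}$, a positive contraction. By Lemma~\ref{sBJ-char}, $x\perps y$ is equivalent to $\|px\|=\|x\|$, and this reformulation is what I would carry through the whole argument.

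For the equivalence (i)$\iff$(iii) I would simply observe that the data entering the criterion depend on $y$ only through $yy^*$ and $\|y\|$. Writing $|y^*|=(yy^*)^{1/2}$, one has $|y^*|\,|y^*|^*=yy^*$ and $\||y^*|\|=\|y\|$, so the positive contraction $p$ attached to $|y^*|$ coincides with the one attached to $y$. Hence the criterion $\|px\|=\|x\|$ for $x\perps y$ is literally identical to the one for $x\perps|y^*|$, giving (i)$\iff$(iii) for all $x,y$. Conceptually this reflects the fact that $y$ and $|y^*|$ generate the same closed right ideal, but the norm criterion lets me avoid that detour.

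For (i)$\iff$(ii) the element $x$ enters the criterion directly rather than through $xx^*$, so here I would invoke the $C^*$-identity: since $p=p^*$,
\[
\|px\|^2=\|(px)(px)^*\|=\|p\,xx^*\,p\|=\|p\,|x^*|^2\,p\|=\|(p|x^*|)(p|x^*|)^*\|=\|p|x^*|\|^2,
\]
while $\|x\|=\||x^*|\|$. Thus $\|px\|=\|x\|$ holds if and only if $\|p|x^*|\|=\||x^*|\|$, which is exactly $|x^*|\perps y$; this yields (i)$\iff$(ii). Finally (iv) follows by composing the two symmetries: applying (i)$\iff$(ii) and then (i)$\iff$(iii) to the pair $(|x^*|,y)$ gives $x\perps y\iff|x^*|\perps y\iff|x^*|\perps|y^*|$. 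The only genuinely computational point is the displayed $C^*$-identity chain, and the only bookkeeping I would be careful about is that $p$ lives in the unitization $\widetilde{\A}$ whereas $px$, $p|x^*|$ and their relevant products all return to $\A$, so every norm in sight is the one used in Lemma~\ref{sBJ-char}.
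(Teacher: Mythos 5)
Your proof is correct, and it takes a genuinely different route from the paper's. The paper proves only the equivalence (i) $\Leftrightarrow$ (iv): it passes to the universal representation, invokes \cite[Proposition 3.1]{AGKRTZ24} to allow all of $\cB(\cH)$ as right multipliers in $\perps$, and then shuttles orthogonality back and forth via the polar decompositions $x=|x^*|u$, $y=|y^*|v$ with partial isometries $u,v\in\cB(\cH)$; the equivalences with (ii) and (iii) then follow by substituting $|x^*|$ for $x$ and $|y^*|$ for $y$ in (i) $\Leftrightarrow$ (iv), using $\bigl||x^*|^*\bigr|=|x^*|$. You instead never leave $\A$ (except for the harmless unitization where $p=\1-\|y\|^{-2}yy^*$ lives) and derive everything from the norm criterion of Lemma~\ref{sBJ-char}: the equivalence (i) $\Leftrightarrow$ (iii) is immediate because the criterion sees the right argument only through $yy^*=|y^*|\,|y^*|^*$ and $\|y\|=\||y^*|\|$, and (i) $\Leftrightarrow$ (ii) follows from the $C^*$-identity chain $\|px\|=\|p|x^*|\|$ together with $\|x\|=\||x^*|\|$; composing the two symmetries gives (iv). There is no circularity, since Lemma~\ref{sBJ-char} precedes this lemma in the paper and its proof does not rely on it. What your approach buys is self-containedness and elementarity --- no representation theory and no external multiplier-enlargement result --- and it makes structurally transparent that $\perps$ depends on its left argument only through $xx^*$ and on its right argument only through $yy^*$; what the paper's argument buys is brevity and the reusable intertwining technique with partial isometries, at the cost of resting on the quoted Proposition 3.1. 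Your bookkeeping is also sound: the degenerate cases $x=0$ and $y=0$ are handled correctly, and since $\A$ is an ideal of $\widetilde{\A}$ the elements $px$ and $p|x^*|$ indeed lie in $\A$, so all norms are the ones appearing in Lemma~\ref{sBJ-char}.
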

\begin{proof}
It is sufficient to prove (i) $\Leftrightarrow$ (iv). Considering the universal representation of $\A$, we may assume that $\A$ acts on a Hilbert space $\cH$. In this case, \cite[Proposition 3.1]{AGKRTZ24} allows us to use all operators on $\cH$ as right multiplier in $\perps$. Namely, $x\perps  y$ in $\A$ if and only if $x \perp_{BJ} yz$ for each $z \in \cB (\cH)$. Now, by polar decomposition, we recall that $x=|x^*|u$, $xu^*=|x^*|$, $y = |y^*|v$ and $yv^*=|y^*|$ for some partial isometries $u,v \in \cB (\cH)$.

Suppose that $x\perps  y$. Then, we have
\[
\||x^*|+|y^*|z\| \geq \|x+yv^*zu\| \geq \|x\|
\]
for each $z \in \cB (\cH)$, that is, $|x^*|\perps  |y^*|$. Conversely, if $|x^*|\perps  |y^*|$, then
\[
\|x+yz\| \geq \||x^*|+|y^*|vzu^* \| \geq \|x\|
\]
for each $z \in \cB (\cH)$. Therefore, $x\perps  y$.
\end{proof}
%%%%%%%%%%%%%%%%%%%%%%%%%%%%%%%%%%%%%%%%%%%%%%%%%%%%%%%%%

%%%%%%%%%%%%%%%%%%%%%%%%%%%%%%%%%%%%%%%%%%%%%%
%%%%%%%%%%%%%%%%%%%%%%%%%%%%%%%%%%%%%%%%%%%%%%%%%%%%%%%%%%

\subsection{Right symmetry}

Recall that $a\in\A$ is right-symmetric if $x\perps a$ implies $a\perps x$. The corresponding notion  of left-symmetricity also  exists, see \cite{GhoshSain2017} or  \cite{PaulMalWojcik2019}, but will not be needed here.  We characterize right symmetric elements via (pure) states, approximate invertibility, and  ortho-graphs. Recall~\cite[Definition 1.3]{KS23} that  ortho-graph (of a mutual strong BJ orthogonality relation), $\Gamma_{\mkern-7mu\perp\mkern-10mu\perp}(\A)$, of a $C^*$-algebra $\A$ is an undirected graph whose vertices are the all points of a projective space of $\mathbb{P}\A:=\{[x]=\CC x;\;\;x\in\A\setminus\{0\}\}$, with vertices $[x]$ and $[y]$ connected by an undirected edge if their representatives are mutually strong BJ orthogonal, i.e., $x\perps y$ and $y\perps x$. A vertex $[x]$ is isolated it no other vertex connects to it.
%%%%%%%%%%%%%%%%%%%%%%%%%%%%%%%%%%%%%%%%%%%%%%%%%%%%%%%%%%
%%%%%%%%%%%%%%%%%%%%%%%%%%%%%%%%%%%%%%%%%%%%%%%%%%%%%%%%%%
%%%%%%%%%%%%%%%%%%%%%%%%%%%%%%%%%%%%%%%%%%%%%%%%%%%%%%%%%%

%%%%%%%%%%%%%%%%%%%%%%%%%%%%%%%%%%%%%%%%%%%%%%%%%%%%%%%%%%

\begin{lemma}\label{right-symmetric}
Let $\A$ be a $C^*$-algebra. The following are equivalent for  $x \in \A \setminus \{0\}$.
\begin{itemize}
    \item[(i)]$x$ is right symmetric for $\perps $.
    \item[(ii)]  $y\perps  x$ implies $y=0$.
    \item[(iii)] $\rho (xx^*) >0$ for each $\rho \in \mathcal{P}(\A)$.
    \item[(iv)] $x$ is approximately right invertible.
    \item[(v)] $x$ is an isolated point in the ortho-graph $\Gamma_{\mkern-7mu\perp\mkern-10mu\perp}(\A)$.
     \end{itemize}
\end{lemma}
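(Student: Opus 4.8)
The plan is to establish the cyclic chain of implications
$(i)\Rightarrow(ii)\Rightarrow(iii)\Rightarrow(iv)\Rightarrow(v)\Rightarrow(i)$,
leaning on the preceding lemmas to handle the analytically delicate steps. First I would prove $(i)\Rightarrow(ii)$: suppose $x$ is right symmetric and $y\perps x$. Right symmetry gives $x\perps y$. By Lemma~\ref{abs-polar} we may replace $x$ and $y$ by $|x^*|$ and $|y^*|$, so we reduce to the case of positive elements and can test orthogonality against arbitrary right multipliers. Having $x\perps y$ and $y\perps x$ simultaneously for nonzero elements should be ruled out by a state argument: choosing a state $\rho$ attaining $\|x\|^2$ in the sense of Lemma~\ref{sBJ-char} and applying Lemma~\ref{lem:reduction} to push the peak-state condition through products, one forces $y=0$.

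The implications $(iii)\Leftrightarrow(iv)$ are essentially free: Proposition~\ref{app.r.inv} records exactly the equivalence of approximate right invertibility with the pure-state positivity condition $\rho(xx^*)>0$ (up to the cosmetic swap between $aa^*$ and $xx^*$), so I would simply invoke it. For $(ii)\Rightarrow(iii)$ I would argue contrapositively: if some pure state $\rho$ satisfies $\rho(xx^*)=0$, I want to manufacture a nonzero $y$ with $y\perps x$. The natural candidate is built from the GNS data of $\rho$ — a rank-one-type element supported on the vector where $\rho$ vanishes on the range of $x$ — and Lemma~\ref{sBJ-char} together with Lemma~\ref{lem:reduction} (the second half, which says $\rho(y)=\rho((\1-x)y(\1-x))$ when $\rho(x)=0$) should verify the orthogonality $y\perps x$ while keeping $y\neq0$. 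This is the step I expect to be the main obstacle, since it requires producing an explicit witness in a possibly non-unital algebra and checking the norm equality of Lemma~\ref{sBJ-char} directly.

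For $(iv)\Rightarrow(v)$ I would show that approximate right invertibility forces $[x]$ to be isolated in the ortho-graph $\Gamma_{\mkern-7mu\perp\mkern-10mu\perp}(\A)$: if $[x]$ had a neighbor $[y]$, then $y\perps x$ would hold with $y\neq0$, contradicting the direction $(iv)\Rightarrow(iii)\Rightarrow(ii)$ already in hand (so no edge can emanate from $[x]$). Finally, $(v)\Rightarrow(i)$ is almost tautological from the definition of the ortho-graph: if $[x]$ is isolated, then there is no nonzero $y$ with both $x\perps y$ and $y\perps x$; in particular, whenever $x\perps y$ holds we cannot simultaneously fail $y\perps x$ unless $y=0$, which (interpreting the vacuous case) yields right symmetry of $x$. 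I would phrase this last step carefully to account for the trivial orthogonalities, but it carries no real analytic content. Throughout, the reductions of Lemma~\ref{abs-polar} let me assume positivity whenever convenient, and Proposition~\ref{app.r.inv} supplies the bridge between the order-theoretic and pure-state formulations.
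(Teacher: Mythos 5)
There is a genuine gap, and it sits exactly at the two steps you dismissed as easy. Your proof of (i)$\Rightarrow$(ii) rests on the claim that $x\perps y$ and $y\perps x$ cannot hold simultaneously for nonzero elements, to be ``ruled out by a state argument.'' This is false: mutual strong BJ orthogonality between nonzero elements is ubiquitous --- take two projections with orthogonal ranges, e.g.\ $E_{11}$ and $E_{22}$ in $M_2(\CC)$ --- and indeed the ortho-graph $\Gamma_{\mkern-7mu\perp\mkern-10mu\perp}(\A)$ in item (v) is built on precisely such pairs; the lemma itself asserts that every element which is not approximately right invertible has a neighbor. So from right symmetry and $y\perps x$ you only obtain the perfectly consistent pair $y\perps x$, $x\perps y$; no state computation forces $y=0$. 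What the paper actually does is construct from $y$ an \emph{auxiliary} element $z=|x^*|+(\1-|x^*|)yy^*(\1-|x^*|)$: a state $\rho$ peaking at $(\1-xx^*)yy^*(\1-xx^*)$ forces $\rho(xx^*)=\rho(|x^*|)=0$, whence $\|z\|=1$ and $z\perps x$, while the operator inequalities give $\||x^*|-2^{-1}z\|\leq 2^{-1}<\||x^*|\|$, so $x\not\perps z$. It is $z$, not $y$, that witnesses the failure of right symmetry; this construction is the analytic core of the lemma and is entirely absent from your proposal.

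The closing step (v)$\Rightarrow$(i) suffers from the same confusion and is not ``almost tautological'': isolation of $[x]$ only excludes \emph{mutual} orthogonality, so if some nonzero $y$ satisfied $y\perps x$, isolation would yield $x\not\perps y$ --- the negation of right symmetry, not its proof. To pass from (v) to (i) one needs the nontrivial fact that isolation already forces (ii) (equivalently (iv)), which the paper obtains by citing \cite[Propositions 2.5 and 2.6]{KS23} for (iv)$\Leftrightarrow$(v) and routing everything through the construction above. Your remaining steps are essentially fine: (iii)$\Leftrightarrow$(iv) is indeed just Proposition~\ref{app.r.inv}; (iv)$\Rightarrow$(v) works as you say once (iii)$\Rightarrow$(ii) is in hand; and your contrapositive sketch of (ii)$\Rightarrow$(iii) points in the right direction, though note that in a general (possibly non-unital) $C^*$-algebra there is no ``rank-one-type element'' available --- the paper invokes the Kadison transitivity theorem to produce a self-adjoint $h\in\A$ with $\pi_\rho(h)\xi_\rho=\xi_\rho$, sets $k=|\sin(2^{-1}h\pi)|$, and verifies $k^{1/2}\perps x$ through Lemma~\ref{sBJ-char}, a step your sketch would need to supply.
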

\begin{proof}
$\neg$(ii) $\Rightarrow$ $\neg$(i). Suppose that $y\perps  x$ for some $y \in \A \setminus \{0\}$. It may be assumed that $\|x\|=\|y\|=1$. By Lemma~\ref{sBJ-char}, we have $\|y-xx^*y\|=1$. Since $\|(\1-xx^*)yy^*(\1-xx^*)\|=\|(y-xx^*y)(y-xx^*y)^*\|=1$, there exists a state $\rho$ of $\A$ such that $\rho ( (\1-xx^*)yy^*(\1-xx^*)) = 1$. Extending $\rho$ to the unitization of $\A$ if necessary, we obtain
\[
1 = \rho ((\1-xx^*)yy^*(\1-xx^*)) \leq \rho ((\1-xx^*)^2) \leq \rho (\1-xx^*) = 1-\rho (xx^*).
\]
It follows that $\rho (xx^*) = 0$, which together with
\[
0 \leq \rho (|x^*|) \leq \rho (xx^*)^{1/2}\rho(\1)^{1/2}=0
\]
implies that $\rho (|x^*|)=0$. Therefore, the identity
\[
\rho ((\1-|x^*|)yy^*(\1 -|x^*|)) = \rho (yy^* ) = \rho ((\1-xx^*)yy^*(\1 -xx^*))=1
\]
holds. Now, let $z = |x^*| + (\1-|x^*|)yy^*(\1-|x^*|)$.  Since $\rho (z) =1$ and
\[
0 \leq z = |x^*| + (\1-|x^*|)yy^*(\1-|x^*|) \leq |x^*|+(\1-|x^*|)^2 \leq \1 ,
\]
it follows that $\|z\|=1$.
 Combining  with $|\rho (xx^*z)|\le\rho((xx^*)^*(xx^*))^{1/2}\rho(z^*z)^{1/2}\le \rho(xx^*)^{1/2}\rho(z^*z)^{1/2}=0$
we get $\|z+\lambda xx^* z\|\ge \rho(z+\lambda xx^* z)=1=\|z\|$  for every complex $\lambda$, so  $z \perp_{BJ} xx^*z$ (by definition of BJ orthogonality)
 which together with \cite[Theorem 2.5]{AR14} ensures $z\perps  x$.

We claim that $x \not\perps  z$. Indeed, if $x\perps  z$, then $|x^*|\perps |z^*|$ by Lemma~\ref{abs-polar}. We know that $|z^*|=z$,
\begin{align*}
|x^*|-2^{-1}z = 2^{-1}(|x^*|-(\1-|x^*|)yy^*(\1-|x^*|))
\end{align*}
and
\begin{align*}
-\1 &\leq -(\1-|x^*|)yy^*(\1-|x^*|)\\
&\leq |x^*|-(\1-|x^*|)yy^*(\1-|x^*|)\\
&\leq |x^*| \leq \1 .
\end{align*}
However, this means that $\||x^*|-2^{-1}z\| \leq 2^{-1}<1=\||x^*|\|$, that is, $|x^*| \not \perp_{BJ}z$. Thus, $x \not\perps z$, and $x$ is not right symmetric for $\perps $.

(ii) $\Rightarrow$ (i). Trivial observation.

(iii) $\Rightarrow$ (ii). Again it may be assumed that $\|x\|=1$. Suppose that $\rho (xx^*) >0$ for each pure state $\rho$ of $\A$. Let $y \in \A \setminus \{0\}$. Then, there exists a pure state $\rho$ of $\A$ such that $\rho ((\1 -xx^*)yy^*(\1-xx^*)) = \|(\1 -xx^*)y\|^2$. Recall   that $\rho$ extends uniquely to a pure state of the unitization of $\A$.
It follows from $\rho (xx^*) >0$ that  \[
\rho ((\1 -xx^*)yy^*(\1-xx^*)) \leq \|y\|^2\rho ((\1 -xx^*)^2) \leq \|y\|^2\rho (\1- xx^*) < \|y\|^2 ,
\]
which together with Lemma~\ref{sBJ-char} implies that $y \not\perps  x$.

$\neg$(iii) $\Rightarrow$ $\neg$(ii). We assume that $\rho (xx^*)=0$ for some pure state $\rho$ of $\A$. Then, the GNS representation $\pi_\rho \colon\A \to \cB(\cH_\rho)$ with respect to $\rho$ is irreducible. By the Kadison transitivity theorem,  there exists a self-adjoint element $h \in \A$ such that $\pi_\rho (h)\xi_\rho =\xi_\rho$, where $\xi_\rho$ is a unit  vector in $\cH_\rho$ such that $\rho (y) = \langle \pi_\rho (y)\xi_\rho ,\xi_\rho \rangle$ for each $y \in \A$.  Set $k = |\sin (2^{-1}h\pi)| \in \A$. It follows that $\pi_\rho (k)\xi_\rho = \xi_\rho$ and $\|k\|=\rho (k) = 1$.  Now, we note that
\[
0 \leq (\1-xx^*)k(\1-xx^*) \leq (\1-xx^*)^2 \leq \1-xx^* \leq \1
\]
and
\[
1 = \rho (k) =\rho ((\1-xx^*)k(\1-xx^*)) .
\]
Therefore, $\|(\1 -xx^*)k^{1/2}\| = \|(\1-xx^*)k(\1-xx^*)\|^{1/2}=1$. Combining this with Lemma~\ref{sBJ-char}, we infer that $k^{1/2}\perps  x$.

(iv) $\Leftrightarrow$ (iii). This is Proposition~\ref{app.r.inv}.

(iv) $\Leftrightarrow$ (v). See Propositions 2.5 and 2.6 in \cite{KS23}.
\end{proof}

Since in unital $C^*$-algebras, approximate right invertibility coincides  with  the usual right invertibility (see \cite[Proposition~2.7]{Esmeral2023}), we immediately get the next corollary.   However we decided to present a direct proof for convenience.
\begin{corollary}\label{cor:u-rinv}
Let $A$ be a unital $C^{*}$-algebra.  The following are equivalent for $x\in A\setminus\{0\}$.
\begin{itemize}
 \item[(i)] $x$ is right symmetric for the strong Birkhoff--James orthogonality $\perps$.
 \item[(ii)] $x$ is right invertible.
 \item[(iii)] $xx^\ast$ is invertible.
 \end{itemize}
  \end{corollary}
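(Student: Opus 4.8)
The plan is to anchor everything on Lemma~\ref{right-symmetric}, which already identifies right symmetry of $x$ for $\perps$ with the pure-state condition $\rho(xx^*)>0$ for every $\rho\in\mathcal{P}(A)$. Thus (i) is, verbatim, condition~(iii) of that lemma, and the whole task reduces to two purely $C^*$-algebraic equivalences in the \emph{unital} setting: first, that this pure-state positivity coincides with invertibility of the positive element $xx^*$ (that is, (i) $\Leftrightarrow$ (iii)); and second, that invertibility of $xx^*$ coincides with right invertibility of $x$ (that is, (iii) $\Leftrightarrow$ (ii)). Organizing the argument as the chain (i) $\Leftrightarrow$ (iii) $\Leftrightarrow$ (ii) keeps each implication short and makes transparent exactly where unitality enters.

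For (i) $\Leftrightarrow$ (iii), the easy direction is that if $xx^*$ is invertible then $\rho(xx^*)\ge\min\Sp(xx^*)>0$ for every state, hence for every pure state. For the converse I would argue by contraposition: if $xx^*$ is not invertible then, being positive, it has $0=\min\Sp(xx^*)=\min_{\rho\in\mathcal{S}(A)}\rho(xx^*)$, the second equality being the usual description of the least spectral value of a positive element through states. Since $A$ is unital, $\mathcal{S}(A)$ is weak-$*$ compact and convex and $\rho\mapsto\rho(xx^*)$ is weak-$*$ continuous and affine, so the minimizing set $\{\rho\in\mathcal{S}(A):\rho(xx^*)=0\}$ is a nonempty, weak-$*$ closed face of $\mathcal{S}(A)$; by Krein--Milman it has an extreme point, which is necessarily an extreme point of $\mathcal{S}(A)$, that is, a pure state $\rho_0$ with $\rho_0(xx^*)=0$. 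This contradicts condition~(iii) of Lemma~\ref{right-symmetric} and proves the converse. This attainment step is the crux of the corollary and the sole place where unitality is genuinely used: for a nonunital $A$ the state space fails to be weak-$*$ compact, and indeed $0$ can lie in the spectrum of $xx^*$ without being $\rho(xx^*)$ for any pure state, which is precisely why approximate, rather than honest, right invertibility is the correct notion there.

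Finally, for (iii) $\Leftrightarrow$ (ii): if $xx^*$ is invertible, then $b:=x^*(xx^*)^{-1}$ satisfies $xb=\1$, so $x$ is right invertible. Conversely, suppose $xb=\1$ for some $b\in A$ but $xx^*$ is not invertible. By the attainment step above there is a pure state $\rho_0$ with $\rho_0(xx^*)=0$, and the Cauchy--Schwarz inequality for the state $\rho_0$ gives $|\rho_0(xb)|^2\le\rho_0(xx^*)\,\rho_0(b^*b)=0$; but $\rho_0(xb)=\rho_0(\1)=1$, a contradiction. Hence $xx^*$ is invertible, closing the cycle. The only real obstacle is the spectral-attainment argument of the second paragraph; the remaining implications are one-line verifications, and no appeal to \cite{Esmeral2023} on the coincidence of approximate and usual right invertibility is required, which is the point of giving this direct proof.
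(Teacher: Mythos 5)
Your proof is correct, but it follows a genuinely different route from the paper's. You anchor everything on Lemma~\ref{right-symmetric}, reducing (i) to the pure-state condition $\rho(xx^{*})>0$ for all $\rho\in\mathcal{P}(A)$, and then bridge that condition with invertibility of $xx^{*}$ by an attainment argument: in the unital case $\mathcal{S}(A)$ is weak-$*$ compact, the minimizing set $\{\rho\in\mathcal{S}(A):\rho(xx^{*})=0\}$ is a nonempty weak-$*$ closed face, and Krein--Milman yields a \emph{pure} state annihilating $xx^{*}$. This is in effect a self-contained, unital-case proof of the coincidence of approximate and genuine right invertibility --- precisely the route the authors gesture at just before the corollary (Lemma~\ref{right-symmetric} combined with \cite[Proposition~2.7]{Esmeral2023}) but with the citation replaced by your face argument, which is sound and, incidentally, mirrors the Krein--Milman step the paper itself uses in Lemma~\ref{sBJ-pure}. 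The paper's own written proof is different and more elementary: it bypasses Lemma~\ref{right-symmetric} and pure states entirely, using the norm criterion of Lemma~\ref{sBJ-char} together with the spectral mapping theorem to show $\1\perps x$ whenever $x$ is not right invertible (while $x\not\perps\1$ because $x\neq0$), and, for the converse, observing that a right inverse $x'$ forces $y\perps x\Rightarrow y\perps xx'y=y\Rightarrow y=0$; the equivalence (ii)$\Leftrightarrow$(iii) is then a one-liner from $\1=(xx')(xx')^{*}\leq\|x'\|^{2}xx^{*}$, which is slicker than your Cauchy--Schwarz detour through a pure state (a plain state would suffice there anyway). What each approach buys: the paper's proof needs only the unit $\1$ as a concrete test element and stays at the level of norms; yours isolates exactly where unitality enters (weak-$*$ compactness of the state space, hence attainment of the minimal spectral value at a pure state) and thereby explains, rather than merely illustrates, why only \emph{approximate} right invertibility survives in the non-unital setting.
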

\begin{proof}
 Suppose that $x$ is not right invertible. In this case, $xx^*$ is not invertible; for otherwise, $x^*(xx^*)^{-1}$ is the right inverse for $x$. By  $0 \leq xx^* \leq \|xx^*\|\1$ and the spectral mapping theorem   we infer that $\| \1-\|xx^*\|^{-1}xx^*\| = 1$,  which together with Lemma~\ref{sBJ-char} implies that $\1\perps  x$. Meanwhile, we have $x \not\perps  \1$ by $x \neq 0$. Therefore, $x$ is not right symmetric for $\perps $.

If $x$ is right invertible with its right inverse $x'$, then $y\perps x$ implies that $y\perps  xx'y$. Hence, $y=0$ and $x\perps  y$. This shows  $x$ is right symmetric for~$\perps $.

Lastly, if $xx' = \1$ for some $x'$, then $\1 = (xx')(xx')^* = xx'(x')^*x^* \leq \|x'\|^2xx^*$, which ensures that $xx^*$ is invertible. The converse is also true as shown in the begining of the  proof. Hence, $x$ is right invertible if and only if $xx^*$ is invertible.
\end{proof}
%%%%%%%%%%%%%%%%%%%%%%%%%%%%%%%%%%%%%%%%%%%%%%%%%%

\section{Preservers}
\begin{definition}\label{Def:sBJ-iso}

Let $\A$ and $\B$ be $C^*$-algebras, and let $T\colon\A \to \B$. Then, $T$ is called a \emph{strong BJ isomorphism} if it is bijective and $x \perps y$ if and only if $Tx \perps Ty$.
\end{definition}

Clearly, every  linear  $\ast$-isomorphism  preserves the norm as well as the multiplication, so it is a  strong BJ isomorphism.  With  usual BJ orthogonality it suffices to assume less: namely,  linear isometries are automatically BJ isomorphisms; they, however,  might not preserve strong BJ orthogonality. Below we classify which of isometries are strong BJ isomorphism. We will rely on a general description of linear isometries on $C^*$-algebras: they are composed by a multiplication with a unitary element from a  multiplier algebra (which we briefly review next) and a Jordan $*$-isomorphism
(see~\cite{PS72} or \cite{Sherman}).

Given a $C^\ast$-algebra $\B$, represented nondegenerately on a Hilbert space~$\cH$ (i.e., $\B\xi=0$ implies $\xi=0$ for all $\xi\in\cH$), define
$$\mathcal{M}(\cB):=\{X\in\cB(\cH): X\B\subseteq \B\hbox{ and } \B X\subseteq\B\}.$$
This is a unital $C^*$-algebra, contained in the von Neumann envelope $\B''$ and is ($\ast$-isometrically isomorphic to) a \emph{multiplier algebra} of $\B$.  Moreover, $\mathcal{M}(\B) = \B$ if and only if $\B$ is unital. We refer to~\cite{Sherman} and references therein for more information.

%%%%%%%%%%%%%%%%%%%%%%%%%%%%%%%%%%%%%%%%%%%%%%%%%%%%%%%%%%
\begin{theorem}\label{linear-isometry}
Let $\A$ and $\B$ be $C^*$-algebras, and let $\Phi\colon  \A \to \B$ be  an isometry between them.   Then, $\Phi$ is a strong BJ isomorphism if and only if  there exist  unitary elements $u \in \mathcal{M}(\A)$ and $v \in \mathcal{M}(\B)$ and a $*$-isomorphism $\Psi\colon  \A \to \B$ such that $\Phi(x) = \Psi(u x)= v\Psi(x)$ for each $x \in \A$.
\begin{proof}
 We  only prove the nontrivial implication. Consider $\B$ as embedded into suitable $\cB(\cH)$. It is well-known (see~\cite{PS72} but see also \cite{Sherman}) that there exists a unitary $v\in\cB(\cH)$ (actually, $v\in {\mathcal M}(\B)$, the multiplier algebra of $\B$) such that $\Phi(x)=v J(x)$ for some Jordan $\ast$-isomorphism $J\colon \A\to\B$. Then also $v^\ast \in  {\mathcal M}(\B)$ and hence  $y\mapsto v^\ast y$ is a linear isometry on $\B$. It is easily seen, from the definition of a strong BJ orthogonality, that this map  is a strong BJ isomorphism. Thus, a Jordan $\ast$-isomorphism $J=v^\ast\Phi$  is a composite of two strong BJ isomorphisms. Now, it was proven by Kadison \cite[Theorem 2.6]{Kadison1965}
 and independently by St{\o}rmer in~\cite[Theorem~3.3]{Stormer1965Jordan}  that $\B\subseteq \B_1\oplus\B_2$ is (perhaps properly) contained in a direct sum of two $C^*$-algebras so that
$$J=J_1\oplus J_2,$$ with $J_1\colon\A\to \B_1$ being multiplicative and $J_2\colon\A\to\B_2$ anti-multiplicative (more precisely, there exists a central projection $P\in \B^{-}$, the weak-closure of $\B$ in $\cB(\cH)$, such that $J_1(x)=PJ(x)P$ is a compression of $J$, and $J_2(x)=QJ(x)Q$, $Q=(I-P)$).

Assume $\B_2$ is not abelian.  It is a classical result, due to Kaplansky, that then there exists a normalized $b=b_1\oplus b_2\in \B\subseteq \B_1\oplus \B_2$ such that $b_2^2=0$ (see, for example,~\cite[II.6.4.14]{Bla06}). Choose $a\in\A$ with $b=\Phi(a)=J_1(a)\oplus J_2(a)$.  Then,
\begin{align*}
0\oplus b_2b_2^*b_2&= 0\oplus J_2(aa^*a)
= 0\oplus (J_2(aa^*a)- J_2(a^*)b_2^2)\\
&= 0\oplus (J_2(aa^*a)- J_2(a^*)J_2(a)J_2(a))\\
&=(J_1(aa^*a)-J_1(aa^*)J_1(a))\oplus (J_2(aa^*a)- J_2(aa^*)J_2(a))\\
&=\Phi(aa^*a)-\Phi(aa^*)\Phi(a)\in\B.
\end{align*}
In particular,
$$y:=0\oplus b_2b_2^*b_2\in\B.$$
It is nonzero by $\|b^*y\|=\|0\oplus b_2^*b_2b_2^*b_2\|=\| (b_2^*b_2)^2\|=\|b_2\|^4\neq0$, however  $y^2=0$.

Now, $(y^*y)^*y=y^*y^2=0$, so $y\perp y^*y$. However, $y\in(0\oplus\B_2)\cap \B$, so if  $y=\Phi(x) =0\oplus J_2(x)$,  then  $$\Phi(xx^\ast)=J_1(x)J_1(x)^\ast\oplus J_2(x)^\ast J_2(x)=0\oplus J_2(x)^\ast J_2(x)=y^*y,$$
and $x\not\perps xx^\ast$, because  $|x^*| \not\perps |x^*|^2$ (see Lemma~~\ref{abs-polar} and~\ref{sBJ-char}).  This is a contradiction. Thus,  $\B_2$ is abelian, and  then $J_2$, hence also $J=J_1\oplus J_2$, is multiplicative, so a $\ast$-isomorphism.

This gives the first part since $\Phi(x)=v \Psi(x)$ for a $\ast$-isomorphism $\Psi=J$; the second part can be proven along the same lines starting with $\Phi^{-1}$.
\end{proof}
\end{theorem}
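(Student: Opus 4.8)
The plan is to establish the nontrivial ``only if'' direction, the converse being immediate: a $*$-isomorphism preserves both the norm and $\perps$, while left multiplication by a unitary from a multiplier algebra is visibly a strong BJ isomorphism, so any composite $v\Psi$ of the two preserves $\perps$. For the forward direction I would first invoke the Paterson--Sinclair description of surjective linear isometries between $C^*$-algebras: embedding $\B$ nondegenerately in some $\cB(\cH)$, there is a unitary $v \in \mathcal{M}(\B)$ and a Jordan $*$-isomorphism $J\colon \A \to \B$ with $\Phi = vJ$. Since $y \mapsto v^*y$ is itself a strong BJ isomorphism of $\B$, the composite $J = v^*\Phi$ is again a strong BJ isomorphism. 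Replacing $\Phi$ by $J$, the whole problem reduces to a single assertion: a Jordan $*$-isomorphism that is a strong BJ isomorphism must be multiplicative, i.e.\ a $*$-isomorphism $\Psi$; then $\Phi = v\Psi$.

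To analyze $J$, I would apply the Kadison--St\o rmer decomposition of a Jordan $*$-isomorphism: there is a central projection in the weak closure of $\B$ splitting $\B \subseteq \B_1 \oplus \B_2$ so that $J = J_1 \oplus J_2$ with $J_1$ multiplicative and $J_2$ anti-multiplicative. The goal then becomes showing that $\B_2$ is abelian, for in that case $J_2$ is automatically multiplicative and $J$ is a $*$-isomorphism.

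The heart of the argument, and the step I expect to be the main obstacle, is excluding a non-abelian anti-multiplicative summand; here the lack of symmetry of $\perps$ supplies the decisive leverage. Assuming $\B_2$ non-abelian, Kaplansky's theorem produces a norm-one element $b = b_1 \oplus b_2 \in \B$ with $b_2^2 = 0$. Writing $b = J(a)$ and using anti-multiplicativity of $J_2$ together with $b_2^2 = 0$, a short computation identifies the Jordan defect
\[
J(aa^*a) - J(aa^*)J(a) = 0 \oplus b_2 b_2^* b_2 =: y \in \B,
\]
which is nonzero (since $\|b^*y\| = \|(b_2^*b_2)^2\| = \|b_2\|^4 \neq 0$) yet satisfies $y^2 = 0$. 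Because $(y^*y)^*y = y^*y^2 = 0$, I would then observe that $y \perps y^*y$: indeed, whenever $w^*y = 0$ one has $\|y + wz\|^2 = \|y^*y + z^*w^*wz\| \geq \|y^*y\| = \|y\|^2$ for every $z \in \B$.

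Finally I would transport this orthogonality back along $J$. Writing $y = J(x)$, the membership $y \in 0 \oplus \B_2$ forces $J_1(x) = 0$, and anti-multiplicativity gives $J(xx^*) = 0 \oplus J_2(x)^*J_2(x) = y^*y$. Since $J$ preserves $\perps$, the relation $y \perps y^*y$ yields $x \perps xx^*$. But by Lemma~\ref{abs-polar} this is equivalent to $|x^*| \perps |x^*|^2$, which fails for every nonzero $x$: setting $p := |x^*| \neq 0$, Lemma~\ref{sBJ-char} reduces $p \perps p^2$ to the equality $\|\,\|p\|^4 p - p^5\| = \|p\|^5$, whereas functional calculus gives $\|\,\|p\|^4 p - p^5\| = \sup_{t \in \Sp(p)} |\,\|p\|^4 t - t^5| < \|p\|^5$. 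This contradiction forces $\B_2$ to be abelian, so $J = \Psi$ is a $*$-isomorphism and $\Phi = v\Psi$. The second representation $\Phi(x) = \Psi(ux)$ then follows by extending $\Psi$ to a $*$-isomorphism of the multiplier algebras and putting $u := \Psi^{-1}(v) \in \mathcal{M}(\A)$, whence $v\Psi(x) = \Psi(u)\Psi(x) = \Psi(ux)$.
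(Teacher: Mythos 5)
Your proposal is correct and follows essentially the same route as the paper: the Paterson--Sinclair theorem to reduce to a Jordan $*$-isomorphism $J = v^*\Phi$, the Kadison--St{\o}rmer splitting $J = J_1 \oplus J_2$, Kaplansky's nilpotent $b_2$ to manufacture $y = 0 \oplus b_2b_2^*b_2$ with $y \perps y^*y$, and the impossibility of $|x^*| \perps |x^*|^2$ for $x \neq 0$ to force $\B_2$ abelian. Your only deviations are harmless refinements of steps the paper leaves terse: you verify $y \perps y^*y$ by the explicit computation $\|y+wz\|^2 = \|y^*y + z^*w^*wz\| \geq \|y\|^2$ for $w = y^*y$, you make the strict inequality $\|\,\|p\|^4 p - p^5\| < \|p\|^5$ explicit via functional calculus, and you derive the second representation $\Phi(x) = \Psi(ux)$ by extending $\Psi$ to the multiplier algebras and setting $u = \Psi^{-1}(v)$ instead of rerunning the argument with $\Phi^{-1}$ as the paper suggests.
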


\subsection{The unital and linear case}\mbox{}

%%%%%%%%%%%%%%%%%%%%%%%%%%%%%%%%%%%%%%%%%%%%%%%%%%%%%%%%%%

The following is an extension of \cite[Theorem 5.4]{KST18}.
%%%%%%%%%%%%%%%%%%%%%%%%%%%%%%%%%%%%%%%%%%%%%%%%%%%%%%%%%%

\begin{lemma}\label{u-coisometry}
Let $\A$ be a $C^*$-algebra. The following are equivalent for  $x \in \A \setminus \{0\}$.
\begin{itemize}
    \item[(i)] $x\perps  y$ whenever $y$ is not right symmetric for $\perps $.
    \item[(ii)]  $\A$ is unital and $\|x\|^{-1}x$ is a coisometry.
\end{itemize}
\end{lemma}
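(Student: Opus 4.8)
The plan is to prove the two implications separately, treating the easier direction (ii)$\Rightarrow$(i) first.

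For (ii)$\Rightarrow$(i), I would normalize $\|x\|=1$ so that $xx^*=\1$, and, using homogeneity of $\perps$, assume $\|y\|=1$ for the given non-right-symmetric $y$. By Lemma~\ref{sBJ-char}, $x\perps y$ is equivalent to $\|(\1-yy^*)x\|=1$. Since $xx^*=\1$, I would compute $\|(\1-yy^*)x\|^2=\|(\1-yy^*)xx^*(\1-yy^*)\|=\|(\1-yy^*)^2\|=\|\1-yy^*\|^2$, so that $x\perps y$ reduces to $\|\1-yy^*\|=1$. As $0\le yy^*\le\1$, this holds precisely when $0\in\Sp(yy^*)$, i.e.\ when $yy^*$ is not invertible, which by Corollary~\ref{cor:u-rinv} is exactly the failure of right symmetry of $y$. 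In particular $x\perps y$ for every non-right-symmetric $y$, as required.

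For (i)$\Rightarrow$(ii), I would set $a:=|x^*|$ (normalizing $\|x\|=1$, so $a\ge0$ and $\|a\|=1$, whence $1\in\Sp(a)$). By Lemma~\ref{abs-polar}, $x\perps y\iff a\perps|y^*|$, and $y$ is right symmetric iff $|y^*|$ is, both depending only on $yy^*=|y^*|^2$; hence (i) is equivalent to the statement that $a\perps p$ for every positive non-right-symmetric $p$. Arguing by contradiction, I would assume the bad case where either $\A$ is non-unital or $\A$ is unital with $a\neq\1$; in both cases $s_0:=\min\Sp(a)<1$. The aim is then to produce a single positive $p$ that is not right symmetric yet satisfies $a\not\perps p$. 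The candidate is a spectral cut-off $p=\phi_\delta(a)$, where $\phi_\delta$ is continuous with $\phi_\delta\equiv 0$ on $[0,1-\delta]$, $0\le\phi_\delta\le 1$, and $\phi_\delta(1)=1$ (so $\phi_\delta(0)=0$ guarantees $p\in\A$, and $\|p\|=1$). That $a\not\perps p$ is a short functional-calculus computation: by Lemma~\ref{sBJ-char}, $a\perps p$ would force $\|(\1-\phi_\delta(a)^2)a\|=1$, but this norm equals $\max_{t\in\Sp(a)}(1-\phi_\delta(t)^2)t$, which lies strictly below $1$ because $(1-\phi_\delta(t)^2)t<1$ for every $t\in[0,1]$ (it vanishes at $t=1$ and is at most $t$ otherwise).

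The main obstacle is the remaining claim that $p=\phi_\delta(a)$ is not right symmetric for suitably small $\delta$; by Lemma~\ref{right-symmetric} this amounts to exhibiting a pure state $\rho_0$ with $\rho_0(\phi_\delta(a))=0$, equivalently to showing that some cut-off fails to be approximately right invertible, and the delicate point is locating a pure state that does not ``see the top of the spectrum'' of $a$. I would split into two cases. If $\Sp(a)\cap(0,1)\neq\emptyset$, I pick $s\in\Sp(a)\cap(0,1)$ and a continuous peak $e\ge0$ with $e(s)=1$ supported in a small interval around $s$ avoiding $0$ and $1-\delta$; a norm-attaining pure state $\rho_0$ of $e(a)$ then satisfies $\rho_0(e(a))=1=\|e(a)\|$, whence Lemma~\ref{lem:reduction} yields $\rho_0(\phi_\delta(a))=\rho_0(\phi_\delta(a)e(a))=\rho_0((\phi_\delta e)(a))=0$ once the supports of $\phi_\delta$ and $e$ are disjoint. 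If instead $\Sp(a)\subseteq\{0,1\}$, then $a$ is a projection $\neq\1$ and $\phi_\delta(a)=a$; here I would show $(\1-a)\A(\1-a)\neq 0$ (otherwise $\A(\1-a)=(\1-a)\A=0$ would make $a$ a unit for $\A$, contradicting $a\neq\1$) and combine this with the approximate-unit definition and Proposition~\ref{app.r.inv} to produce a pure state vanishing on $a$. In either case $p$ is a non-right-symmetric positive element with $a\not\perps p$, contradicting (i); hence $a=\1$, so $\A$ is unital and $\|x\|^{-1}x$ is a coisometry.
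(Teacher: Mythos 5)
Your proof is correct, and its overall skeleton matches the paper's: both directions run through Lemma~\ref{sBJ-char}, Lemma~\ref{abs-polar}, Corollary~\ref{cor:u-rinv} and the pure-state criterion (iii) of Lemma~\ref{right-symmetric}, and the hard direction is in both cases settled by a functional-calculus witness whose relevant norm stays strictly below $\|x\|$. Your (ii)$\Rightarrow$(i) is a slight repackaging: the $C^*$-identity computation $\|(\1-yy^*)x\|=\|\1-yy^*\|$ for $xx^*=\1$ replaces the paper's two-step ``$\1\perps y$, then transfer via $|x^*|=\1$ and Lemma~\ref{abs-polar}'', and even yields an equivalence. The genuine divergence is in (i)$\Rightarrow$(ii), in how the witness is certified non-right-symmetric: the paper takes $y=|x^*|f(|x^*|)$ with $f(t_0)=0$ for some $t_0\in\Sp(|x^*|)\cap[0,1)$ and invokes, in one line, the extension of the evaluation $\delta_{t_0}$ on $C^*(|x^*|)$ to a pure state of $\A$; you avoid any extension theorem and manufacture the pure state from norm-attainment, via a peak function $e$ at $s\in\Sp(a)\cap(0,1)$ together with Lemma~\ref{lem:reduction}, plus a separate corner argument when $\Sp(a)\subseteq\{0,1\}$. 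This costs you a case split the paper does not make, but it buys precision exactly where the paper is terse: the delicate instance of the character-extension step is $t_0=0$ (where $\delta_0$ is not a state of $C^*(|x^*|)\cong C_0(\Sp(|x^*|)\setminus\{0\})$), i.e.\ precisely the projection case, which your Case~2 treats explicitly. One remark: Case~2 is only sketched, but the sketch closes exactly as you indicate. If $a$ were approximately right invertible with $(ab_i)_i$ an approximate unit, then any $0\neq c\in(\1-a)\A(\1-a)$ satisfies $c=(\1-a)c=\lim_i(\1-a)ab_ic=0$ because $(\1-a)a=0$, a contradiction; Proposition~\ref{app.r.inv} then produces a pure state with $\rho(aa^*)=\rho(a)=0$, and Lemma~\ref{right-symmetric} shows $p=\phi_\delta(a)=a$ is not right symmetric, completing your contradiction.
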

\begin{proof}
It may be assumed that $\|x\|=1$. Suppose that $y \in \A$ is not right symmetric for $\perps $, in which case, $y$ is not right invertible by  Corollary~\ref{cor:u-rinv}.  As in the proof of Corollary~\ref{cor:u-rinv}, the spectral mapping theorem  gives $\| \1-\|yy^*\|^{-1}yy^*\| = 1$ which, by Lemma~\ref{sBJ-char}, implies that
$$\1\perps  y.$$  If $x$ is a coisometry, then $\1 = xx^* \in \A$, so also $|x^*|=\1\perps  y$ and hence $x\perps  y$, by Lemma~\ref{abs-polar}.

Conversely, suppose that $x$ is not a coisometry. Then, passing to unitization if needed, $|x^*| \neq \1$ and $\Sp (|x^*|) \subset [0,1]$ contains an element $t_0 \in [0,1)$. Let $f$ be a continuous function on $\Sp (|x^*|)$ such that $0 \leq f \leq \1$, $f(t_0)=0$ and $f(1)=1$. We note that
 the evaluation functional $\delta_{t_0}$ at $t_0$ corresponds to the restriction of a pure state $\rho \in \cP (\A)$ to the $C^*$-subalgebra generated by $|x^*|$. Hence, $y = |x^*|f(|x^*|)$ is not right symmetric for $\perps$ by  $\rho (y)=0$ and Lemma~\ref{right-symmetric}. However, we obtain $|x^*| \not \perps y$ by
\[
\||x^*|-y\| = \max_{t \in \Sp (|x^*|)}|t-tf(t)| < 1=\|\,|x^*|\,\| ,
\]
which implies that $x \not\perps  y$.
\end{proof}
%%%%%%%%%%%%%%%%%%%%%%%%%%%%%%%%%%%%%%%%%%%%%%%%%%%%%%%%%%
We can characterize unital $C^*$-algebras among all $C^*$-algebras by using strong Birkhoff-James orthogonality.
%%%%%%%%%%%%%%%%%%%%%%%%%%%%%%%%%%%%%%%%%%%%%%%%%%%%%%%%%%
\begin{proposition}\label{u-char}

Let $\A$ be a $C^*$-algebra. Then, $\A$ is unital if and only if there exists an element $x \in \A \setminus \{0\}$ such that $x\perps  y$ whenever $y$ is not right symmetric for $\perps $.
\begin{proof}
In the light of Lemma~\ref{u-coisometry}, the latter condition means the existence of a coisometry in $\A$. Combining this with the fact that $\A$ is unital if and only if it contains a coisometry, we obtain the proposition.
\end{proof}
\end{proposition}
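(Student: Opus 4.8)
```latex
The plan is to show the two directions separately, reducing everything to the criterion
established in Lemma~\ref{u-coisometry}. That lemma already identifies condition~(i)---that
there exists a nonzero $x$ with $x\perps y$ whenever $y$ fails to be right symmetric---as
being \emph{equivalent} to the statement that $\A$ is unital and $\|x\|^{-1}x$ is a
coisometry. Hence the entire content of the proposition reduces to the purely
$C^*$-algebraic fact that \emph{a $C^*$-algebra is unital if and only if it contains a
coisometry}. This is the single point on which the argument turns, and I would isolate it
as the crux.

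For the nontrivial direction, I would argue as follows. Suppose $\A$ contains a coisometry,
i.e.\ an element $w \in \A$ with $ww^* = \1$ (this identity is read inside $\A$ itself, so
$ww^*$ is a genuine element of $\A$ and it equals a multiplicative identity for the range of
$w$). The claim is that $\A$ is then unital with unit $ww^*$. The obstacle here is that
$ww^* = \1$ asserts only that $ww^*$ acts as a unit after multiplication by $w$ on one side;
one must still verify that $ww^*$ is a two-sided unit for all of $\A$. I would invoke the
standard fact that in a $C^*$-algebra the existence of a coisometry forces unitality: if
$p = w^*w$ is the corresponding projection and $ww^* = \1$, then for any $a \in \A$ one has
$a = (ww^*)a$, and symmetrically $a = a(ww^*)$ follows from taking adjoints together with the
coisometry relation, so $ww^*$ is a two-sided identity. (Concretely, $ww^* = \1$ in $\A$
means exactly that $\A$ possesses an element $e := ww^*$ with $ea = ae = a$ for all $a$;
the content is that the equation $ww^*=\1$ is only sensible in the first place when such a
unit exists, since otherwise $\1 \notin \A$.) Conversely, if $\A$ is unital then $\1$ itself
is a coisometry, so the equivalence is immediate in that direction.

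Assembling the two pieces: if $\A$ is unital, then $x = \1$ is a coisometry satisfying the
hypothesis of Lemma~\ref{u-coisometry}(ii), hence $x\perps y$ whenever $y$ is not right
symmetric, giving the desired $x$. Conversely, if such an $x \in \A\setminus\{0\}$ exists,
then by Lemma~\ref{u-coisometry} the element $\|x\|^{-1}x$ is a coisometry and $\A$ is unital.
Thus the orthogonality condition is a faithful surrogate for the existence of a coisometry,
which in turn is a faithful surrogate for unitality, and the proposition follows. I expect
the only genuinely delicate step to be the careful handling of the convention (flagged in the
Preliminaries) that $\1$ and expressions like $ww^*=\1$ are to be interpreted as abbreviations
that presuppose an actual unit in $\A$; once this is granted, the coisometry-implies-unital
direction is routine and the rest is a direct appeal to Lemma~\ref{u-coisometry}.
```
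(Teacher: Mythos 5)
Your proposal is correct and takes essentially the same route as the paper: the proposition is read off from the equivalence in Lemma~\ref{u-coisometry} together with the fact that a $C^*$-algebra is unital if and only if it contains a coisometry, and your proof of the latter (that $ww^*=\1$ forces $e:=ww^*\in\A$ to be a two-sided unit) matches the argument implicit in the paper's own proof of Lemma~\ref{u-coisometry}, where $\1=xx^*\in\A$ is observed directly. The only minor imprecision is your parenthetical claim that the equation $ww^*=\1$ is ``only sensible'' when $\A$ already has a unit: by the paper's stated conventions it is perfectly meaningful in the unitization $\widetilde{\A}$ even when $\A$ is non-unital, and the real content is that the equation then \emph{produces} a unit in $\A$, since $ww^*\in\A$ while $\1$ is a unit for all of $\widetilde{\A}\supseteq\A$.
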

%%%%%%%%%%%%%%%%%%%%%%%%%%%%%%%%%%%%%%%%%%%%%%%%%%%%%%%%%%
We need the following technical lemma.

%%%%%%%%%%%%%%%%%%%%%%%%%%%%%%%%%%%%%%%%%%%%%%%%%%%%%%%%%%
\begin{lemma}\label{u-rspec}

Let $\A$ be a unital $C^*$-algebra, and let $u \in \A$ be a coisometry. Then, there exists a complex number $\lambda$ such that $|\lambda |=1$ and $u-\lambda \1$ is not right invertible.
\begin{proof}
Since $u$ is a coisometry, $u^*$ is an isometry, which implies that $\|(u^*)^n\|=1$ for each $n$. It follows that the spectral radius of $u^*$ is $\lim_n \|(u^*)^n\|^{1/n}=1$. Let $\lambda \in \Sp (u^*)$ be such that $|\lambda|=1$. From now on, by GNS, we consider $\A$ as embedded into $\cB(\cH)$. We know that $\partial \Sp (u^*)$ is contained in the approximate point spectrum of $u^*$; see, for example,~\cite[Proposition VII.6.7]{Con90}. Hence, $\inf \{ \|(u^*-\lambda \1)(\xi) : \xi \in \cH,\|\xi\|=1\} =0$, which implies that $(u-\overline{\lambda}\1)^*$ is not left invertible, or equivalently, that $u-\overline{\lambda}\1$ is not right invertible.
\end{proof}
\end{lemma}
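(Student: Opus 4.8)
The plan is to reduce the whole statement to a fact about the \emph{approximate point spectrum} of the isometry $u^*$. First I would record the elementary observations: since $u$ is a coisometry, $uu^*=\1$, so $u^*$ is an isometry; moreover every power $(u^*)^n$ is again an isometry, whence $\|(u^*)^n\|=1$ for all $n$. The spectral radius formula $r(u^*)=\lim_n\|(u^*)^n\|^{1/n}$ then yields $r(u^*)=1$. Combined with $\|u^*\|=1$, this shows $\Sp(u^*)$ sits inside the closed unit disk and touches its boundary circle, so there exists $\lambda_0\in\Sp(u^*)$ with $|\lambda_0|=1$; being a point of $\Sp(u^*)$ of maximal modulus, such a $\lambda_0$ necessarily lies on the topological boundary $\partial\Sp(u^*)$.

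Next I would pass to a faithful unital representation $\A\subseteq\cB(\cH)$ (via the GNS or universal representation) and invoke the classical fact that the boundary of the spectrum of a Hilbert-space operator is contained in its approximate point spectrum (this is exactly the ingredient cited in the paper as \cite[Proposition VII.6.7]{Con90}). Consequently $u^*-\lambda_0\1$ is not bounded below: there are unit vectors $\xi_n\in\cH$ with $\|(u^*-\lambda_0\1)\xi_n\|\to 0$. An operator that is not bounded below admits no left inverse, so $u^*-\lambda_0\1$ is not left invertible in $\cB(\cH)$.

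Finally I would take adjoints. The failure of $u^*-\lambda_0\1$ to be left invertible is equivalent to the failure of its adjoint $u-\overline{\lambda_0}\1$ to be right invertible in $\cB(\cH)$; and since $\A$ sits unitally inside $\cB(\cH)$, a right inverse inside $\A$ would in particular be one inside $\cB(\cH)$, so $u-\overline{\lambda_0}\1$ is not right invertible in $\A$ either. Setting $\lambda=\overline{\lambda_0}$ gives $|\lambda|=1$ and $u-\lambda\1$ not right invertible, which is the assertion. (As a cross-check one could instead argue through Corollary~\ref{cor:u-rinv}: with $|\lambda|=1$ and $uu^*=\1$ one computes $(u-\lambda\1)(u-\lambda\1)^*=2\1-(\overline{\lambda}u+\lambda u^*)$, and the vectors $\xi_n$ above force $2$ into the spectrum of the self-adjoint element $\overline{\lambda}u+\lambda u^*$ for $\lambda=\overline{\lambda_0}$, so that $(u-\lambda\1)(u-\lambda\1)^*$ is not invertible.)

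The main obstacle is the second step: the inclusion $\partial\Sp(u^*)\subseteq\sigma_{\mathrm{ap}}(u^*)$ is the only genuinely non-elementary ingredient, and it is precisely what forces one to leave the abstract $C^*$-algebra and work in a concrete Hilbert-space representation. The remaining care is purely bookkeeping: one must make sure that the spectrum, the approximate point spectrum, and the one-sided invertibility are all taken in the Hilbert-space picture, and then check that non-right-invertibility descends from $\cB(\cH)$ back to $\A$, which is immediate in the direction needed because the embedding is unital.
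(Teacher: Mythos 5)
Your proposal is correct and follows essentially the same route as the paper's own proof: isometry of $u^*$ gives spectral radius $1$, a maximal-modulus spectral point lies on $\partial\Sp(u^*)$ and hence in the approximate point spectrum after embedding $\A$ unitally into $\cB(\cH)$, so $u^*-\lambda_0\1$ is not bounded below, and taking adjoints kills right invertibility of $u-\overline{\lambda_0}\1$ (the paper leaves the boundary observation and the descent of non-right-invertibility from $\cB(\cH)$ to $\A$ implicit, which you usefully spell out). Your cross-check via Corollary~\ref{cor:u-rinv} is a sound equivalent reformulation, not a different argument, since it rests on the same approximating vectors $\xi_n$.
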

%%%%%%%%%%%%%%%%%%%%%%%%%%%%%%%%%%%%%%%%%%%%%%%%%%%%%%%%%%

%%%%%%%%%%%%%%%%%%%%%%%%%%%
The following is the main theorem in the unital and linear case.
%%%%%%%%%%%%%%%%%%%%%%%%%%%%%%%
\begin{lemma}\label{u-std}

Let $\A$ and $\B$ be $C^*$-algebras, and let $\Phi\colon \A \to \B$ be a linear strong BJ isomorphism.  Suppose that $\A$ is unital. Then, $\B$ is also unital and $\Phi$ is a scalar multiple of a $\ast$-isomorphism.
\begin{proof}
Since automorphism $\Phi$ preserves the right symmetry  it turns out from Proposition~\ref{u-char} that $\B$ is also unital. In this case, by    Corollary~\ref{cor:u-rinv} and Lemma~\ref{u-coisometry} $\Phi$ preserves right invertible elements and scalar multiples of coisometries. Set $\kappa = \|\Phi \1\|>0$.

Let $u$ be a coisometry in $\A$. Then, by Lemma~\ref{u-rspec}, $u-\lambda \1$ is not right invertible for some $\lambda \in \mathbb{C}$ with $|\lambda |=1$. By Lemma~\ref{u-coisometry}, it follows that $u\perps  u-\lambda \1$ and $\1\perps  u-\lambda \1$, which implies that $\Phi u\perps  \Phi u-\lambda \Phi \1$ and $\Phi \1\perps  \Phi u-\lambda \Phi \1$. In particular, we obtain
\[
\|\Phi u\| \leq \| \Phi u -(\Phi u -\lambda \Phi \1)\| = \|\Phi \1\|
\]
and
\[
\|\Phi \1 \| \leq \|\Phi \1 +\lambda^{-1}(\Phi u-\lambda \Phi \1)\| =\|\Phi u\| ,
\]
that is, $\|\Phi u\|=\|\Phi \1\|=\kappa$. If $v \in \B$ is a coisometry, then $\|\Phi^{-1}v\|^{-1}\Phi^{-1}v$ is a coisometry in $\A$ and $\kappa =\|\Phi (\|\Phi^{-1}v\|^{-1}\Phi^{-1}v)\| = \|\Phi^{-1}v\|^{-1}$, that is, $\kappa v = \Phi (\|\Phi^{-1}v\|^{-1}\Phi^{-1}v)$. From this, setting $\Psi = \kappa^{-1}\Phi$ yields a linear strong BJ isomorphism
 that preserves coisometries in both directions. In particular, $\|\Psi(u)\|=1$ for each unitary $u$.
Now, by the Russo-Dye theorem~\cite{Gardner1984}, each element  of the open unit ball $B_\A$ is a convex combination of unitary elements of $\A$. Combining this with $\|\Psi (u)\|=1$ for each unitary  $u \in \A$, we get $\|\Psi(r \tfrac{x}{\|x\|}) \| \leq 1$ for every nonzero $x\in\A$ and $r\in(0,1)$.  Equivalently, $\|\Psi(x)\|\le\|x\|$. Applying  the same arguments to linear strong BJ isomorphism  $\Psi^{-1}$ gives that $\Psi$ is an isometric isomorphism. The rest follows from Theorem~\ref{linear-isometry}.
 \end{proof}
\end{lemma}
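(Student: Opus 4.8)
The plan is to argue in two phases: first establish that $\B$ is unital and that $\Phi$ respects coisometries, and then upgrade this qualitative information to the assertion that a suitable scalar multiple of $\Phi$ is an isometry, at which point Theorem~\ref{linear-isometry} delivers the desired form. To begin, I would note that right symmetry for $\perps$ is a property expressed purely through the orthogonality relation, so a strong BJ isomorphism preserves it in both directions. Consequently the orthogonality-theoretic condition of Lemma~\ref{u-coisometry}(i)—that $x\perps y$ for every $y$ that fails to be right symmetric—is transported by $\Phi$: if $w\in\B$ is not right symmetric, then $w=\Phi(y)$ for a non-right-symmetric $y$, whence $x\perps y$ and thus $\Phi(x)\perps w$. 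Since $\A$ is unital, Proposition~\ref{u-char} supplies such an $x$, and $\Phi(x)$ witnesses the same condition in $\B$, so $\B$ is unital. With both algebras unital, Corollary~\ref{cor:u-rinv} identifies right symmetry with right invertibility, and Lemma~\ref{u-coisometry} identifies the elements satisfying the transported condition as precisely the nonzero scalar multiples of coisometries; hence $\Phi$ carries scalar multiples of coisometries to scalar multiples of coisometries, in both directions.

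The quantitative heart of the argument is to show $\|\Phi(u)\|=\kappa:=\|\Phi(\1)\|$ for every coisometry $u\in\A$. Here I would invoke Lemma~\ref{u-rspec} to produce a unimodular $\lambda$ for which $u-\lambda\1$ is not right invertible, hence not right symmetric. Applying Lemma~\ref{u-coisometry} to the coisometries $u$ and $\1$ yields $u\perps u-\lambda\1$ and $\1\perps u-\lambda\1$; transporting these through $\Phi$ and passing to ordinary BJ orthogonality (Remark~\ref{rem:strong-implies-usual}), the first relation gives $\|\Phi(u)\|\le\|\Phi(\1)\|$ upon testing with the coefficient $-1$, while the second gives $\|\Phi(\1)\|\le\|\Phi(u)\|$ upon testing with $\lambda^{-1}$, so $\|\Phi(u)\|=\kappa$. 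Setting $\Psi=\kappa^{-1}\Phi$, this makes every coisometry of $\A$ into a coisometry of $\B$; and because $\Phi^{-1}$ sends each coisometry $v$ of $\B$ to a scalar multiple of a coisometry $w_0$, the equality $\|\Phi(w_0)\|=\kappa$ forces $\Psi(w_0)=v$, so $\Psi$ maps coisometries onto coisometries in both directions. In particular $\|\Psi(w)\|=1$ for every unitary $w\in\A$ and $\|\Psi^{-1}(v)\|=1$ for every unitary $v\in\B$.

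Finally I would deploy the Russo--Dye theorem: each element of the open unit ball of $\A$ is a convex combination of unitaries, so linearity together with $\|\Psi(w)\|=1$ on unitaries forces $\|\Psi(x)\|\le\|x\|$ for all $x$, via the homogeneity rescaling $x\mapsto r\,x/\|x\|$ with $r\uparrow 1$. Applying the identical reasoning to the linear strong BJ isomorphism $\Psi^{-1}$ yields the reverse inequality, whence $\Psi$ is an isometric isomorphism; Theorem~\ref{linear-isometry} then expresses $\Psi$—and therefore $\Phi=\kappa\Psi$—in the desired form. I expect the norm-rigidity of the middle paragraph to be the main obstacle: the spectral-boundary input of Lemma~\ref{u-rspec} is exactly what converts the merely qualitative preservation of coisometries into the sharp equality $\|\Phi(u)\|=\kappa$, and the bookkeeping needed to run the coisometry correspondence in both directions—so that Russo--Dye can be applied symmetrically to $\Psi$ and $\Psi^{-1}$—is the delicate part of the passage to an isometry.
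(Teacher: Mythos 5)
Your proposal is correct and follows essentially the same route as the paper's own proof: unitality of $\B$ via Proposition~\ref{u-char}, preservation of coisometries via Corollary~\ref{cor:u-rinv} and Lemma~\ref{u-coisometry}, the norm equality $\|\Phi(u)\|=\kappa$ via the spectral input of Lemma~\ref{u-rspec}, the Russo--Dye argument applied symmetrically to $\Psi$ and $\Psi^{-1}$, and finally Theorem~\ref{linear-isometry}. Your explicit appeal to Remark~\ref{rem:strong-implies-usual} when extracting the two norm inequalities is a harmless (indeed clarifying) expansion of a step the paper leaves implicit.
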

%%%%%%%%%%%%%%%%%%%%%%%%%%%%%%%%%%%%%%%%%%%%%%%%%%%%%%%%%%

%%%%%%%%%%%%%%%%%%%%%%%%%%%%%%%%%%%%%%%%%%%%%%%%%%%%%%%%%%

\subsection{Compact operators}

For each unit vector $\xi \in \cH$, let $E_\xi$ be the rank-one projection onto $\mathbb{C}\xi$.

%%%%%%%%%%%%%%%%%%%%%%%%%%%%%%%%%%%%%%%%%%%%%%%%%%%%%%%%%
\begin{remark}

Let $\A$ be a compact $C^*$-algebra. Then, it is $*$-isomorphic to a $C^*$-algebra of the form $(\bigoplus_{\lambda \in \Lambda}  \cK (\cH_\lambda ))_{c_0(\Lambda )}$, which can be viewed as a $C^*$-subalgebra of $\cK (\cH)$, where $\cH = \bigoplus_{\lambda \in \Lambda} \cH_\lambda$ (see, e.g.,  \cite[Theorem 1.4.5]{Arveson}). Let $P_\lambda$ be the projection from $\cH$ onto $\cH_\lambda$ and let $\delta_{\lambda \mu} $ denote the Kronecker~$\delta$. Then, we have $P_\mu (A_\lambda)_{\lambda \in \Lambda} = (\delta_{\lambda \mu} A_\lambda )_{\lambda \in \Lambda} = (A_\lambda )_{\lambda \in \Lambda}P_\mu$ for each $(A_\lambda )_{\lambda \in \Lambda} \in \A$ and each $\mu \in \Lambda$. It follows that $\A \subset \{ P_\lambda :\lambda \in \Lambda \}' \cap \cK (\cH)$. Conversely, if $A \in \{ P_\lambda :\lambda \in \Lambda \}' \cap \cK (\cH)$, then $AP_\lambda = P_\lambda AP_\lambda$ which can be viewed as an element of $\cK (\cH_\lambda )$. Moreover,
  we have  $A = \sum_{\lambda \in \Lambda}AP_\lambda$ in the norm topology --- this easily follows for  rank-one $A$, then for finite-ranks and they   are norm-dense in $\A$.
 Therefore, $\A \supset \{ P_\lambda :\lambda \in \Lambda \}' \cap \cK (\cH)$, that is, $\A =\{ P_\lambda :\lambda \in \Lambda \}' \cap \cK (\cH)$. Consequently, if $\A$ is a compact $C^*$-algebra, then we may assume the following:
\begin{itemize}
\item[(i)] $\A$ acts on a Hilbert space $\cH$.
\item[(ii)] There exists an orthogonal family of projections $(P_\lambda )_{\lambda \in \Lambda} \subset \cB (\cH)$ such that $\sum_{\lambda \in \Lambda}P_\lambda =I$ in the strong-operator topology and
\[
\A = \{ P_\lambda : \lambda \in \Lambda \}' \cap \cK (\cH).
\]
\end{itemize}
In this case, we note that $\A P_\lambda = P_\lambda \A P_\lambda =P_\lambda \cK (\cH)P_\lambda \subset \A$ for each $\lambda \in \Lambda$.
\end{remark}
%%%%%%%%%%%%%%%%%%%%%%%%%%%%%%%%%%%%%%%%%%%%%%%%%%%%%%%%%
 Given $A\in\cB(\cH)$ we let
$$M_A:=\{\xi\in\cH: \|A\xi\|=\|A\|\cdot\|\xi\|\}.$$ Note that for a compact operator $A$ this set is always nonempty.
 Clearly, if $A=0$, then $M_A=\cH$. However, if $\|A\|=1$, then $\xi \in M_A$ means that
\[
\|\xi\|^2 = \|A\xi\|^2 =\langle |A|^2\xi,\xi \rangle \leq \langle |A|\xi,\xi \rangle \leq \|\xi\|^2 .
\]
Hence, $\langle |A|\xi,\xi \rangle =\|\xi\|^2 = \|A\|\|\xi\|^2$, which together with the equality condition for the Cauchy-Schwarz inequality implies that $|A|\xi = \xi$. Since $M_{\gamma A} = M_A$ whenever $\gamma \neq0$, it follows that
\[
M_A = \{ \xi \in \cH : |A|\xi = \|A\|\xi\} = \ker (\|A\|\1-|A|)=\ker (\|A\|^2\1-A^*A).
\]
In particular, $M_A$ is a (possibly trivial) closed subspace of $\cH$.

 Our next aim is to describe the pure states of a general compact $C^*$-algebra and  their connection to strong BJ orthogonality (see Lemma~\ref{sBJ-pure}).  Given a normalized vector $\xi\in\cH$, we let $\omega_\xi$ be the induced vector state on $\cB(\cH)$; it is given by $a\mapsto\langle a\xi,\xi\rangle\in\CC$. In $C^*$-algebra $\cK(\cH)$ these are the only pure states. In general $C^*$-algebra $\A$ of (not necessarily all) compact operators we show first  that $\xi$ must belong to a single block-component of $\cH=\bigoplus \cH_{\lambda}$.
\begin{lemma}\label{cpt-pure-state}

Let $\cH$ be a Hilbert space, let $(P_\lambda )_{\lambda \in \Lambda} \subset \cB (\cH)$ be an orthogonal family of projections with sum $I$, and let $\A = \{ P_\lambda : \lambda \in \Lambda \}' \cap \cK (\cH)$. Then,
\[
\cP (\A) = \{ \omega_\xi : \xi \in \cH,~\|\xi\|=1,~\text{$P_\lambda \xi = \xi$ for some $\lambda \in \Lambda$}\} .
\]
\begin{proof}
Let $\rho \in \cP (\A)$. Since $\A$ is a $C^*$-subalgebra of $\cK (\cH)$, each pure state of $\A$ extends to a pure state of $\cK (\cH)$. Let $\xi \in \cH$ be a unit vector such that $\rho = \omega_\xi$, and let $$\Lambda_\xi= \{ \lambda \in \Lambda : P_\lambda \xi \neq 0\}.$$ It follows from $\sum_{\lambda \in \Lambda} P_\lambda =I$ that $\Lambda_\xi \neq \emptyset$. To show that $\Lambda_\xi$ is a singleton, suppose to the contrary that $\Lambda_\xi$ contains two distinct elements $\lambda,\mu$. In this case, $0 \neq P_\lambda \xi \neq \xi$ and $0<\|P_\lambda \xi\| <1$ by $\|P_\lambda \xi\|^2+\|(I-P_\lambda )\xi\|^2=1$. Set $\zeta =\|P_\lambda \xi\|^{-1}P_\lambda\xi$ and $\eta = \|(I-P_\lambda )\xi\|^{-1}(I-P_\lambda )\xi$. Since $\A \subset  \{ P_\lambda :\lambda \in \Lambda \}'$, we obtain
\begin{align*}
(\|P_\lambda \xi\|^2\omega_\zeta + \|(I-P_\lambda )\xi\|^2\omega_\eta)(A)
&= \langle AP_\lambda \xi ,P_\lambda \xi \rangle + \langle A(I-P_\lambda )\xi,(I-P_\lambda )\xi\rangle \\
&= \langle P_\lambda A\xi,\xi \rangle +\langle (I-P_\lambda )A\xi,\xi \rangle \\
&= \omega_\xi (A)
\end{align*}
for each $A \in \A$, which implies that $\|P_\lambda \xi\|^2\omega_\zeta = \|(I-P_\lambda )\xi\|^2\omega_\eta$. However, this is impossible by $E_\zeta = P_\lambda E_\zeta P_\lambda \in \A$, where $E_\zeta$ is a rank-one projection onto $\mathbb{C}\zeta$.
 Hence, $\Lambda_\xi$ is a singleton $\{\lambda\}$, and then $P_\lambda \xi = \xi$.

Conversely, we assume that $\xi \in \cH$, $\|\xi\|=1$ and $P_\lambda \xi = \xi$ for some $\lambda \in \Lambda$. Then, $E_\xi = P_\lambda E_\xi P_\lambda \in \A$. Suppose that $\rho ,\tau \in \mathcal{S}(\A)$, $t \in (0,1)$ and $\omega_\xi = (1-t)\rho +t\tau$. It follows from $\omega_\xi (E_\xi)=1$ that $\rho (E_\xi)=\tau (E_\xi)=1$, which,  by Lemma~\ref{lem:reduction},
that $\rho (A) = \rho (E_\xi AE_\xi )$ and $\tau (A) = \tau (E_\xi AE_\xi )$ for each $A \in \A$. Since $E_\xi AE_\xi = \omega_\xi (A)E_\xi$, we have $\omega_\xi = \rho = \tau$. Therefore, $\omega_\xi \in \cP (\A)$.
\end{proof}
\end{lemma}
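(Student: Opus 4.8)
The plan is to prove the two set-inclusions separately, and the decisive structural input in both directions is the fact (already recorded in the Remark describing the form of a compact $C^*$-algebra) that $\A$ contains every rank-one projection $E_\zeta$ whose defining vector lies in a single block, i.e.\ satisfies $P_\lambda \zeta = \zeta$ for some $\lambda$. Indeed, such an $E_\zeta$ is compact and commutes with each $P_\mu$, since $P_\mu \zeta = \delta_{\lambda\mu}\zeta$ gives $E_\zeta P_\mu = \delta_{\lambda\mu} E_\zeta = P_\mu E_\zeta$; hence $E_\zeta \in \{P_\mu : \mu \in \Lambda\}' \cap \cK(\cH) = \A$. I would establish the ``$\supseteq$'' inclusion first, as it is the cleaner half.

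For ``$\supseteq$'', fix a unit vector $\xi$ with $P_\lambda \xi = \xi$; then $E_\xi \in \A$ by the observation above. To see that $\omega_\xi$ is pure, suppose $\omega_\xi = (1-t)\rho + t\tau$ with $\rho,\tau \in \mathcal{S}(\A)$ and $t \in (0,1)$. Since $0 \le E_\xi \le \1$ and $\omega_\xi(E_\xi) = 1$, positivity of $\rho$ and $\tau$ forces $\rho(E_\xi) = \tau(E_\xi) = 1$. Applying Lemma~\ref{lem:reduction} with $x = E_\xi$ then gives $\rho(A) = \rho(E_\xi A E_\xi)$ and $\tau(A) = \tau(E_\xi A E_\xi)$ for every $A \in \A$. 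Because $E_\xi A E_\xi = \omega_\xi(A) E_\xi$, I conclude $\rho(A) = \omega_\xi(A)\rho(E_\xi) = \omega_\xi(A)$, and likewise $\tau = \omega_\xi$, so $\omega_\xi$ is an extreme point of the state space, i.e.\ a pure state.

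For ``$\subseteq$'', let $\rho \in \cP(\A)$. Since $\A$ is a $C^*$-subalgebra of $\cK(\cH)$, $\rho$ extends to a pure state of $\cK(\cH)$ by the pure-state extension theorem, and the pure states of $\cK(\cH)$ are exactly the vector states; hence $\rho = \omega_\xi$ for some unit vector $\xi$. It remains to show that $\xi$ sits inside a single block. Put $\Lambda_\xi = \{\lambda \in \Lambda : P_\lambda \xi \neq 0\}$, which is nonempty because $\sum_\lambda P_\lambda = I$. Suppose toward a contradiction that $\Lambda_\xi$ has two distinct members; fixing $\lambda \in \Lambda_\xi$, both $P_\lambda \xi$ and $(I-P_\lambda)\xi$ are nonzero. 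Writing $\zeta$ and $\eta$ for their normalizations and using that each $A \in \A$ commutes with $P_\lambda$, a direct computation yields the convex decomposition $\omega_\xi = \|P_\lambda\xi\|^2 \omega_\zeta + \|(I-P_\lambda)\xi\|^2 \omega_\eta$ on $\A$, whose weights sum to $1$. Purity of $\omega_\xi$ then forces $\omega_\zeta = \omega_\eta = \omega_\xi$ on $\A$. The hard part, and the place where the block structure is exploited, is to derive a contradiction from this: since $\zeta \in P_\lambda\cH$, the projection $E_\zeta$ lies in $\A$, yet $\omega_\zeta(E_\zeta) = 1$ while $\omega_\eta(E_\zeta) = |\langle \eta,\zeta\rangle|^2 = 0$ because $\eta \perp \zeta$. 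This contradicts $\omega_\zeta = \omega_\eta$ on $\A$, so $\Lambda_\xi$ is a singleton $\{\lambda\}$ and $P_\lambda \xi = \xi$.

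I expect the only genuinely delicate point to be the ``$\subseteq$'' direction: one must invoke the pure-state extension theorem to realize $\rho$ as a vector state, and the separation argument then hinges crucially on having the rank-one projections $E_\zeta$ available inside $\A$ — which is precisely what the commutant-intersection description $\A = \{P_\lambda\}' \cap \cK(\cH)$ supplies.
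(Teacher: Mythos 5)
Your proposal is correct and follows essentially the same route as the paper: extend the pure state to $\cK(\cH)$ to realize it as a vector state $\omega_\xi$, show $\Lambda_\xi$ is a singleton via the convex decomposition $\omega_\xi = \|P_\lambda\xi\|^2\omega_\zeta + \|(I-P_\lambda)\xi\|^2\omega_\eta$ and the contradiction obtained by evaluating at $E_\zeta \in \A$, and prove the reverse inclusion using $E_\xi \in \A$, $\rho(E_\xi)=\tau(E_\xi)=1$, and Lemma~\ref{lem:reduction} together with $E_\xi A E_\xi = \omega_\xi(A)E_\xi$. If anything, your phrasing of the purity step (purity forces $\omega_\zeta = \omega_\eta = \omega_\xi$, then $\omega_\zeta(E_\zeta)=1$ while $\omega_\eta(E_\zeta)=0$) is a slightly cleaner rendering of the paper's corresponding assertion, but the underlying argument is identical.
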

%%%%%%%%%%%%%%%%%%%%%%%%%%%%%%%%%%%%%%%%%%%%%%%%%%%%%%%%%
Given a nonzero $x$ in a $C^*$-algebra $\A$, let
$$    \cP_x = \{ \rho \in \cP (\A) : \rho (|x|)=\|x\|\}.$$
%%%%%%%%%%%%%%%%%%%%%%%%%%%%%%%%%%%%%%%%%%%%%%%%%%%%%%%%%
Notice that $ \cP_x= \cP_{|x|}$. It was proven in \cite[Theorem 2.5]{AR14} that $x\perps y$ if and only if there exists a state $\rho$ such that $\rho(x^\ast x)=\|x\|^2$ and $\rho(x^\ast yy^\ast x)=0$. We can now show that, to describe strong BJ orthogonality on  compact $C^*$-algebras,  pure states suffice. Remark that such a description is not possible with the usual BJ. For example, in $\A=\cK(\cH)\oplus\cK(\cH)$, where $\cH$ is two-dimensional, we have $(\1\oplus \1)\perp ((-\1)\oplus \1)$, because there is a state $\phi=1/2(\omega_{\xi\oplus 0}+\omega_{0\oplus \xi})$ with $\phi(\1\oplus \1)=1$ and $\phi((-\1)\oplus \1)=0$; however, there does not exist a pure state $\rho$ with $\rho((-\1)\oplus \1)=0$ (compare with \cite[Theorem 2.2]{AR14}).
\begin{lemma}\label{sBJ-pure}

Let $\A$ be a   $C^*$-algebra, and let $x,y \in \A$. Suppose that $x \neq 0$. Then, $x\perps  y$ if and only if there exists a $\rho \in \cP_{|x^*|}$ such that $\rho (|y^*|)=0$.
\end{lemma}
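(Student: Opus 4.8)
The plan is to reduce to positive elements, prove the two implications separately, and locate the real work in upgrading a norming state to a \emph{pure} one. By Lemma~\ref{abs-polar} we have $x \perps y$ if and only if $|x^*| \perps |y^*|$, while the target condition involves only $|x^*|$ and $|y^*|$ (recall $\cP_{|x^*|} = \{\rho : \rho(|x^*|) = \|x\|\}$ since $|x^*|\ge0$). So, writing $a := |x^*| \geq 0$ and $b := |y^*| \geq 0$ with $a \neq 0$, it suffices to prove that $a \perps b$ holds if and only if there is a pure state $\rho$ with $\rho(a) = \|a\|$ and $\rho(b) = 0$. The basic tool is \cite[Theorem 2.5]{AR14}: $a \perps b$ iff some \emph{state} $\sigma$ satisfies $\sigma(a^2) = \|a\|^2$ and $\sigma(ab^2a) = 0$ (here $bb^*=b^2$).

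For the ``if'' direction, take any state $\rho$ (in particular a pure one) with $\rho(a) = \|a\|$ and $\rho(b) = 0$. Since $0 \le a \le \|a\|\1$, Lemma~\ref{lem:reduction} gives $\rho(a^2) = \|a\|\rho(a) = \|a\|^2$ and $\rho(ab^2a) = \|a\|\rho(b^2)$. From $0 \le b^2 \le \|b\|b$ we get $\rho(b^2) \le \|b\|\rho(b) = 0$, so $\rho(ab^2a) = 0$, and \cite[Theorem 2.5]{AR14} yields $a \perps b$. This direction uses no purity.

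For the ``only if'' direction, \cite[Theorem 2.5]{AR14} supplies a state $\sigma$ with $\sigma(a^2) = \|a\|^2$ and $\sigma(ab^2a) = 0$. I would first normalise it: from $a^2 \le \|a\| a$ we get $\|a\|^2 = \sigma(a^2) \le \|a\|\sigma(a)$, hence $\sigma(a) = \|a\|$; then Lemma~\ref{lem:reduction} gives $0 = \sigma(ab^2a) = \|a\|\sigma(b^2)$, so $\sigma(b^2) = 0$, and Cauchy--Schwarz forces $\sigma(b) = 0$. Extending $\sigma$ to $\widetilde{\A}$, the convex set
\[
F = \{\tau \in \mathcal{S}(\widetilde{\A}) : \tau(\|a\|\1 - a + b) = 0\}
\]
is therefore nonempty.

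The crux, and the step I expect to be the main obstacle, is upgrading this to a pure state. Put $c := \|a\|\1 - a + b$, which is positive since $\|a\|\1 - a \geq 0$ and $b \geq 0$. Then $F = \{\tau : \tau(c) = 0\}$ is a weak-$*$ closed \emph{face} of the weak-$*$ compact convex state space $\mathcal{S}(\widetilde{\A})$: if $\tau = t\tau_1 + (1-t)\tau_2 \in F$ with $t \in (0,1)$, positivity of $c$ forces $\tau_1(c) = \tau_2(c) = 0$. By the Krein--Milman theorem $F$ has an extreme point $\rho$, which is then extreme in $\mathcal{S}(\widetilde{\A})$, i.e.\ $\rho \in \cP(\widetilde{\A})$. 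From $\rho(c) = 0$ and positivity of each summand we read off $\rho(a) = \|a\|$ and $\rho(b) = 0$; since $a \neq 0$ gives $\rho(a) = \|a\| > 0$, the restriction $\rho|_{\A}$ is nonzero, hence a pure state of the ideal $\A$ (as recalled in the Preliminaries). This $\rho|_{\A}$ lies in $\cP_{|x^*|}$ and satisfies $\rho(|y^*|) = 0$, which completes the proof. Notably, the argument requires no compactness hypothesis.
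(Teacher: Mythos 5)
Your proof is correct, and it shares the paper's decisive step---upgrading a norming state to a \emph{pure} one by taking an extreme point (Krein--Milman) of a weak-$*$ compact face of the state space---but it reaches the initial state by a genuinely different and shorter route. The paper first passes to $|x^*| \perp_{BJ} |y^*|$ (plain BJ orthogonality, via Remark~\ref{rem:strong-implies-usual}), invokes James's theorem to get a norm-one supporting functional, symmetrizes it, and then uses the Jordan decomposition $\rho=\rho^+-\rho^-$ to show $\rho^-=0$, i.e.\ that the functional is a state; you instead cite \cite[Theorem 2.5]{AR14} (which the paper quotes immediately before the lemma) to produce a state $\sigma$ with $\sigma(a^2)=\|a\|^2$ and $\sigma(ab^2a)=0$ directly, and convert this to $\sigma(a)=\|a\|$, $\sigma(b)=0$ using $a^2\le\|a\|a$, $b^2\le\|b\|b$ and Lemma~\ref{lem:reduction}, bypassing James's theorem and the Jordan decomposition entirely. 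You are also more explicit than the source at two points: your single positive element $c=\|a\|\1-a+b$ encodes both conditions at once and makes the face property of $F$ a one-line verification (the paper merely asserts its $F$ is a face), and you justify that the pure state of $\widetilde{\A}$ restricts to a pure state of $\A$ because $\rho(a)=\|a\|>0$, whereas the paper hides this in the blanket reduction ``it may be assumed that $\A$ is unital''; for full rigor you could add the standard fact behind your appeal to the Preliminaries, namely that since $\widetilde{\A}/\A\cong\CC$, the only pure state of $\widetilde{\A}$ annihilating the ideal $\A$ is the quotient character, so any other pure state restricts to a (norm-one) pure state of $\A$. Your ``if'' direction is essentially the paper's (Lemma~\ref{lem:reduction} plus Cauchy--Schwarz), and your observation that purity is not needed there matches the paper's parenthetical remark.
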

\begin{proof}
   Suppose that $x \perp_{BJ}^s y$. It may be assumed that $\A$ is unital (see~Remark~\ref{rem:subalgebra}) and $\|x\|=\|y\|=1$.
In view of Lemma~\ref{abs-polar} and Remark~\ref{rem:strong-implies-usual}, $|x^*| \perp_{BJ} |y^*|$, and then,  by James's result~\cite[Theorem~2.1]{James}, there exists a (supporting) functional $\rho \in \A^*$ such that $\|\rho\|=\rho (|x^*|)=1$ and $\rho (|y^*|)=0$. Define the functional $\rho^*$ on $\A$ by $\rho^*(z)=\overline{\rho (z^*)}$. It follows that $\rho^* \in \A^*$, $\|\rho^*\|=\rho^*(|x^*|)=1$ and $\rho^*(|y^*|)=0$. Replacing $\rho$ with $2^{-1}(\rho+\rho^*)$ if necessary, we may assume that $\rho^*=\rho$. In this case, there exists a unique pair of positive functionals $(\rho^+,\rho^-)$ on $\A$ such that $\rho = \rho^+-\rho-$ and $\|\rho\|=\|\rho^+\|+\|\rho^-\|$; see, for example,~\cite[Theorem 4.3.6]{KR97a}. Since
\[
1=\rho (|x^*|)=\rho^+(|x^*|)-\rho^-(|x^*|) \leq \rho^+(|x^*|) \leq \|\rho^+\| \leq \|\rho\|=1,
\]
we obtain $\|\rho^+\|=\|\rho\|=1$ and $\rho^-=0$. Hence, $\rho = \rho^+ \in \mathcal{S}(\A)$. Now, let $F=\{ \tau \in \mathcal{S}(\A) : \tau (|x^*|)=1,~\tau(|y^*|)=0\}$. Then, $F$ is a nonempty weakly$^*$ compact face of $\mathcal{S}(\A)$. By the Krein-Milman theorem, we have an extreme point $\tau$ of $F$,  which has the desired property.

Conversely, suppose that $\rho (|y^*|)=0$ for some $\rho \in \mathcal{P}_{|x^*|}$. (In fact, $\rho \in \mathcal{S}(\A)$ is sufficient.) It follows from $|y^*|^2 = |y^*|^{1/2}|y^*||y^*|^{1/2} \leq \|y\|\cdot|y^*|$ that $\rho (|y^*|^2)=0$. Then, by Lemma~\ref{lem:reduction}
\[|\rho (|x^*||y^*|\cdot z)| = \|x\||\rho (|y^*|z)| \leq \|x\|\rho (|z|^2)^{1/2}\rho(|y^*|^2)^{1/2}=0
\]
for each $z \in \A$. Thus, $|x^*|\perp_{BJ}^s|y^*|$ and so $x \perp_{BJ}^s y$.
\end{proof}

%%%%%%%%%%%%%%%%%%%%%%%%%%%%%%%%%%%%%%%%%%%%%%%%%%%%%%%%%
Let $\A$ be a $C^*$-algebra. Define the incoming and outgoing neighborhood of $x\in\A$ by
$$L_x^s = \{ y \in \A :y\perps  x\}\quad\hbox{  and }\quad R_x^s = \{ y \in \A : x\perps  y\}.$$   Define an equivalence relation on $\A$ by declaring that $x \sim y$ if $L_x^s = L_y^s$ and $R_x^s =R_y^s$.
%%%%%%%%%%%%%%%%%%%%%%%%%%%%%%%%%%%%%%%%%%%%%%%%%%%%%%%%%
\begin{corollary}\label{pure-Rset}

Let $\A$ be a  $C^*$-algebra, and let $x,y \in \A \setminus \{0\}$. If $\cP_{x^{*}} \subset \cP_{y^{*}}$, then $R_x^s \subset R_y^s$.
\begin{proof}
Suppose that $z \in R_x^s$. Then,  there exists a $\rho \in \cP_{|x^*|}=\cP_{x^{*}}$ such that $\rho (|z^*|)=0$. Since $\cP_{x^{*}} \subset \cP_{y^\ast}$, it follows that $y\perps  z$. Hence, $R_x^s \subset R_y^s$.
\end{proof}
\end{corollary}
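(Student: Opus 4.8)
The plan is to read off the conclusion directly from the pure-state characterization of strong BJ orthogonality established in Lemma~\ref{sBJ-pure}. That lemma asserts that, for nonzero $x$, one has $x \perps z$ precisely when some pure state lying in $\cP_{|x^*|}$ annihilates $|z^*|$. The first observation I would make is that $\cP_{|x^*|} = \cP_{x^*}$: this is the identity $\cP_a = \cP_{|a|}$ recorded immediately after the definition of $\cP_x$, applied to $a = x^*$. Thus the pool of witnesses certifying membership in $R_x^s$ is governed entirely by the pure-state set $\cP_{x^*}$, and the hypothesis $\cP_{x^*} \subset \cP_{y^*}$ is tailored exactly to enlarge that pool.

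With this in hand the argument is a one-line transfer. I would fix $z \in R_x^s$, so that $x \perps z$; by Lemma~\ref{sBJ-pure} there is a pure state $\rho \in \cP_{|x^*|} = \cP_{x^*}$ with $\rho(|z^*|) = 0$. The inclusion $\cP_{x^*} \subset \cP_{y^*}$ then places the same $\rho$ in $\cP_{y^*} = \cP_{|y^*|}$, still satisfying $\rho(|z^*|) = 0$. Applying Lemma~\ref{sBJ-pure} in the reverse direction, now with $y$ in the role of $x$, yields $y \perps z$, that is, $z \in R_y^s$. Since $z$ was arbitrary, $R_x^s \subset R_y^s$.

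I do not anticipate any genuine obstacle: all of the analytic content---the passage from the strong BJ relation to a single vanishing pure state and back---has already been absorbed into Lemma~\ref{sBJ-pure}. The only point requiring minimal care is the bookkeeping between $\cP_{x^*}$ and $\cP_{|x^*|}$ (and likewise between $\cP_{y^*}$ and $\cP_{|y^*|}$), which is immediate from $\cP_a = \cP_{|a|}$. The corollary is therefore essentially a restatement of Lemma~\ref{sBJ-pure} phrased in terms of the neighborhood sets $R_x^s$, expressing the natural monotonicity one would expect: enlarging the set of norm-attaining pure states for $y^*$ can only enlarge the collection of elements to which $y$ is strongly BJ orthogonal.
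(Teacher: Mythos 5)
Your proposal is correct and follows essentially the same route as the paper's own proof: both extract from Lemma~\ref{sBJ-pure} a pure state $\rho \in \cP_{|x^*|} = \cP_{x^*}$ with $\rho(|z^*|)=0$, transport it via the hypothesis $\cP_{x^*} \subset \cP_{y^*} = \cP_{|y^*|}$, and apply the lemma in the converse direction to get $y \perps z$. You merely make explicit the bookkeeping identity $\cP_a = \cP_{|a|}$ that the paper uses silently, which is a harmless elaboration rather than a deviation.
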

%%%%%%%%%%%%%%%%%%%%%%%%%%%%%%%%%%%%%%%%%%%%%%%%%%%%%%%%%

%%%%%%%%%%%%%%%%%%%%%%%%%%%%%%%%%%%%%%%%%%%%%%%%%%%%%%%%%
\begin{lemma}\label{Rset-cpt}

Let $\cH$ be a Hilbert space, let $(P_\lambda )_{\lambda \in \Lambda} \subset \cB (\cH)$ be an orthogonal family of projections with sum $I$, and let $\A = \{ P_\lambda : \lambda \in \Lambda \}' \cap \cK (\cH)$. Suppose that $A,B \in \A \setminus \{0\}$. Then, the following are equivalent:
\begin{itemize}
\item[{\rm (i)}] $R_A^s \subset R_B^s$.
\item[{\rm (ii)}] $M_{|A^*|} \subset M_{|B^*|}$.
\item[{\rm (iii)}] $\cP_{|A^*|} \subset \cP_{|B^*|}$.
\end{itemize}
\begin{proof}
By Lemma~\ref{abs-polar}
   $R_A^s = R_{|A^*|}^s$ and $R_B^s = R_{|B^*|}^s$, so it may be assumed that $A=|A^*|\geq 0$ and $B =|B^*|\geq 0$. We may also assume that $\|A\|=\|B\|=1$.

(i) $\Rightarrow$ (ii).
Suppose that $M_A\not \subset M_B$. Let $\xi$ be a unit vector in $M_A \setminus M_B$, and let $\Lambda_\xi = \{ \lambda \in \Lambda : P_\lambda \xi \neq 0\}$. It follows from
\[
\xi = A\xi = \sum_{\lambda \in \Lambda}AP_\lambda \xi = \sum_{\lambda \in \Lambda_\xi}AP_\lambda \xi
\]
that
\[
1 = \sum_{\lambda \in \Lambda_\xi}\|AP_\lambda \xi\|^2 \leq \sum_{\lambda \in \Lambda_\xi} \|P_\lambda \xi\|^2 \leq \|\xi\|^2 = 1 ,
\]
which implies that $\|AP_\lambda \xi\|=\|P_\lambda \xi\|$ for each $\lambda \in \Lambda_\xi$, that is, $P_\lambda \xi \in M_A$ for each $\lambda \in \Lambda_\xi$. Meanwhile, since $\xi \not \in M_B$, we obtain
\[
1>\|B\xi\|^2 = \sum_{\lambda \in \Lambda} \|BP_\lambda \xi\|^2 = \sum_{\lambda \in \Lambda_\xi}\|BP_\lambda \xi\|^2 .
\]
From this, we infer that $\|BP_\lambda \xi\| < \|P_\lambda \xi\|$ for some $\lambda \in \Lambda_\xi$; otherwise, $BP_\lambda \xi=P_\lambda \xi$ for each $\lambda \in \Lambda_\xi$
 and
\[
B\xi = \sum_{\lambda \in \Lambda}BP_\lambda\xi = \sum_{\lambda \in \Lambda_\xi}BP_\lambda \xi = \sum_{\lambda \in \Lambda_\xi}P_\lambda\xi = \sum_{\lambda \in \Lambda}P_\lambda\xi =\xi ,
\]
which contradicts $\xi \not \in M_B$. Hence, $P_{\lambda_0} \xi \in M_A \setminus M_B$ for some $\lambda_0 \in \Lambda_\xi$. Replacing $\xi$ with $\|P_{\lambda_0} \xi\|^{-1}P_{\lambda_0} \xi$ if necessary, we may assume that $P_{\lambda_0} \xi =\xi$.

Now, set $\xi = \zeta_{\mu_0}$, and let $(\zeta_\mu)_{\mu \in M}$ be an orthonormal basis for $\cH$ such that $\zeta_\mu \in \bigcup_{\lambda \in \Lambda}P_\lambda(\cH)$ for each $\mu \in M$. Then, the net of projections $(E_J)_J$ directed by all finite subsets $J$ of $M$ with respect to inclusion relation forms an (increasing) approximate unit for $\cK (\cH)$, where $E_J$ is the projection onto $\langle \{ \zeta_\mu :\mu \in J\}\rangle$. Here, we note that $E_J = \sum_{\mu \in J}E_{\zeta_\mu} \in \A$ for each finite subset $J$ of $M$. Let $J$ be a finite subset of $M$ such that $\mu_0 \in J$ and $\|B-E_JB\|< 1-\|B\xi\|$,
 where $\|B\xi\|^2 = \langle B^2\xi,\xi \rangle \leq \langle B\xi,\xi \rangle <1$ by $\xi \not \in M_B$. Since $\langle A\xi,\xi \rangle = 1 =\|A\|$, $E_\xi=E_{\{\mu_0\}} \leq E_J$ and $\langle (E_J-E_\xi )\xi,\xi \rangle = 0$, it follows by Lemma~\ref{sBJ-pure} that $A\perps  E_J-E_\xi$. Meanwhile, we note that $E_\xi B^2E_\xi = \omega_\xi (B^2)E_\xi = \|B\xi\|^2E_\xi$, which implies that
\[
\|E_\xi B\| = \|BE_\xi \| = \|E_\xi B^2E_\xi \|^{1/2} = \|B\xi\| .
\]
Therefore, $B \not\perps  E_J-E_\xi$ by
\[
\|B-(E_J-E_\xi )B\| \leq \|B-E_JB\|+\|E_\xi B\|<1=\|B\| .
\]
Hence, $E_J -E_\xi \in R_A^s \setminus R_B^s$ which  proves (i) $\Rightarrow$ (ii).

(ii) $\Rightarrow$ (iii). Let $\omega_\xi \in \cP_A$. Then, $\langle A\xi,\xi \rangle =1$, which implies that $A\xi = \xi$. Hence, $\xi \in M_A \subset M_B$, in which case, $B\xi = \xi$ and $\omega_\xi (B) = 1$. This shows that $\omega_\xi \in \cP_B$. Thus, (ii) $\Rightarrow$ (iii).

The implication (iii) $\Rightarrow$ (i) follows from Corollary~\ref{pure-Rset}.
\end{proof}
\end{lemma}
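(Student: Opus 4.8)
The plan is to establish the cycle of implications (i) $\Rightarrow$ (ii) $\Rightarrow$ (iii) $\Rightarrow$ (i). Before starting I would normalize the problem: by Lemma~\ref{abs-polar} one has $R_A^s = R_{|A^*|}^s$ and $R_B^s = R_{|B^*|}^s$, while $M_{|A^*|}$ and $\cP_{|A^*|}$ depend by definition only on $|A^*|$. Replacing $A$ and $B$ by $|A^*|$ and $|B^*|$ and rescaling, I may therefore assume throughout that $A,B\geq 0$ and $\|A\|=\|B\|=1$. For such positive contractions the displayed computation of $M_A$ preceding Lemma~\ref{sBJ-pure} reads $M_A = \ker(\1-A) = \{\xi : A\xi=\xi\}$, and $\cP_A = \{\rho \in \cP(\A) : \rho(A)=1\}$.

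The implication (iii) $\Rightarrow$ (i) is immediate from Corollary~\ref{pure-Rset}. For (ii) $\Rightarrow$ (iii) I would invoke the pure-state description in Lemma~\ref{cpt-pure-state}: every $\rho \in \cP_A$ has the form $\omega_\xi$ for a unit vector $\xi$ supported in a single block $P_\lambda(\cH)$, and $\omega_\xi(A)=\langle A\xi,\xi\rangle=1=\|A\|$ together with $0\leq A\leq\1$ forces $(\1-A)^{1/2}\xi=0$, hence $A\xi=\xi$, i.e. $\xi\in M_A$. Then $M_A\subset M_B$ gives $B\xi=\xi$, so $\omega_\xi(B)=1$ and $\omega_\xi\in\cP_B$. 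Both directions are routine.

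The hard direction is (i) $\Rightarrow$ (ii), which I would argue by contraposition: assuming $M_A\not\subset M_B$, I must manufacture $C\in R_A^s\setminus R_B^s$, i.e. with $A\perps C$ but $B\not\perps C$. Starting from any unit vector $\xi\in M_A\setminus M_B$, the first step is to replace it by a vector living in a single block. Writing $\xi=\sum_{\lambda\in\Lambda_\xi}P_\lambda\xi$ and using that $A$ commutes with each $P_\lambda$, the chain $1=\|A\xi\|^2=\sum_\lambda\|AP_\lambda\xi\|^2\leq\sum_\lambda\|P_\lambda\xi\|^2=1$ forces $\|AP_\lambda\xi\|=\|P_\lambda\xi\|$, so each $P_\lambda\xi\in M_A$; since $\xi\notin M_B$, not all block-pieces can lie in $M_B$, whence some $P_{\lambda_0}\xi\in M_A\setminus M_B$. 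After renormalizing I may thus assume $P_{\lambda_0}\xi=\xi$, so the rank-one projection $E_\xi=P_{\lambda_0}E_\xi P_{\lambda_0}$ belongs to $\A$.

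The remaining and most delicate step is the construction and verification of the separator $C$. Choosing an orthonormal basis of $\cH$ adapted to the blocks and containing $\xi$, the associated finite-rank cut-off projections $E_J$ lie in $\A$ and form an approximate unit, so I can fix a finite $J$ with $\xi$ in its range and $\|B-E_JB\|<1-\|B\xi\|$ (note $\|B\xi\|<1$ since $\xi\notin M_B$). I then take $C=E_J-E_\xi\in\A$, which is a projection. Since $\omega_\xi\in\cP_A$ and $\omega_\xi(|C^*|)=\omega_\xi(C)=\langle(E_J-E_\xi)\xi,\xi\rangle=1-1=0$, Lemma~\ref{sBJ-pure} yields $A\perps C$. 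On the other hand $\|E_\xi B\|=\|E_\xi B^2E_\xi\|^{1/2}=\|B\xi\|$, so the triangle inequality gives $\|B-CB\|\leq\|B-E_JB\|+\|E_\xi B\|<(1-\|B\xi\|)+\|B\xi\|=1=\|B\|$, witnessing $B\not\perps C$ (take $z=-B$ in the definition). Thus $C\in R_A^s\setminus R_B^s$, completing the contrapositive. I expect the block-reduction and the calibration of $J$ balancing $\|B-E_JB\|$ against $\|B\xi\|$ to be the crux; everything else rests on the pure-state machinery already in place.
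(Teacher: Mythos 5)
Your proposal is correct and takes essentially the same route as the paper's proof: the identical normalization via Lemma~\ref{abs-polar}, the same single-block reduction of a unit vector $\xi \in M_A \setminus M_B$ (using $\sum_\lambda\|AP_\lambda\xi\|^2 = \sum_\lambda\|P_\lambda\xi\|^2$ to get $P_\lambda\xi \in M_A$ and the contradiction $B\xi=\xi$ otherwise), the same separator $C = E_J - E_\xi$ calibrated by $\|B - E_JB\| < 1 - \|B\xi\|$, and the same handling of (ii)$\Rightarrow$(iii) and (iii)$\Rightarrow$(i) via the pure-state machinery and Corollary~\ref{pure-Rset}. The only differences are cosmetic: you make explicit a few choices the paper leaves implicit, such as taking $z=-B$ to witness $B \not\perps C$.
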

%%%%%%%%%%%%%%%%%%%%%%%%%%%%%%%%%%%%%%%%%%%%%%%%%%%%%%%%%
Given a nonzero $x\in\A$, let $$\cN_x = \{ \rho \in \cP(\A): \rho (|x|)=0\}.$$

%%%%%%%%%%%%%%%%%%%%%%%%%%%%%%%%%%%%%%%%%%%%%%%%%%%%%%%%%
\begin{lemma}\label{Lset-pure}

 Let $\A$ be a  $C^*$-algebra, and let $x,y \in \A \setminus \{0\}$. If $\cN_{|x^*|} \subset \cN_{|y^*|}$, then $L_x^s \subset L_y^s$.
\begin{proof}
Let $z \in L_x^s \setminus \{0\}$. Then, $\rho (|x^*|)=0$ for some $\rho \in \cP_{|z^*|}$. Since $\rho \in \cN_{|x^*|} \subset \cN_{|y^*|}$, it follows that $z \in L_y^s$.
\end{proof}
\end{lemma}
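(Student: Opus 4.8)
The plan is to reduce the whole claim to the pure-state characterization of strong BJ orthogonality furnished by Lemma~\ref{sBJ-pure}, which says that for a nonzero first entry $z$ one has $z \perps w$ precisely when some pure state $\rho \in \cP_{|z^*|}$ satisfies $\rho(|w^*|)=0$. Since the condition $\rho(|w^*|)=0$ is exactly the statement $\rho \in \cN_{|w^*|}$, this turns the memberships $z \in L_x^s$ and $z \in L_y^s$ into the existence of a pure state in $\cP_{|z^*|}$ lying in $\cN_{|x^*|}$, respectively in $\cN_{|y^*|}$. The hypothesis $\cN_{|x^*|} \subset \cN_{|y^*|}$ is then precisely what pushes the first existence statement to the second.

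Concretely, I would first note that $0 \perps x$ always holds, so $0$ lies in both $L_x^s$ and $L_y^s$, and it suffices to argue for $z \in L_x^s \setminus \{0\}$; this restriction is also what licenses Lemma~\ref{sBJ-pure}, whose hypothesis requires the first argument to be nonzero. For such a $z$, the relation $z \perps x$ together with Lemma~\ref{sBJ-pure} (first entry $z$, second entry $x$) produces a pure state $\rho \in \cP_{|z^*|}$ with $\rho(|x^*|)=0$, that is, $\rho \in \cN_{|x^*|}$. By the hypothesis this same $\rho$ also annihilates $|y^*|$, so $\rho \in \cP_{|z^*|}$ and $\rho(|y^*|)=0$; a second application of Lemma~\ref{sBJ-pure}, now with second entry $y$, yields $z \perps y$, i.e.\ $z \in L_y^s$. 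This establishes $L_x^s \subset L_y^s$.

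No genuine difficulty arises here, since all the analytic content has already been absorbed into Lemma~\ref{sBJ-pure}: the argument is the exact mirror image of Corollary~\ref{pure-Rset}, with the outgoing neighborhoods $R^s$ and the norm-attaining families $\cP$ replaced by the incoming neighborhoods $L^s$ and the annihilator families $\cN$. The only point deserving any care is the separation of the zero vector at the outset, needed so that the pure-state characterization applies to the first argument.
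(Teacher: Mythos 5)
Your proposal is correct and coincides with the paper's own proof: both apply Lemma~\ref{sBJ-pure} twice (once to extract $\rho\in\cP_{|z^*|}$ with $\rho(|x^*|)=0$, once to conclude $z\perps y$ after pushing $\rho$ through $\cN_{|x^*|}\subset\cN_{|y^*|}$), exploiting that $\rho(|x^*|)=0$ is exactly membership in $\cN_{|x^*|}$ since $|x^*|\geq 0$. Your explicit handling of $z=0$ (needed because Lemma~\ref{sBJ-pure} requires a nonzero first argument) is a minor point the paper leaves implicit, and is a welcome bit of care rather than a deviation.
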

%%%%%%%%%%%%%%%%%%%%%%%%%%%%%%%%%%%%%%%%%%%%%%%%%%%%%%%%%

%%%%%%%%%%%%%%%%%%%%%%%%%%%%%%%%%%%%%%%%%%%%%%%%%%%%%%%%%
\begin{lemma}\label{Lset-cpt}

Let $\cH$ be a Hilbert space, let $(P_\lambda )_{\lambda \in \Lambda} \subset \cB (\cH)$ be an orthogonal family of projections with sum $I$, and let $\A = \{ P_\lambda : \lambda \in \Lambda \}' \cap \cK (\cH)$. Suppose that $A,B \in \A \setminus \{0\}$. Then, the following are equivalent:
\begin{itemize}
\item[{\rm (i)}] $L_A^s \subset L_B^s$.
\item[{\rm (ii)}] $\ker |A^*| \subset \ker |B^*|$.
\item[{\rm (iii)}] $\cN_{|A^*|} \subset \cN_{|B^*|}$.
\end{itemize}
\begin{proof}
As in the proof of Lemma~\ref{Rset-cpt}, it may be assumed that $A=|A^*| \geq 0$, $B =|B^*|\geq 0$ and $\|A\|=\|B\|=1$. Suppose that $\ker A \not \subset \ker B$. Let $\xi$ be a unit vector in $\cH$ such that $\xi \in \ker A \setminus \ker B$. Since $A\xi = \sum_{\lambda \in \Lambda}AP_\lambda \xi$ and $B\xi = \sum_{\lambda \in \Lambda}BP_\lambda\xi$, we infer that $AP_\lambda \xi=0$ for each $\lambda \in \Lambda$ but $BP_{\lambda_0} \neq 0$ for some $\lambda_0 \in \Lambda$. Replacing $\xi$ with $\|P_{\lambda_0}\xi\|^{-1}P_{\lambda_0}\xi$ if necessary, we may assume that $P_{\lambda_0}\xi =\xi$, in which case, $E_\xi \in \A$.  By Lemma~\ref{sBJ-pure},
 $E_\xi\perps  A$. Meanwhile, if $\omega_\zeta \in\cP_{E_{\xi}}$, then $\langle E_\xi \zeta,\zeta\rangle =1$, which implies that $\zeta = E_\xi \zeta = \langle \zeta ,\xi\rangle \xi$ and $|\langle \zeta ,\xi \rangle |=1$. Hence,
\[
\omega_\zeta (B) = \omega_\xi (B) \geq \omega_\xi (B^2) = \|B\xi\|^2 >0
\]
by $\xi \not \in \ker B$. This means $E_\xi \not\perps  B$, that is, $E_\xi \in L_A^s \setminus L_B^s$. Therefore, (i) $\Rightarrow$ (ii).

Next, suppose that $\ker A \subset \ker B$. Let $\xi$ be a unit vector in $\cH$ such that $\omega_\xi \in \cN_A$. Then, we obtain $\|A^{1/2}\xi\|^2 = \omega_\xi (A) = 0$ and $A\xi = 0$, which implies that $\xi \in \ker A \subset \ker B$. It follows that $\omega_\xi (B) = 0$. Thus, $\cN_A \subset \cN_B$, and (ii) $\Rightarrow$ (iii).

The implication (iii) $\Rightarrow$ (i) follows from Lemma~\ref{Lset-pure}.
\end{proof}
\end{lemma}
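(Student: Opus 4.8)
The plan is to prove Lemma~\ref{Lset-cpt} by establishing the cycle of implications (i) $\Rightarrow$ (ii) $\Rightarrow$ (iii) $\Rightarrow$ (i), exactly paralleling the structure of the proof of Lemma~\ref{Rset-cpt}. The last implication is already free of charge: (iii) $\Rightarrow$ (i) is immediate from Lemma~\ref{Lset-pure}, since $\cN_{|A^*|} \subset \cN_{|B^*|}$ gives $L_A^s \subset L_B^s$. So the real work is the contrapositive direction (i) $\Rightarrow$ (ii) and the straightforward inclusion (ii) $\Rightarrow$ (iii). As in Lemma~\ref{Rset-cpt}, I would begin by invoking Lemma~\ref{abs-polar} to reduce to the positive case $A = |A^*| \geq 0$, $B = |B^*| \geq 0$, normalized so $\|A\| = \|B\| = 1$; note that $\ker|A^*| = \ker A$ after this reduction.

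\textbf{The implication (i) $\Rightarrow$ (ii)} I would prove contrapositively: assume $\ker A \not\subset \ker B$ and exhibit an element of $L_A^s \setminus L_B^s$. Pick a unit vector $\xi \in \ker A \setminus \ker B$. The key localization step uses the block structure $\A = \{P_\lambda\}' \cap \cK(\cH)$: writing $A\xi = \sum_\lambda A P_\lambda \xi$ and $B\xi = \sum_\lambda B P_\lambda \xi$, the hypothesis $A\xi = 0$ forces $A P_\lambda \xi = 0$ for every $\lambda$ (the summands live in orthogonal blocks), while $B\xi \neq 0$ gives $B P_{\lambda_0}\xi \neq 0$ for some $\lambda_0$. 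Replacing $\xi$ by the normalization of $P_{\lambda_0}\xi$, I may assume $P_{\lambda_0}\xi = \xi$, which guarantees the rank-one projection $E_\xi = P_{\lambda_0} E_\xi P_{\lambda_0}$ lies in $\A$. Now $\omega_\xi \in \cP_{|E_\xi^*|}$ with $\omega_\xi(A) = \langle A\xi,\xi\rangle = 0$, so Lemma~\ref{sBJ-pure} yields $E_\xi \perps A$, i.e. $E_\xi \in L_A^s$. To see $E_\xi \notin L_B^s$, I observe that every pure state in $\cP_{|E_\xi^*|}$ is forced to be $\omega_\xi$ itself: if $\omega_\zeta \in \cP_{E_\xi}$ then $\langle E_\xi\zeta,\zeta\rangle = 1$ forces $\zeta = E_\xi\zeta$, a unimodular multiple of $\xi$, whence $\omega_\zeta(B) = \omega_\xi(B) \geq \omega_\xi(B^2) = \|B\xi\|^2 > 0$. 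Since no pure state peaking on $E_\xi$ annihilates $B$, Lemma~\ref{sBJ-pure} gives $E_\xi \not\perps B$, so $E_\xi \in L_A^s \setminus L_B^s$.

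\textbf{The implication (ii) $\Rightarrow$ (iii)} is routine: take $\omega_\xi \in \cN_A$ with $\xi$ a unit vector, so $\|A^{1/2}\xi\|^2 = \omega_\xi(A) = 0$ forces $A\xi = 0$, i.e. $\xi \in \ker A \subset \ker B$ by hypothesis; then $\omega_\xi(B) = \langle B\xi,\xi\rangle = 0$, so $\omega_\xi \in \cN_B$. (Here I am using that every pure state of $\A$ is of the form $\omega_\xi$ by Lemma~\ref{cpt-pure-state}.) This completes the cycle.

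\textbf{The main obstacle} is the same subtlety that drives the analogous step in Lemma~\ref{Rset-cpt}: ensuring that the witnessing vector can be arranged to lie entirely inside a single block $P_{\lambda_0}(\cH)$, so that the rank-one projection $E_\xi$ actually belongs to $\A$ (rather than merely to $\cK(\cH)$). The block-diagonal argument handles this cleanly, but one must be careful that collapsing $\xi$ to $P_{\lambda_0}\xi$ preserves membership in $\ker A \setminus \ker B$; this follows because $A P_\lambda \xi = 0$ for all $\lambda$ while $B P_{\lambda_0}\xi \neq 0$. Compared with Lemma~\ref{Rset-cpt}, the present argument is actually lighter, since kernels behave more transparently than the norm-attaining sets $M_{|A^*|}$ and no approximate-unit truncation of $B$ is required.
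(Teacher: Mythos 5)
Your proposal is correct and follows essentially the same route as the paper's own proof: the same reduction via Lemma~\ref{abs-polar} to positive normalized $A,B$, the same contrapositive block-localization argument producing $E_\xi \in L_A^s \setminus L_B^s$ with $E_\xi\perps A$ via Lemma~\ref{sBJ-pure} and $E_\xi \not\perps B$ via $\omega_\xi(B) \geq \|B\xi\|^2 > 0$, the same routine (ii) $\Rightarrow$ (iii) step, and the same appeal to Lemma~\ref{Lset-pure} for (iii) $\Rightarrow$ (i). Your closing observation that the renormalized vector $P_{\lambda_0}\xi$ stays in $\ker A \setminus \ker B$ is exactly the point the paper's proof relies on implicitly.
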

%%%%%%%%%%%%%%%%%%%%%%%%%%%%%%%%%%%%%%%%%%%%%%%%%%%%%%%%%
\begin{remark}\label{rem:L_a-vs-Rng(A)}

Let $\cH$ be a Hilbert space, let $(P_\lambda )_{\lambda \in \Lambda} \subset \cB (\cH)$ be an orthogonal family of projections with sum $I$, and let $\A = \{ P_\lambda : \lambda \in \Lambda \}' \cap \cK (\cH)$. Suppose that $A,B \in \cK (\cH) \setminus \{0\}$. Then, $\ker A = A^*(\cH)^\perp$. Hence, by the preceding lemma, $L_A^s \subset L_B^s$ if and only if $\overline{|A^*|(\cH)} \supset \overline{|B^*|(\cH)}$ provided that $A,B \in \A$.
\end{remark}
%%%%%%%%%%%%%%%%%%%%%%%%%%%%%%%%%%%%%%%%%%%%%%%%%%%%%%%%%

%%%%%%%%%%%%%%%%%%%%%%%%%%%%%%%%%%%%%%%%%%%%%%%%%%%%%%%%%
\begin{lemma}\label{proj-cpt}

Let $\cH$ be a Hilbert space, let $(P_\lambda )_{\lambda \in \Lambda} \subset \cB (\cH)$ be an orthogonal family of projections with sum $I$, and let $\A = \{ P_\lambda : \lambda \in \Lambda \}' \cap \cK (\cH)$. Suppose that $A \in \A \setminus \{0\}$. Then, $\|A\|^{-1}|A^*|$ is a projection if and only if $B \in \A$ and $R_A^s = R_B^s$ imply $L_A^s \supset L_B^s$.
\begin{proof}
It may be assumed that $A =|A^\ast|\geq 0$ and $\|A\|=1$. Suppose that $A$ is a projection. In this case, we have $\overline{A(\cH)} =A(\cH) =M_A$. Let $B \in \A$ be such that $R_A^s = R_B^s$. Then, we obtain $B \neq 0$ and, by Lemma~\ref{Rset-cpt}, $\overline{A(\cH)} =M_A = M_{|B^*|} \subset \overline{|B^*|(\cH)}$, which implies that $L_A^s \supset L_B^s$.

Conversely, suppose that $B \in \A$ and $R_A^s = R_B^s$ imply $L_A^s \supset L_B^s$. We infer that $M_A$ is finite-dimensional by $A \in \cK (\cH)$. Let $E$ be the projection onto $M_A$. We note that $E =\chi_{\{1\}}(A) \in \A$ and, from $M_A = E(\cH) = M_E$ and Lemma~\ref{Rset-cpt}, that $R_A^s = R_E^s$. Hence, by the assumptions, $L_A^s \supset L_E^s$ which gives  $M_A \subset A(\cH) \subset \overline{A(\cH)} \subset E(\cH) = M_A$. Therefore, $M_A = A(\cH)$ and $A(A\xi) = A\xi$ for each $\xi \in \cH$. Combining this with the assumption that $A \geq 0$, we conclude that $A$ is a projection.
\end{proof}
\end{lemma}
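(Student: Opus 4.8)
The plan is to translate everything into Hilbert-space geometry by means of the two preceding classification lemmas and then read off the projection condition as the coincidence of the range of $A$ with its maximal-vector set $M_A$. Since the statement ``$\|A\|^{-1}|A^*|$ is a projection'' and all of $R_A^s$, $L_A^s$ depend only on $|A^*|$ (Lemma~\ref{abs-polar}), I would first normalize, replacing $A$ by $|A^*|$ and rescaling so that $A\ge 0$ and $\|A\|=1$; the goal then becomes to show that the orthogonality condition is equivalent to $A^2=A$. The governing dictionary is this: by Lemma~\ref{Rset-cpt}, an inclusion of outgoing neighborhoods $R_A^s\subset R_B^s$ is the same as $M_A\subset M_{|B^*|}$, while by Lemma~\ref{Lset-cpt} together with Remark~\ref{rem:L_a-vs-Rng(A)}, an inclusion $L_B^s\subset L_A^s$ is the same as $\overline{A(\cH)}\subset\overline{|B^*|(\cH)}$.

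For the forward implication, suppose $A$ is a projection, so that $M_A=A(\cH)=\overline{A(\cH)}$ is closed. Given any $B\in\A$ with $R_A^s=R_B^s$, the dictionary gives $M_A=M_{|B^*|}$; and since the maximal-vector set of a nonzero positive compact operator consists of eigenvectors for the top eigenvalue, each such vector lies in the range, whence $M_{|B^*|}\subset\overline{|B^*|(\cH)}$. Chaining these, $\overline{A(\cH)}=M_A\subset\overline{|B^*|(\cH)}$, which by the dictionary is exactly $L_A^s\supset L_B^s$. This direction is essentially automatic once the two lemmas are in place.

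The substance lies in the converse. Assuming the implication holds, I would manufacture a single decisive test element. Because $A$ is compact and positive with $\|A\|=1$, the value $1$ is an isolated point of the spectrum and $M_A=\ker(\1-A)$ is finite-dimensional; let $E$ be the orthogonal projection onto $M_A$. The point requiring care is that $E$ genuinely lies in $\A$: since $1$ is isolated, $E=\chi_{\{1\}}(A)$ is obtained by continuous functional calculus with a function vanishing at $0$, hence belongs to the $C^*$-subalgebra generated by $A$ and so to $\A$. Now $M_E=E(\cH)=M_A$, so the dictionary yields $R_A^s=R_E^s$, and the hypothesis forces $L_A^s\supset L_E^s$, i.e. $\overline{A(\cH)}\subset\overline{E(\cH)}=M_A$. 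Combined with the always-valid inclusion $M_A\subset A(\cH)$, this pins down $A(\cH)=M_A=\ker(\1-A)$: every vector in the range is fixed by $A$, so $A^2\xi=A\xi$ for all $\xi$, and a positive operator with $A^2=A$ is a projection.

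I expect the main obstacle to be the converse direction, and specifically the verification that the spectral projection $E$ onto the top eigenspace is available inside $\A$ (so that it is a legitimate test element for the hypothesis), together with the clean geometric identity that, for a positive compact contraction, being a projection is equivalent to the coincidence $A(\cH)=M_A$ of range with maximal-vector set. Once $E\in\A$ is secured, the two translation lemmas do the rest, and no further orthogonality computation is needed.
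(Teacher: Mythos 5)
Your proposal is correct and follows essentially the same route as the paper's own proof: the same normalization $A=|A^*|\ge 0$, $\|A\|=1$, the same dictionary via Lemma~\ref{Rset-cpt} and Remark~\ref{rem:L_a-vs-Rng(A)}, and the same decisive test element $E=\chi_{\{1\}}(A)\in\A$ in the converse, concluding from $A(\cH)=M_A$ that $A^2=A$. The only (trivial) point you leave implicit is that $R_A^s=R_B^s$ with $A\neq 0$ forces $B\neq 0$ before Lemma~\ref{Rset-cpt} can be invoked, which the paper notes explicitly.
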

%%%%%%%%%%%%%%%%%%%%%%%%%%%%%%%%%%%%%%%%%%%%%%%%%%%%%%%%%
For each pair $\xi,\zeta \in \cH$, let $\xi \otimes \zeta$ be a rank-one operator defined by $(\xi\otimes \zeta)\eta = \langle \eta,\zeta\rangle \xi$ for each $\eta \in \cH$. We note that $(\xi \otimes \zeta)^*=\zeta \otimes \xi$ and
\[
(\xi_1\otimes \zeta_1)(\xi_2\otimes\zeta_2) = \langle \zeta_1,\xi_2 \rangle (\xi_1\otimes \zeta_2) =\xi_1 \otimes ((\zeta_2 \otimes \zeta_1)\xi_2).
\]
%%%%%%%%%%%%%%%%%%%%%%%%%%%%%%%%%%%%%%%%%%%%%%%%%%%%%%%%%
\begin{remark}\label{rem:SVD-blockwise}

Let $\cH$ be a Hilbert space, let $(P_\lambda )_{\lambda \in \Lambda} \subset \cB (\cH)$ be an orthogonal family of projections with sum $I$, and let $\A = \{ P_\lambda : \lambda \in \Lambda \}' \cap \cK (\cH)$. Suppose that $\xi,\zeta \in \cH$. Then, $\xi \otimes \zeta \in \A$ if and only if $\xi ,\zeta \in P_\lambda (\cH)$ for some $\lambda \in \Lambda$. To show this, suppose that $\xi \otimes \zeta \in \A \setminus \{0\}$. Let $\lambda ,\mu \in \Lambda$ be such that $P_\lambda \xi \neq 0$ and $P_\mu \zeta \neq 0$. Since $\xi\otimes \zeta \in \A$ and
\[
0 \neq P_\lambda \xi \otimes P_\mu\zeta = P_\lambda (\xi \otimes \zeta )P_\mu = P_\lambda P_\mu (\xi \otimes \zeta) ,
\]
it follows that $\lambda = \mu$. Hence, $P_\lambda \xi = \xi$ and $P_\lambda \zeta =\zeta$ for a unique $\lambda \in \Lambda$. Conversely, if $\xi,\zeta \in P_\lambda (\cH)$ for some $\lambda \in \Lambda$, then $\xi \otimes \zeta =P_\lambda (\xi \otimes \zeta )P_\lambda \in  \cK (\cH_\lambda )\subseteq\A$.
\end{remark}
%%%%%%%%%%%%%%%%%%%%%%%%%%%%%%%%%%%%%%%%%%%%%%%%%%%%%%%%%
\begin{remark}

Let $\cH$ be a Hilbert space, let $(P_\lambda )_{\lambda \in \Lambda} \subset \cB (\cH)$ be an orthogonal family of projections with sum $I$, and let $\A = \{ P_\lambda : \lambda \in \Lambda \}' \cap \cK (\cH)$. Suppose that $E \in \A$ is a projection. Then, there exists an orthogonal family of rank-one projections $E_{\xi_1},\ldots ,E_{\xi_n} \in \A$ such that $E = \sum_{j=1}^n E_{\xi_n}$.   To show this, we note that $\rank E = n \in \mathbb{N}$ by $E \in \cK (\cH)$. Let $\Lambda_E = \{ \lambda \in \Lambda : EP_\lambda \neq 0\}$, and let $\lambda \in \Lambda_E$. Since $EP_\lambda =P_\lambda E$, it follows that $EP_\lambda$ is a nonzero projection. Moreover, if $\{\xi_{\lambda ,1},\ldots ,\xi_{\lambda ,n_\lambda}\}$ is an orthonormal basis for $P_\lambda E(\cH)$, then
\[
EP_\lambda =PE_\lambda = \sum_{j=1}^{n_\lambda} E_{\xi_{\lambda,j}}
\]
and $E_{\xi_{\lambda,j}} =\xi_{\lambda,j} \otimes \xi_{\lambda ,j} \in \A$ by $\xi_{\lambda,j} \in P_\lambda (\cH)$. Combining this with the finiteness of $\Lambda_E$, we obtain
\[
E = \sum_{\lambda \in \Lambda_E}EP_\lambda =\sum_{\lambda \in \Lambda_E}\sum_{j=1}^{n_\lambda}E_{\xi_{\lambda ,j}}
\]
as desired.
\end{remark}
%%%%%%%%%%%%%%%%%%%%%%%%%%%%%%%%%%%%%%%%%%%%%%%%%%%%%%%%%
The next lemma computes the rank of  operators on concrete compact $C^*$-algebras in terms on strong BJ orthogonality (it equals the maximal $n$ for which one can find the ascending sequence  of length~$n$, given in the statement of lemma).
\begin{lemma}\label{rank-n}

Let $\cH$ be a Hilbert space, let $(P_\lambda )_{\lambda \in \Lambda} \subset \cB (\cH)$ be an orthogonal family of projections with sum $I$, and let $\A = \{ P_\lambda : \lambda \in \Lambda \}' \cap \cK (\cH)$. Suppose that $A \in \A \setminus \{0\}$ and  $n \geq 1$. Then, $\rank A \geq n$ if and only if there exist $A_0,\ldots ,A_{n-1} \in \A \setminus \{0\}$, with $A_0=
A$, such that $L_{A_0}^s \subsetneq L_{A_1}^s \subsetneq \cdots \subsetneq L_{A_{n-1}}^s$.
\begin{proof}
Since $\overline{A(\cH)}=\overline{|A^*|(\cH)}$  and $L_A^s=L_{|A^*|}^s$, it may be assumed that $A \geq 0$. Suppose that $\rank A \geq n$. Considering the functional calculus for $A$, we infer that there exist a non-increasing sequence of nonnegative numbers $(\lambda_m)_m$ and an orthogonal sequence of rank-one projections $(E_m)_m$ in $\A$ such that $A = \sum_m \lambda_m E_m$, where the series converges in the norm topology. We note that $\lambda_n >0$ by $\rank A \geq n$. Let $A_k = \sum_{j=1}^{n-k} E_j \in \A \setminus \{0\}$ for $k \in \{1,\ldots ,n-1\}$. Then, we have
\[
\overline{A(\cH)} \supsetneq \overline{A_1(\cH)} \supsetneq \cdots \supsetneq \overline{A_{n-1}(\cH)} ,
\]
which, by Remark~\ref{rem:L_a-vs-Rng(A)}, implies that $L_A^s \subsetneq L_{A_1}^s \subsetneq \cdots \subsetneq L_{A_{n-1}}^s$.

Conversely, if $L_A^s \subsetneq L_{A_1}^s \subsetneq \cdots \subsetneq L_{A_{n-1}}^s$ for $A_1,\ldots ,A_{n-1} \in \A \setminus \{0\}$, then
\[
\overline{A(\cH)} \supsetneq \overline{A_1(\cH)} \supsetneq \cdots \supsetneq \overline{A_{n-1}(\cH)} \neq \{0\} .
\]
Therefore, $\dim\overline{A(\cH)} \geq n$, which ensures $\rank A \geq n$.
\end{proof}
\end{lemma}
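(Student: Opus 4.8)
The plan is to translate the chain condition on the neighbourhoods $L_\bullet^s$ into a chain condition on the closures of ranges by means of Remark~\ref{rem:L_a-vs-Rng(A)}, and then to read off the rank as the length of the longest strictly decreasing chain of range-closures. Throughout I use that $\overline{A(\cH)}=\overline{|A^*|(\cH)}$ (because $AA^*=|A^*|^2$) together with $L_A^s=L_{|A^*|}^s$ from Lemma~\ref{abs-polar}, which lets me assume from the outset that $A\ge 0$. The single fact I will invoke repeatedly is the consequence of Remark~\ref{rem:L_a-vs-Rng(A)} that for nonzero $B,C\in\A$ one has $L_B^s\subset L_C^s$ if and only if $\overline{B(\cH)}\supset\overline{C(\cH)}$; in particular $L_B^s\subsetneq L_C^s$ if and only if $\overline{B(\cH)}\supsetneq\overline{C(\cH)}$.

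For the forward direction, suppose $\rank A\ge n$. Applying the spectral theorem to the positive compact operator $A$, I write $A=\sum_m\lambda_m E_m$ with $\lambda_1\ge\lambda_2\ge\cdots>0$ and pairwise orthogonal rank-one projections $E_m$. Since $A\in\A$ commutes with every $P_\lambda$, its eigenspaces are $P_\lambda$-invariant, so I may choose the eigenvectors inside single blocks $P_\lambda(\cH)$, whence each $E_m\in\A$ by Remark~\ref{rem:SVD-blockwise}. As $\rank A\ge n$, at least $n$ of the eigenvalues are positive. Putting $A_0=A$ and $A_k=\sum_{j=1}^{n-k}E_j\in\A\setminus\{0\}$ for $1\le k\le n-1$, these are projections whose range-closures form the strictly decreasing chain $\overline{A(\cH)}\supsetneq\overline{A_1(\cH)}\supsetneq\cdots\supsetneq\overline{A_{n-1}(\cH)}$, the dimensions being at least $n,\,n-1,\dots,1$. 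By the displayed equivalence this yields $L_{A_0}^s\subsetneq L_{A_1}^s\subsetneq\cdots\subsetneq L_{A_{n-1}}^s$, as required.

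For the converse, suppose such $A_0=A,A_1,\dots,A_{n-1}\in\A\setminus\{0\}$ exist with $L_{A_0}^s\subsetneq L_{A_1}^s\subsetneq\cdots\subsetneq L_{A_{n-1}}^s$. By the same equivalence, $\overline{A(\cH)}\supsetneq\overline{A_1(\cH)}\supsetneq\cdots\supsetneq\overline{A_{n-1}(\cH)}$, and the last subspace is nonzero since $A_{n-1}\ne 0$. A strictly decreasing chain of $n$ closed subspaces terminating at a nonzero subspace forces $\dim\overline{A(\cH)}\ge n$, that is, $\rank A\ge n$.

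The computations are routine; the two points requiring care are that the spectral pieces genuinely lie in $\A$ — which is exactly where the block structure and Remark~\ref{rem:SVD-blockwise} enter, via the choice of eigenvectors within single blocks — and the bookkeeping converting non-strict range inclusions into strict ones. I do not expect a serious obstacle, since the conceptual work has already been absorbed into Remark~\ref{rem:L_a-vs-Rng(A)}: once that dictionary between the orthogonality data $L_\bullet^s$ and the dimensions of range-closures is available, the rank is simply the maximal length of a strictly ascending chain of $L$-neighbourhoods.
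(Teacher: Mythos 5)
Your proof is correct and follows essentially the same route as the paper's: reduce to $A=|A^*|\ge 0$, take the spectral decomposition $A=\sum_m\lambda_m E_m$ with rank-one projections $E_m\in\A$, pass to the partial sums $A_k=\sum_{j=1}^{n-k}E_j$, and translate the strict chain of range-closures into a strict chain of $L^s$-sets via Remark~\ref{rem:L_a-vs-Rng(A)}, with the identical dimension count for the converse. The only difference is cosmetic: you make explicit why the $E_m$ lie in $\A$ (eigenspaces of $A$ are $P_\lambda$-invariant, so eigenvectors may be chosen within single blocks, and Remark~\ref{rem:SVD-blockwise} applies), a point the paper leaves implicit in its appeal to functional calculus.
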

%%%%%%%%%%%%%%%%%%%%%%%%%%%%%%%%%%%%%%%%%%%%%%%%%%%%%%%%%

%%%%%%%%%%%%%%%%%%%%%%%%%%%%%%%%%%%%%%%%%%%%%%%%%%%%%%%%%
\begin{lemma}\label{central-ortho}

Let $\cH$ be a Hilbert space, let $(P_\lambda )_{\lambda \in \Lambda} \subset \cB (\cH)$ be an orthogonal family of projections with sum $I$, let $\A = \{ P_\lambda : \lambda \in \Lambda \}' \cap \cK (\cH)$, and $$\Lambda_A = \{ \lambda \in \Lambda : AP_\lambda \neq 0\}$$ for each $A \in \A$. Suppose that $A,B \in \A \setminus \{0\}$. Then, $\Lambda_A \cap\Lambda_B \neq \emptyset$ if and only if there exists a rank-one operator $C \in \A$ such that $C \not\perps  A$ and $C \not\perps  B$.
\begin{proof}
It may be assumed that $A \geq 0$ and $B \geq 0$. Suppose that $\Lambda_A \cap \Lambda_B \neq \emptyset$. Let $\lambda \in \Lambda_A \cap \Lambda_B$, and let $\xi$ be a unit vector in $P_\lambda (\cH)$ such that $A\xi \neq 0$. If $B\xi \neq 0$, then we have $E_\xi \in \A$, $E_\xi \not\perps  A$ and $E_\xi \not\perps  B$ by $\cP_{E_\xi} = \{ \omega_\xi\}$, $\omega_\xi (A) = \|A^{1/2}\xi\|^2>0$ and $\omega_\xi (B) =\|B^{1/2}\xi\|^2>0$ (c.f., Lemma~\ref{sBJ-pure}). If $B\xi=0$, then $BE_\xi=0$ and $BP_\lambda (I-E_\xi) = B(P_\lambda-E_\xi) = BP_\lambda \neq 0$. In this case, let $\zeta$ be a unit vector in $P_\lambda (I-E_\xi )(\cH) = (I-E_\xi)P_\lambda (\cH)$ such that $B\zeta \neq 0$. Replacing $\zeta$ with $-\zeta$ if necessary, we may assume that $\re \langle A\xi,\zeta\rangle \geq 0$. Set $\eta = 2^{-1/2}(\xi+\zeta)$. Since
\[
\omega_\eta (A) = \langle A\eta ,\eta \rangle = 2^{-1}(\langle A\xi,\xi\rangle + 2\re \langle A\xi,\zeta \rangle +\langle A\zeta,\zeta \rangle) >0
\]
and
\[
\omega_\eta (B) = \langle B \eta,\eta \rangle =2^{-1}\langle B\zeta ,\zeta \rangle >0 ,
\]
it follows that $E_\eta \not\perps  A$ and $E_\eta \not\perps B$.

For the converse, suppose that $\Lambda_A \cap \Lambda_B = \emptyset$, and that $C$ is a rank-one element of $\A$. It may be assumed that $\|C\|=1$. Let $\xi,\zeta$ be unit vectors in $\cH$ such that $C=\xi \otimes \zeta$. Recall  that $P_\lambda \xi=\xi$ and $P_\lambda \zeta =\zeta$ for some $\lambda \in \Lambda$. If $C \not\perps  A$, then $|C^*| \not\perps  A$ and $\omega_\xi (A) \neq 0$ by $|C^*|=E_\xi$, which implies that $AP_\lambda \xi =A\xi \neq 0$. Hence, $\lambda \in \Lambda_A$. Combining this with $\Lambda_A \cap \Lambda_B=\emptyset$, we obtain $BP_\lambda =0$ and $B\xi = BP_\lambda \xi=0$, that is, $|C^*|\perps  B$. This means that there is no rank-one operator $C \in \A$ such that $C \not\perps  A$ and $C \not\perps  B$.
\end{proof}
\end{lemma}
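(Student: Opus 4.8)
The plan is to prove the two directions of the equivalence separately, using the pure-state description of strong BJ orthogonality provided by Lemma~\ref{sBJ-pure} together with the explicit form of pure states on a concrete compact $C^*$-algebra from Lemma~\ref{cpt-pure-state}. Throughout I would reduce to the positive case via Lemma~\ref{abs-polar}, so that $A\ge0$ and $B\ge0$, and I would work with the rank-one projections $E_\xi$ associated to unit vectors $\xi$ lying in a single block $P_\lambda(\cH)$, which are precisely the elements of $\A$ by Remark~\ref{rem:SVD-blockwise}. The key observation linking the algebraic condition $\Lambda_A\cap\Lambda_B\neq\emptyset$ to orthogonality is that, by Lemma~\ref{sBJ-pure} (and its pure-state version via $\cP_{E_\xi}=\{\omega_\xi\}$), for a rank-one $C=E_\xi$ with $P_\lambda\xi=\xi$ one has $C\not\perps A$ precisely when $\omega_\xi(A)=\langle A\xi,\xi\rangle\neq0$, i.e.\ when $A\xi\neq0$, which forces $\lambda\in\Lambda_A$.

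For the forward implication I would pick $\lambda\in\Lambda_A\cap\Lambda_B$ and try to produce a single unit vector $\eta\in P_\lambda(\cH)$ with both $\langle A\eta,\eta\rangle>0$ and $\langle B\eta,\eta\rangle>0$; the operator $C=E_\eta$ then witnesses the claim. The naive choice is a unit vector $\xi\in P_\lambda(\cH)$ with $A\xi\neq0$. If simultaneously $B\xi\neq0$ we are done immediately. The delicate case is $B\xi=0$: here I would exploit that $BP_\lambda\neq0$ to find $\zeta\in P_\lambda(\cH)$ orthogonal to $\xi$ (i.e.\ $\zeta\in(I-E_\xi)P_\lambda(\cH)$) with $B\zeta\neq0$, and then form the averaged vector $\eta=2^{-1/2}(\xi+\zeta)$. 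The point is that $\langle B\eta,\eta\rangle=\tfrac12\langle B\zeta,\zeta\rangle>0$ since $B\xi=0$, while $\langle A\eta,\eta\rangle=\tfrac12(\langle A\xi,\xi\rangle+2\re\langle A\xi,\zeta\rangle+\langle A\zeta,\zeta\rangle)$ can be made strictly positive by first replacing $\zeta$ with $-\zeta$ if necessary to ensure $\re\langle A\xi,\zeta\rangle\ge0$, which keeps the $A$-term bounded below by the already-positive $\langle A\xi,\xi\rangle$. I expect this averaging argument to be the main obstacle, since one must arrange positivity for both $A$ and $B$ at once while only controlling the cross term's sign by a harmless sign flip.

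For the converse I would argue contrapositively: assuming $\Lambda_A\cap\Lambda_B=\emptyset$, I would show that every rank-one $C\in\A$ satisfies $C\perps A$ or $C\perps B$. Writing $C=\xi\otimes\zeta$ with $\|C\|=1$, Remark~\ref{rem:SVD-blockwise} gives a unique $\lambda$ with $P_\lambda\xi=\xi$ and $P_\lambda\zeta=\zeta$, and $|C^*|=E_\xi$. If $C\not\perps A$ then, as above, $A\xi\neq0$, so $\lambda\in\Lambda_A$; by disjointness $\lambda\notin\Lambda_B$, whence $BP_\lambda=0$ and $B\xi=BP_\lambda\xi=0$, giving $\omega_\xi(B)=0$ and hence $|C^*|\perps B$, i.e.\ $C\perps B$ by Lemma~\ref{abs-polar}. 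This shows no rank-one $C$ can fail orthogonality to both simultaneously, completing the contrapositive. The converse is essentially bookkeeping once the pure-state criterion is in hand, so I anticipate no real difficulty there beyond correctly invoking Lemma~\ref{sBJ-pure} and the support structure of the blocks.
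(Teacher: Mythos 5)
Your proposal is correct and matches the paper's own proof essentially step for step: the same reduction to $A,B\ge 0$, the same identification $C\not\perps A \Leftrightarrow \omega_\xi(A)\neq 0$ via $\cP_{E_\xi}=\{\omega_\xi\}$, the same case split in the forward direction with the sign-adjusted averaged vector $\eta=2^{-1/2}(\xi+\zeta)$ when $B\xi=0$, and the same contrapositive block-support argument for the converse. There is nothing to add or repair.
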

%%%%%%%%%%%%%%%%%%%%%%%%%%%%%%%%%%%%%%%%%%%%%%%%%%%%%%%%%
\subsection{Characterizing compact $C^*$-algebras}

Let $\A$ be a $C^*$-algebra, and let $\mathcal{C} \subset \A \setminus \{0\}$. Then,  $\mathcal{C}$  is called an \emph{$R$-chain} if $R_x^s \subsetneq R_y^s$ or $R_x^s \supsetneq R_y^s$ whenever $x,y \in \mathcal{C}$ and $x \neq y$. Define the set $\mathfrak{F}_x$ by
\[
\mathfrak{F}_x = \{ \mathcal{C} \subset \A \setminus \{0\} : \text{$\mathcal{C}$ is an $R$-chain with $R_y^s \subset R_x^s$ for each $y \in \mathcal{C}$} \} .
\]
It is important to know if $\mathfrak{F}_x$ contains an infinite set or not.
%%%%%%%%%%%%%%%%%%%%%%%%%%%%%%%%%%%%%%%%%%%%%%%%%%%%%%%%%%
\begin{lemma}\label{R-chain}

Let $\A$ be a $C^*$-algebra, let $x \in \A$ be positive, and let $(x_n)_n$ be a sequence of positive elements of $\A$. Suppose that $\|x\|=\|x_n\|$ and $xx_n=x_n$ for each $n$, and that $x_mx_n = 0$ whenever $m \neq n$. Then, $\{ y_n : n \in \mathbb{N}\} \in \mathfrak{F}_x$, where $y_n = \sum_{j=1}^n x_j$.
\begin{proof}
It may be assumed that $\|x\|=1$. We note that $\|y_n\|=1$ for each $n$.
 Let $z \in R_{y_n}^s$. It follows from $xy_n=y_n$ that
\[
1 =\|y_n\| \leq \|y_n+zwy_n \| = \|xy_n+zwy_n\| \leq \|x+zw\|
 \]
for each $w \in \A$, which implies that $z \in R_x^s$. Hence, $R_{y_n}^s \subset R_x^s$ for each $n$. Meanwhile, since $z \in R_{y_n}^s$, Lemma~\ref{sBJ-pure} generates a pure state $\rho$ of $\A$ such that $\rho (y_n)=1$ and $\rho (|z^*|)=0$. It follows from  Lemma~\ref{lem:reduction} that $\rho(y_n^2)=\|y_n\|\rho(y_n)=1$.  and $z \in R_{y_n^2}^s$ again by Lemma~\ref{sBJ-pure}. Combining this with $y_n^2=y_{n+1}y_n$, we have
\[
1 = \|y_n^2\| \leq \|y_n^2+zwy_n\| = \|y_{n+1}y_n+zwy_n\| \leq \|y_{n+1}+zw\|
\]
for each $w\in \A$, that is, $z \in R_{y_{n+1}}^s$. Finally, we note that
\[
\|y_{n+1}+y_nw\| \geq \|x_{n+1}(y_{n+1}+y_nw)\| = \|x_{n+1}^2\| =1=\|y_{n+1}\|
\]
for each $w \in \A$. This shows that $y_n \in R_{y_{n+1}}^s \setminus R_{y_n}^s$. Therefore, $R_{y_n}^s \subsetneq R_{y_{n+1}}^s$ for each $n$, and $\{ y_n : n \in \mathbb{N}\} \in \mathfrak{F}_x$.
\end{proof}
\end{lemma}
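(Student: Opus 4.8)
The plan is to verify directly the two conditions defining membership in $\mathfrak{F}_x$: that every $R_{y_n}^s$ sits inside $R_x^s$, and that the family $\{R_{y_n}^s\}_n$ forms a strictly increasing chain under inclusion. I would first normalize $\|x\|=1$. Since the $x_j$ are positive with $x_mx_n=0$ for $m\ne n$, they commute (as $x_nx_m=(x_mx_n)^*=0$) and generate a commutative $C^*$-subalgebra, so $\|y_n\|=\max_{j\le n}\|x_j\|=1$. I would then record the two algebraic identities that power the whole argument: from $xx_j=x_j$ I obtain $xy_n=y_n$, and from $x_{n+1}x_j=0$ for $j\le n$ I obtain $x_{n+1}y_n=0$, whence $y_{n+1}y_n=y_n^2$ and $x_{n+1}y_{n+1}=x_{n+1}^2$.

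For the containment $R_{y_n}^s\subset R_x^s$ I would exploit $xy_n=y_n$ at the level of norms. Given $z\in R_{y_n}^s$, i.e.\ $y_n\perps z$, applying the definition to the multiplier $wy_n$ yields, for every $w\in\A$,
\[
1=\|y_n\|\le\|y_n+zwy_n\|=\|(x+zw)y_n\|\le\|x+zw\|,
\]
so $x\perps z$ and $z\in R_x^s$.

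The crux is the strict nesting $R_{y_n}^s\subsetneq R_{y_{n+1}}^s$, and here the main obstacle is that $y_{n+1}$ does not factor through $y_n$ as $x$ does; the remedy is to route through the intermediate element $y_n^2=y_{n+1}y_n$. The key point is that a pure state peaking at $y_n$ also peaks at $y_n^2$: for $z\in R_{y_n}^s$, Lemma~\ref{sBJ-pure} supplies a pure state $\rho$ with $\rho(y_n)=1$ and $\rho(|z^*|)=0$, and Lemma~\ref{lem:reduction} upgrades this to $\rho(y_n^2)=\|y_n\|\rho(y_n)=1$, so $z\in R_{y_n^2}^s$ by Lemma~\ref{sBJ-pure} again. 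The same norm manipulation as above, now with $y_n^2=y_{n+1}y_n$ in place of $y_n=xy_n$, gives $\|y_{n+1}+zw\|\ge 1$ for all $w$, i.e.\ $z\in R_{y_{n+1}}^s$. For strictness I would exhibit $y_n$ itself as a separating element: from $x_{n+1}y_n=0$ and $x_{n+1}y_{n+1}=x_{n+1}^2$ one gets
\[
\|y_{n+1}+y_nw\|\ge\|x_{n+1}(y_{n+1}+y_nw)\|=\|x_{n+1}^2\|=1=\|y_{n+1}\|
\]
for every $w$, so $y_{n+1}\perps y_n$ and $y_n\in R_{y_{n+1}}^s$, whereas $y_n\notin R_{y_n}^s$ since $y_n\not\perps y_n$ by non-degeneracy ($y_n\ne 0$). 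Assembling these facts produces $R_{y_1}^s\subsetneq R_{y_2}^s\subsetneq\cdots$ with every term contained in $R_x^s$, which is precisely the assertion $\{y_n:n\in\mathbb N\}\in\mathfrak{F}_x$.
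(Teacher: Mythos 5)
Your proposal is correct and takes essentially the same route as the paper's own proof: the identical norm estimates built on $xy_n=y_n$ and $y_n^2=y_{n+1}y_n$, the same use of Lemma~\ref{sBJ-pure} together with Lemma~\ref{lem:reduction} to pass from $z\in R_{y_n}^s$ through $R_{y_n^2}^s$ into $R_{y_{n+1}}^s$, and the same separating element $y_n$ (killed by $x_{n+1}$) to obtain strictness. The small additions you make---justifying $\|y_n\|=1$ via commutativity of the $x_j$ and invoking non-degeneracy explicitly for $y_n\notin R_{y_n}^s$---only spell out steps the paper leaves implicit.
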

%%%%%%%%%%%%%%%%%%%%%%%%%%%%%%%%%%%%%%%%%%%%%%%%%%%%%%%%%%
\begin{theorem}\label{cpt-char}

Let $\A$ be a $C^*$-algebra. Then, $\A$ is a compact $C^*$-algebra if and only if it satisfies the following conditions for each $x \in \A \setminus \{0\}$:
\begin{itemize}
\item[(i)] There exists a nonzero projection $e \in \A$ such that $e\A e=\mathbb{C}e$ and $R_e^s \subset R_x^s$.
\item[(ii)] Every element of $\mathfrak{F}_x$ is a finite set.
\end{itemize}
\begin{proof}
Suppose that $\A$ is a compact $C^*$-algebra. In this case, it may be assumed that $\A$ acts on a Hilbert space $\cH$ and there exists an orthogonal family of projections $(P_\lambda )_{\lambda \in \Lambda} \subset \cB (\cH)$ with sum $I$ such that $\A = \{ P_\lambda : \lambda \in \Lambda \}' \cap \cK (\cH)$. Let $A \in \A \setminus \{0\}$, and let $\lambda \in \Lambda$ be such that $\|AP_\lambda \|=\|A\|$. Since $|A^*|P_\lambda$ can be viewed as a compact operator on $P_\lambda (\cH)$, we have a unit vector $\xi \in P_\lambda (\cH)$ such that $|A^*| \xi = \||A^*|P_\lambda\|\xi = \|A\|\xi$. It follows from $E_\xi =\xi \otimes \xi \in \A$ and $M_{E_\xi} = \mathbb{C}\xi \subset M_{|A^*|}$ that $R_{E_\xi}^s \subset R_{|A^*|}^s = R_A^s$ by  Lemma~\ref{Rset-cpt}. Also, $E_{\xi}$ is a rank-one projection and hence $E_\xi\A E_{\xi}=\CC E_{\xi}$.

Next, let $\mathcal{C}$ be an element of $\mathfrak{F}_{A}$.  By Lemma~\ref{Rset-cpt}, we have $M_{|B^*|} \subset M_{|A^*|}$ for each $B \in \mathcal{C}$, while $M_{|A^*|}$ is finite-dimensional. Set $\dim M_{|A^*|} = n$. We claim that $\mathcal{C}$ contains at most $n$ elements. Indeed, if $\mathcal{C}$ contains mutually distinct elements $B_1,\ldots ,B_{n+1}$, then $\dim M_{|B_i^*|} = \dim M_{|B_j^*|}$ for some $i,j$ with $i \neq j$. However, in this case, we have neither $R_{B_i}^s \subsetneq R_{B_j}^s$ nor $R_{B_i}^s \supsetneq R_{B_j}^s$, which contradicts the choice of $\mathcal{C}$. This proves the claim.

Conversely, we assume that $\A$ satisfies the conditions (i) and (ii). It may be assumed that $\A$ acts on a Hilbert space $\cH$. Let $A \in \A \setminus \{0\}$. By (i), we have $R_E^s \subset R_A^s = R_{|A^*|}^s$ for some nonzero projection $E \in \A$ such that $E\A E = \mathbb{C}E$. Then, it follows from \cite[Theorem 2.5(b)]{AR14} that $(I-E)A \in R_E^s \subset R_A^s$, which implies that
\[
\|A\| \geq \|EA\| = \|A-(I-E)A\| =\lim_{\lambda}\|A-(I-E)AU_\lambda\| \geq \|A\| .
 \]
where $(U_{\lambda})_{\lambda}\in\A$ is an approximate identity.
 Hence,  $\|EA(EA)^*\|=\|EA\|^2=\|A\|^2$, so $EAA^*E = \|A\|^2E$ and $\|A^*\xi\|=\||A^*|\xi\|=\|A\|=\|A^*\|$ whenever $\xi \in E(\cH)$ and $\|\xi\|=1$. In particular, we obtain
$|A^*|E =\|A\|E$.

Let $A \in \A \setminus \{0\}$ be positive and $\|A\|=1$. By the preceding paragraph, there exists a nonzero projection $E \in \A$ such that \begin{equation}\label{eq:AE=E}
E\A E=\mathbb{C}E \quad\hbox{ and }\quad AE=E.
\end{equation}
We claim that there exists a finite family   of pairwise orthogonal nonzero projections $E_1,\ldots ,E_n \in \A$ such that $E_j\A E_j =\mathbb{C}E_j$ and $AE_j =E_j$ for each $j$ and $\|(I-\sum_{j=1}^nE_j)A\|<\|A\|$. To show this, suppose to the contrary that there is no such a family. Let $E_1,\ldots ,E_n \in \A$ be orthogonal family of projections such that $E_j\A E_j =\mathbb{C}E_j$ and $AE_j =E_j$ for each $j$. Since $\|(I-\sum_{j=1}^nE_j)A\|=\|A\|=1$ and $AE_j=E_j =E_jA$, it follows that
\[
\left( I-\sum_{j=1}^n E_j \right) A = A^{1/2}\left( I-\sum_{j=1}^n E_j \right)A^{1/2} \geq 0 ,
\]
which, by condition~(i) and identity~\eqref{eq:AE=E}, ensures the existence of a nonzero projection $E_{n+1} \in \A$ such that $E_{n+1}\A E_{n+1}=\mathbb{C}E_{n+1}$ and $(I-\sum_{j=1}^nE_j)AE_{n+1}=E_{n+1}$.  In particular, we derive
\[
E_iE_{n+1} = E_i\left( I-\sum_{j=1}^n E_j \right) AE_{n+1} = 0
\]
for each $i \in \{1,\ldots ,n\}$, that is, $\{E_1,\ldots, E_n,E_{n+1}\}$ is an orthogonal family of projections. As such, $E_{n+1}A=E_{n+1}\cdot(I-\sum_{j=1}^n E_j)A=((I-\sum_{j=1}^n E_j)A\cdot E_{n+1})^*=E_{n+1}$ Therefore, by an induction, we can construct an orthogonal sequence of nonzero projections $(E_n)_n$ such that $E_n\A E_n = \mathbb{C}E_n$ and $AE_n = E_n$ for each $n$. However, then $\{ \sum_{j=1}^n E_j : n \in \mathbb{N}\} \in \mathfrak{F}_A$ by Lemma~\ref{R-chain}, which contradicts (ii). Thus, we must have a finite orthogonal family of nonzero projections $E_1,\ldots ,E_n \in \A$ such that $E_j\A E_j =\mathbb{C}E_j$ and $AE_j =E_j$ for each $j$ and $\|(I-\sum_{j=1}^nE_j)A\|<\|A\|$.

Let $\A_0$ be the abelian $C^*$-subalgebra of $\A$ generated by $\{ A,E_1,\ldots ,E_n\}$. It may be assumed that $\A_0 = C_0(K)$ for some locally compact Hausdorff space $K$. Let $f$ and $e_j$ be the elements of $C_0(K)$ corresponding to $A$ and $E_j$. Since $C_0(K)e_j=\mathbb{C}e_j$, by Urysohn's lemma, we infer that $e_j$ is the characteristic function for a singleton $\{t_j\}$. If $1$ is not isolated in $\Sp (f)=\overline{f(K)}$, then there exists a net $(t_a)_a \subset K$ such that $f(t_a) \neq 1$ for each $a$ and $\lim_af(t_a) =1$. Since $\|f-\sum_{j=1}^ne_j\|<1$, we have $\|f-\sum_{j=1}^ne_j\|<|f(t_a)|$ for some index $a$. However, then, we have $t_a \in \{t_1,\ldots ,t_n\}$ and $f(t_a)=1$, which contradicts the choice of $(t_a)_a$. Hence, $1$ is isolated in $\Sp (f)$.

We have proved the following for each positive nonzero element $A \in \A$:
\begin{itemize}
\item[(a)] There exists a finite orthogonal family of nonzero projections $E_1,\ldots ,E_n \in \A$ such that $E_j \A E_j = \mathbb{C}E_j$ and $AE_j =\|A\|E_j$ for each $j$ and $\|(I-\sum_{j=1}^nE_j)A\|<\|A\|$.
\item[(b)] $\|A\|$ is isolated in $\Sp (A)$.
\end{itemize}
Let $A$ be a nonzero positive element of $\A$. Suppose that there exists a nonzero accumulation point $t \in \Sp (A)$. Then, there exists a strictly monotone sequence $(t_n)_n \subset (0,\|A\|)$ that converges to $t$. If $(t_n)_n$ is strictly increasing, then we choose an element $f_n \in C(\Sp (A))$ such that $0 \leq f_n \leq \1$, $f_n(t_n)=1$ and
\[
\supp f_n \subset \left( \frac{t_{n-1}+t_n}{2},\frac{t_n+t_{n+1}}{2}\right) ,
\]
where $t_0=0$. We note that $f_n(0)=0$ for each $n$ and $f_mf_n=0$ whenever $m \neq n$. If $(t_n)_n$ is strictly decreasing, the similar construction applies. We can also construct  a function $f\in C(\Sp (A))$ such that $0 \leq f \leq \1$, $f(0)=0$ and $\bigcup_n \supp f_n \subset f^{-1}(\{1\})$, in which case, $\|f\|=\|f_n\|=1$ and $ff_n = f_n$ for each $n$. However, Lemma~\ref{R-chain} prevents this construction because $\{ \sum_{j=1}^n f_n(A) : n \in \mathbb{N}\} \in \mathfrak{F}_{f(A)}$ which contradicts (ii). Therefore, $\Sp (A)$ does not contain any nonzero accumulation point. From this, $\A$ coincides with the closed linear span of its projections.

Finally, let  $\pi_{\rm ra} = \sum_\lambda\oplus  \pi_\lambda \colon   \A \to \cB(\bigoplus_\lambda  \cH_\lambda )$ (here, following~\cite{KR97b}, $\sum_\lambda\oplus $ denotes the direct $\ell_\infty$-sum as opposed to $\bigoplus$ for direct $c_0$-sums) be the reduced atomic representation of $\A$. We know that $\pi_{\rm ra}$ is faithful, and that weak-operator closure of $\pi_{\rm ra}(\A)$ coincides with $\mathcal{R}=\sum_\lambda\oplus \cB(\cH_\lambda )$  (see \cite[Proposition~10.3.10]{KR97b}). If $E \in \A$ satisfies $E\A E=\mathbb{C}E$, then $F\mathcal{R}F = \mathbb{C}F$, where $F =\pi_{\rm ra}(E)$. This means that $F$ is a minimal projection in the von Neumann algebra $\mathcal{R}$. It follows that $F$ is rank-one. Meanwhile, by (a), each projection $E \in \A$ has an orthogonal family of subprojections $E_1,\ldots ,E_n \in \A$ such that $E_j\A E_j = \mathbb{C}E_j$ and $E=\sum_{j=1}^nE_j$. Hence, $\pi_{\rm ra}(E) = \sum_{j=1}^n \pi_{\rm ra}(E_j)$ is a rank-$n$ projection, which in turn implies every elements of $\pi_{\rm ra}(\A)$ can be approximated in norm by a linear combination of finite-rank projections in  $\cB (\bigoplus_\lambda \cH_\lambda )$. Hence, we obtain $\pi_{\rm ra}(\A) \subset \cK (\bigoplus_\lambda  \cH_\lambda)$.
\end{proof}
\end{theorem}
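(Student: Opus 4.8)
The plan is to recast the two abstract hypotheses as concrete operator-theoretic statements using the lemmas of this section, and then to recover compactness from the resulting spectral picture. The bridge in both directions is Lemma~\ref{Rset-cpt}, which for operators $A,B$ in the normal form $\A=\{P_\lambda:\lambda\in\Lambda\}'\cap\cK(\cH)$ converts the inclusion $R_A^s\subset R_B^s$ into the geometric inclusion $M_{|A^*|}\subset M_{|B^*|}$, and Lemma~\ref{R-chain}, which manufactures infinite $R$-chains out of orthogonal ``level sets'' of an element.

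For the forward implication, fix a nonzero $x$, which by Lemma~\ref{abs-polar} I may take to be positive. Since $|x^*|$ commutes with each $P_\lambda$ and is compact, its norm is attained on some block $P_\lambda(\cH)$, so there is a unit vector $\xi\in P_\lambda(\cH)$ with $|x^*|\xi=\|x\|\xi$. Then $E_\xi=\xi\otimes\xi\in\A$ is a minimal projection (so $E_\xi\A E_\xi=\CC E_\xi$) with $M_{E_\xi}=\CC\xi\subset M_{|x^*|}$, and Lemma~\ref{Rset-cpt} gives $R_{E_\xi}^s\subset R_x^s$, which is (i). For (ii), the same lemma turns any $R$-chain below $x$ into a strictly monotone chain of subspaces of the finite-dimensional space $M_{|x^*|}$; such a chain is finite.

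The substantial direction is the converse, where I would progressively rigidify the spectrum of an arbitrary positive $A$, normalized to $\|A\|=1$. Condition (i) supplies a minimal projection $e$ with $R_e^s\subset R_A^s$. Now $e\perps(I-e)A$ holds automatically, since $e(I-e)=0$ forces $\|e+(I-e)Az\|\ge\|e(e+(I-e)Az)\|=\|e\|$ for all $z$; hence $(I-e)A\in R_A^s$, i.e.\ $A\perps(I-e)A$, and feeding an approximate unit into this relation yields $\|eA\|=1$, so $eAe=e$ and $Ae=e$. Stacking such projections, one applies (i) repeatedly to the compressions $(I-\sum_j E_j)A$ to build an orthogonal family of minimal projections fixed by $A$ at the top level; were this family infinite, Lemma~\ref{R-chain} would place an infinite set in $\mathfrak{F}_A$, contradicting (ii). This leaves a finite family $E_1,\dots,E_n$ with $AE_j=E_j$ and $\|(I-\sum_j E_j)A\|<1$, and passing to the abelian subalgebra generated by $A,E_1,\dots,E_n$ (realized as $C_0(K)$) with Urysohn's lemma shows that $1=\|A\|$ is isolated in $\Sp(A)$.

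The heart of the matter --- and the step I expect to be the main obstacle --- is upgrading ``the top of the spectrum is isolated'' to ``the whole nonzero spectrum is scattered.'' The threat is an accumulation point $t\in(0,1)$ of $\Sp(A)$. I would exclude it by functional calculus: along a monotone sequence $t_n\to t$ choose disjointly supported bumps $f_n$ with $f_n(t_n)=1$, $f_n(0)=0$, together with an envelope $f$ satisfying $f(0)=0$ and $ff_n=f_n$; then $\{\sum_{j\le n}f_j(A):n\in\mathbb{N}\}$ is an infinite member of $\mathfrak{F}_{f(A)}$ by Lemma~\ref{R-chain}, once more violating (ii). Consequently $\Sp(A)$ has no nonzero accumulation point, so $\A$ is the closed linear span of its projections. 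Finally, I would pass to the reduced atomic representation $\pi_{\mathrm{ra}}\colon\A\to\sum_\lambda\oplus\cB(\cH_\lambda)$: the relation $e\A e=\CC e$ makes $\pi_{\mathrm{ra}}(e)$ minimal, hence rank-one, in the ambient von Neumann algebra; since the finiteness already obtained writes every projection of $\A$ as a finite orthogonal sum of such minimal ones, all projections are finite-rank, and the span identity then forces $\pi_{\mathrm{ra}}(\A)\subset\cK(\bigoplus_\lambda\cH_\lambda)$, i.e.\ $\A$ is compact.
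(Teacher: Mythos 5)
Your proposal is correct and follows essentially the same route as the paper's own proof: Lemma~\ref{Rset-cpt} for both halves of the forward direction, then the approximate-unit argument yielding $Ae=e$, the stacking of minimal projections terminated by Lemma~\ref{R-chain} together with condition (ii), the $C_0(K)$/Urysohn and bump-function constructions to scatter the spectrum, and finally the reduced atomic representation to land in the compacts. The only cosmetic difference is that you verify $e\perps (I-e)A$ directly from $e(I-e)=0$, where the paper instead cites \cite[Theorem 2.5(b)]{AR14}.
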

%%%%%%%%%%%%%%%%%%%%%%%%%%%%%%%%%%%%%%%%%%%%%%%%%%%%%%%%%%
\begin{remark}
  It follows from~\cite[Corollary 2.8]{AGKRTZ24}  that a nonzero $a\in\A$ is a left-symmetric element if and only if $e:=|a^*|/\|a\|$ is also left-symmetric, and by~\cite[Corollary 2.7]{AGKRTZ24} this is further equivalent that $e$ is  a projection which satisfies $e\A e=\CC e$. Notice that Lemma~\ref{abs-polar} implies $R_a^s=R_{|a^*|}^s=R_e^s$.    Clearly, being a nonzero  element is equivalent to $a\not\perps a$.  Thus,  Theorem~\ref{cpt-char} can equivalently  be stated as follows:

  A $C^*$-algebra $\A$ is a compact $C^*$-algebra if and only if the following two conditions hold  for each $x\in\A$ with $x\not\perps x$:
  \begin{itemize}
      \item[(i')] There exists a left-symmetric  $a\in \A$,  with  $a\not\perps a$, and $R^s_a\subseteq R^s_x$.
      \item[(ii)]  Every element of $\mathfrak{F}_x$ is a finite set.
  \end{itemize}
  This gives a classification of compact $C^\ast$-algebras completely in terms of strong BJ relation.
  \end{remark}

\begin{corollary}\label{pre-cpt}

Let $\A$ and $\B$ be $C^*$-algebras. Suppose that $\A$ is a compact $C^*$-algebra, and that there exists a strong BJ isomorphism   from $\A$ onto $\B$. Then, $\B$ is also a compact $C^*$-algebra.
\end{corollary}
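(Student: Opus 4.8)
The plan is to invoke the intrinsic characterization of compact $C^*$-algebras and to observe that every ingredient in it is an invariant of the relation $\perps$, hence transported by any strong BJ isomorphism. I would work with the reformulation recorded in the Remark following Theorem~\ref{cpt-char}: a $C^*$-algebra $\A$ is a compact $C^*$-algebra if and only if, for every $x \in \A$ with $x \not\perps x$, there is a left-symmetric $a \in \A$ with $a \not\perps a$ and $R_a^s \subseteq R_x^s$ (condition~(i$'$)), and every element of $\mathfrak{F}_x$ is finite (condition~(ii)). The reason for passing to this form is that the algebraic clause $e\A e = \CC e$ appearing in Theorem~\ref{cpt-char}(i) has already been converted into the purely relational notion of left-symmetry; this is the one place where something beyond the bare orthogonality relation is needed, and it is supplied by~\cite{AGKRTZ24} as noted in that Remark.

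Let $T \colon \A \to \B$ be a strong BJ isomorphism. Since $x \perps y$ is equivalent to $Tx \perps Ty$ and $T$ is a bijection, I would first record the relevant invariances. Nondegeneracy is preserved, so $x \not\perps x$ if and only if $Tx \not\perps Tx$; in particular the quantification over nonzero elements is matched, as $x \neq 0 \iff x \not\perps x$. The neighborhoods transform as $T(R_x^s) = R_{Tx}^s$ and $T(L_x^s) = L_{Tx}^s$, whence $R_x^s \subseteq R_y^s$ if and only if $R_{Tx}^s \subseteq R_{Ty}^s$, and likewise for proper inclusions and for $L$-sets. Left-symmetry is itself relational, namely $a$ is left-symmetric when $a \perps z$ forces $z \perps a$; using surjectivity of $T$ to let the test element run over all of $\B$, it follows that $a$ is left-symmetric in $\A$ if and only if $Ta$ is left-symmetric in $\B$. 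Finally, $T$ carries $R$-chains to $R$-chains and induces a cardinality-preserving bijection $\mathfrak{F}_x \to \mathfrak{F}_{Tx}$, so every element of $\mathfrak{F}_x$ is finite if and only if every element of $\mathfrak{F}_{Tx}$ is finite.

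With these invariances in hand the conclusion is immediate. Given any $y \in \B$ with $y \not\perps y$, write $y = Tx$ with $x \in \A$ and $x \not\perps x$. Applying (i$'$) to $x$ and transporting the witness $a$ to $Ta$ produces a left-symmetric element of $\B$ with $Ta \not\perps Ta$ and $R_{Ta}^s \subseteq R_{Tx}^s = R_y^s$, while transporting (ii) for $x$ gives that every element of $\mathfrak{F}_y$ is finite. Hence $\B$ satisfies (i$'$) and (ii) for all $y$ with $y \not\perps y$, and by the characterization cited above $\B$ is a compact $C^*$-algebra. I do not expect a genuine obstacle once the reformulation (i$'$)--(ii) is adopted; the only delicate point is exactly that the naive condition (i) of Theorem~\ref{cpt-char} is algebraic and must first be replaced by its relational shadow (i$'$) before the transport argument applies.
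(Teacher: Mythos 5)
Your proposal is correct and matches the paper's intended argument: the paper states Corollary~\ref{pre-cpt} without a written proof precisely because, as you observe, the characterization in Theorem~\ref{cpt-char} --- once recast via the Remark into the purely relational conditions (i$'$) and (ii) --- is manifestly invariant under any strong BJ isomorphism. Your verification of the transport facts ($T(0)=0$, $T(R_x^s)=R_{Tx}^s$, preservation of left-symmetry via surjectivity, and the cardinality-preserving bijection $\mathfrak{F}_x\to\mathfrak{F}_{Tx}$) supplies exactly the details the paper leaves implicit.
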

%%%%%%%%%%%%%%%%%%%%%%%%%%%%%%%%%%%%%%%%%%%%%%%%%%%%%%%%%%

\subsection{A preserver problem}
Recall that, if $A$  and $B$ are rank-one operators
then
\begin{equation}\label{eq:rk-1}
    A\perps  B \ \hbox{ is equivalent to }\ |B^*||A^*|=0.
\end{equation}
%%%%%%%%%%%%%%%%%%%%%%%%%%%%%%%%%%%%%%%%%%%%%%%%%%%%%%%%%
\begin{theorem}\label{nonlinear-1}

Let $\cH$ be a Hilbert space, let $(P_\lambda )_{\lambda \in \Lambda} \subset \cB (\cH)$ be an orthogonal family of projections with sum $I$, let $\A = \{ P_\lambda : \lambda \in \Lambda \}' \cap \cK (\cH)$, and let $\B$ be a compact $C^*$-algebra. Suppose that $T\colon \A \to \B$ is a strong  BJ isomorphism.  Then, there exists a linear $*$-isomorphism $\Psi \colon \B \to \A$ such that $(\Psi \circ T)( \sum_{\lambda \in \Lambda_0}\A P_\lambda ) = \sum_{\lambda \in \Lambda_0}\A P_\lambda$ for each  $\Lambda_0 \subset \Lambda$.
\begin{proof}
By Corollary~\ref{pre-cpt} it may be assumed that $\B$ acts on a Hilbert space $\cH_\B$ and there exists an orthogonal family of projections $(Q_\mu)_{\mu \in M}$ with sum $I$ such that
$$\B = \{ Q_\mu : \mu \in M\}' \cap \cK (\cH_\B).$$ By Lemma~\ref{rank-n} and $T (0)=0$, we have $\rank T (A) = \rank A$ for each $A \in \A$. Let $(e_{\lambda ,j})_{j \in J_\lambda}$ be an orthonormal basis for $P_\lambda (\cH)$, let $E_{\lambda ,j}$ be the rank-one projection onto $\mathbb{C}e_{\lambda,j}$, and let $F_{\lambda,j} = \|T (E_{\lambda ,j})\|^{-1}|T (E_{\lambda ,j})^*|$. Then,
 by~\eqref{eq:rk-1} $\{ F_{\lambda ,j} : \lambda \in \Lambda ,~j \in J_\lambda \}$ is an orthogonal family of rank-one projections. Let $\mu_{\lambda ,j}$ be a unique element of $\{ \mu \in M  : F_{\lambda ,j}P_\mu \neq 0\}$ for each $\lambda \in \Lambda$ and each $j \in J_\lambda$. It follows from Lemma~\ref{central-ortho} that we have $\lambda = \lambda'$ if and only if
\[
\{ \mu_{\lambda ,j}\}=\{ \mu \in M : F_{\lambda ,j}P_\mu \neq 0\} = \{\mu \in M :F_{\lambda',j'}P_\mu \neq 0\} = \{\mu_{\lambda',j'}\}
\]
whenever $j \in J_\lambda$ and $j' \in J_{\lambda'}$. From this, we can define an injective mapping $\varphi \colon  \Lambda \to M$ by $\varphi (\lambda ) = \mu_{\lambda ,j}$ for each $\lambda \in \Lambda$.
 If there exists an index $\mu \in M \setminus \varphi (\Lambda)$, then a rank-one projection $F \leq Q_\mu$ satisfies $F\perps  F_{\lambda,j}$ for each $\lambda \in \Lambda$ and each $j \in J_\lambda$, which implies that $|T^{-1}(F)^*|\perps  E_{\lambda ,j}$ for each $\lambda \in \Lambda$ and each $j \in J_\lambda$. However, since $\sum_{\lambda \in \Lambda}\sum_{j \in J_\lambda}E_{\lambda ,j} = \sum_{\lambda \in \Lambda}P_\lambda =I$ and $|T^{-1}(F)^*|$ is a scalar multiple of a rank-one projection, it follows that $|T^{-1}(F)^*|=0$, a contradiction. Hence, $\varphi (\Lambda )=M$ and $\varphi$ is a bijection.

Let $A \in \A \setminus \{0\}$.  We claim that $\{ \mu \in M : T (A)Q_\mu \neq 0\} = \varphi (\Lambda_A)$. To show this, let $\lambda \in \Lambda$ and $j \in J_\lambda$. Then, $\lambda \in \Lambda_A$ if and only if $\Lambda_A \cap \Lambda_{E_{\lambda ,j}} \neq \emptyset$, which occurs if and only if
\[
\{ \mu \in M : T (A)Q_\mu \neq 0\} \cap \{ \mu \in M : T (E_{\lambda,j})Q_\mu \neq 0\}\neq \emptyset
\]
by Lemma~\ref{central-ortho}. This last statement is also equivalent to
\[
\varphi (\lambda) \in \{ \mu \in M : T (A)Q_\mu \neq 0\}
\]
by $|T (E_{\lambda,j})^*|Q_\mu = |(T (E_{\lambda ,j})Q_\mu)^*|$ for each $\mu \in M$. Therefore, $\{ \mu \in M : T (A)Q_\mu \neq 0\} = \varphi (\Lambda_A)$.

Fix an index $\lambda \in \Lambda$. If $F = \sum_{j \in J_\lambda}F_{\lambda ,j} <Q_{\varphi (\lambda)}$, then there exists a rank-one projection $F_1 \leq Q_{\varphi (\lambda)}-F$. We note that $F_1 \in \B$ and, by~\eqref{eq:rk-1}, $|T^{-1} (F_1)^*|\perps  E_{\lambda ,j}$ for each $j \in J_\lambda$. Since $|T^{-1} (F_1)^*|$ is a scalar multiple of a rank-one projection, it follows that $|T^{-1}(F_1)^*|P_\lambda = 0$ and $T^{-1}(F_1)P_\lambda =0$. However, this means that
\[
\varphi (\lambda ) \not \in \varphi (\Lambda_{T^{-1} (F_1)}) = \{ \mu \in M : F_1Q_\mu \neq 0\} = \{ \varphi (\lambda)\} ,
\]
a contradiction. Thus, $F=Q_{\varphi (\lambda)}$ and $\dim Q_{\varphi (\lambda)}(\cH_\B) = \card (J_\lambda) = \dim P_\lambda (\cH)$. From this, we have an isometric isomorphism $U_\lambda \colon   P_\lambda (\cH) \to Q_{\varphi (\lambda)}(\cH_\B)$. Set $V_\lambda = U_\lambda P_\lambda =Q_{\varphi (\lambda)}U_\lambda P_\lambda \in \cB(\cH,\cH_\B)$.

Now, we can define an isometric isomorphism $V = \sum_{\lambda \in \Lambda}V_\lambda$ from $\cH$ onto $\cH_\B$ in the sense of the strong-operator topology. Let $\Psi (B) = V^*BV$ for each $B \in \B$. Then, we obtain
\begin{align*}
V^*BVP_\lambda = V^*BQ_{\varphi (\lambda)}U_\lambda P_\lambda
&=V^*Q_{\varphi (\lambda)}BQ_{\varphi (\lambda)}U_\lambda P_\lambda \\
&= P_\lambda U_\lambda^*Q_{\varphi (\lambda)}BQ_{\varphi (\lambda)}U_\lambda P_\lambda
\end{align*}
and
\begin{align*}
P_\lambda V^*BV = (V^*B^*VP_\lambda)^*
&= (P_\lambda U_\lambda^*Q_{\varphi (\lambda)}B^*Q_{\varphi (\lambda)}U_\lambda P_\lambda)^* \\
&= P_\lambda U_\lambda^*Q_{\varphi (\lambda)}BQ_{\varphi (\lambda)}U_\lambda P_\lambda \\
&= V^*BVP_\lambda
\end{align*}
for each $B \in \B$ and each $\lambda \in \Lambda$. Combining this with $B \in \cK (\cH_\B)$, we infer that $\Psi (B) \in \A$. Meanwhile, if $A \in \A$, then
\[
VAV^* Q_{\varphi (\lambda)} = Q_{\varphi (\lambda)}U_\lambda P_\lambda AP_\lambda U_\lambda^*Q_{\varphi (\lambda)} = Q_{\varphi (\lambda)}VAV^*
\]
for each $\lambda \in \Lambda$, which together with $A \in \cK (\cH)$ implies that $VAV^* \in \B$ and $A = \Psi (VAV^*)$. This shows that $\Psi$ is a $*$-isomorphism from $\B$ onto $\A$.

Finally, let $B \in \B$ and $\mu \in M$. Set $\mu = \varphi (\lambda )$. Since
\begin{align*}
\|\Psi (B)P_\lambda \|=\|V^*BVP_\lambda \| = \|BVP_\lambda \|
&= \|BQ_{\varphi (\lambda)}U_\lambda P_\lambda \| \\
&= \|P_\lambda U_\lambda^* Q_{\varphi (\lambda )}B^*\| \\
&= \|U_\lambda ^*Q_{\varphi (\lambda )}B^*\| \\
&= \|Q_{\varphi (\lambda )}B^*\| \\
&= \|BQ_{\varphi (\lambda )}\| ,
\end{align*}
it follows that $\Lambda_{\Psi (B)} = \varphi^{-1}(\{ \mu \in M : BQ_\mu \neq 0\})$. Therefore,
\[
\Lambda_{(\Psi \circ T)(A)} = \varphi^{-1}(\{ \mu \in M : T (A)Q_\mu \neq 0\}) = \Lambda_A
\]
for each $A \in \A$. This completes the proof because $A \in \sum_{\lambda \in \Lambda_0}\A P_\lambda$ if and only if $\Lambda_A \subset \Lambda_0$.
\end{proof}
\end{theorem}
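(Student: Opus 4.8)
The plan is to realize both algebras concretely and then manufacture a \emph{spatial} $*$-isomorphism $\Psi$ that carries the block decomposition of $\B$ back onto that of $\A$. First, by Corollary~\ref{pre-cpt} the algebra $\B$ is again compact, so I may represent it as $\B = \{Q_\mu : \mu \in M\}' \cap \cK(\cH_\B)$ for an orthogonal family $(Q_\mu)_{\mu \in M}$ with sum $I$. Since $T$ is a strong BJ isomorphism with $T(0)=0$, Lemma~\ref{rank-n} forces $\rank T(A) = \rank A$ for every $A$; in particular $T$ and $T^{-1}$ preserve the class of (scalar multiples of) rank-one elements. Fixing an orthonormal basis $(e_{\lambda,j})_{j \in J_\lambda}$ of each $P_\lambda(\cH)$ and letting $E_{\lambda,j}$ be the rank-one projection onto $\CC e_{\lambda,j}$, I set $F_{\lambda,j} = \|T(E_{\lambda,j})\|^{-1}|T(E_{\lambda,j})^*|$. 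By the rank-one criterion~\eqref{eq:rk-1} together with the preservation of $\perps$, the family $\{F_{\lambda,j}\}$ consists of mutually orthogonal rank-one projections, each sitting (by Remark~\ref{rem:SVD-blockwise}) inside a single block $Q_{\mu_{\lambda,j}}$.

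The crucial structural step is to show that $\mu_{\lambda,j}$ depends only on $\lambda$. For this I would use Lemma~\ref{central-ortho}, which expresses ``sharing a block'' purely in terms of rank-one elements and $\perps$: two operators have overlapping block supports iff some rank-one element is not strongly orthogonal to either. Because $T$ (and $T^{-1}$) transports both rank-one-ness and $\perps$, and because $E_{\lambda,j}$ and $E_{\lambda',j'}$ share a block exactly when $\lambda=\lambda'$, the same must hold for $F_{\lambda,j}$ and $F_{\lambda',j'}$. This yields a well-defined injection $\varphi\colon \Lambda \to M$ with $\varphi(\lambda)=\mu_{\lambda,j}$. Surjectivity follows by a leftover-block argument: any $\mu \notin \varphi(\Lambda)$ admits a rank-one projection $F \le Q_\mu$ orthogonal to every $F_{\lambda,j}$, whence $|T^{-1}(F)^*| \perps E_{\lambda,j}$ for all $\lambda,j$; since $\sum_{\lambda,j}E_{\lambda,j}=I$ and $|T^{-1}(F)^*|$ is a multiple of a rank-one projection, this forces $T^{-1}(F)=0$, a contradiction. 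Thus $\varphi$ is a bijection.

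With $\varphi$ fixed I would verify the block-tracking identity $\{\mu : T(A)Q_\mu \neq 0\} = \varphi(\Lambda_A)$ for each $A$, again via Lemma~\ref{central-ortho} applied to $A$ against the $E_{\lambda,j}$. I then need the exact dimension match $\dim Q_{\varphi(\lambda)}(\cH_\B) = \dim P_\lambda(\cH)$: the inequality $\ge$ is immediate from injectivity of $T$ on $(E_{\lambda,j})_j$, while equality amounts to $\sum_{j} F_{\lambda,j} = Q_{\varphi(\lambda)}$ --- otherwise a residual rank-one projection $F_1 \le Q_{\varphi(\lambda)} - \sum_j F_{\lambda,j}$ would, on pullback, violate the block-tracking identity. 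Choosing unitaries $U_\lambda\colon P_\lambda(\cH) \to Q_{\varphi(\lambda)}(\cH_\B)$ and assembling $V = \sum_\lambda U_\lambda P_\lambda$ (a spatial unitary in the strong-operator topology), I define $\Psi(B) = V^* B V$. Checking that $V$ intertwines $P_\lambda$ with $Q_{\varphi(\lambda)}$ shows $\Psi(B)$ commutes with each $P_\lambda$ and remains compact, so $\Psi$ restricts to a $*$-isomorphism $\B \to \A$. A short norm computation gives $\Lambda_{\Psi(B)} = \varphi^{-1}(\{\mu : BQ_\mu \neq 0\})$, hence $\Lambda_{(\Psi \circ T)(A)} = \Lambda_A$; since $A \in \sum_{\lambda \in \Lambda_0}\A P_\lambda$ is equivalent to $\Lambda_A \subset \Lambda_0$, this is precisely the asserted block-preservation.

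I expect the main obstacle to be the pair of facts that pin down $\varphi$ quantitatively: its well-definedness and bijectivity, and especially the \emph{exact} matching of block dimensions. Both rest on the interplay between Lemma~\ref{central-ortho}, which detects common blocks from the orthogonality data alone, and the rank/fullness arguments that forbid any spare room in a target block. Once these are secured, the passage from the index bijection $\varphi$ to the concrete unitary $V$ and the verification that $\Psi(B)=V^*BV$ is a $*$-isomorphism onto $\A$ are comparatively routine.
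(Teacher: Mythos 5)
Your proposal is correct and follows essentially the same route as the paper's own proof: the same rank-one projections $F_{\lambda,j}=\|T(E_{\lambda,j})\|^{-1}|T(E_{\lambda,j})^*|$, the same use of Lemma~\ref{central-ortho} to define the block bijection $\varphi$, the same leftover-block contradiction for surjectivity and residual-projection contradiction (against the block-tracking identity $\{\mu : T(A)Q_\mu\neq 0\}=\varphi(\Lambda_A)$) for the exact dimension match, and the same spatial unitary $V=\sum_\lambda U_\lambda P_\lambda$ giving $\Psi(B)=V^*BV$. The steps you single out as the crux are resolved precisely as in the paper, so there is nothing to add.
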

%%%%%%%%%%%%%%%%%%%%%%%%%%%%%%%%%%%%%%%%%%%%%%%%%%%%%%%%%
\begin{corollary}[\cite{AGKRTZ24}]\label{nonlinear-iso}

Let $\A$ and $\B$ be compact $C^*$-algebras. If $\A \sim_{BJ}^s\B$, then $\A$ and $\B$ are $*$-isomorphic.
\end{corollary}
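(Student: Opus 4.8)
The plan is to read this off directly from Theorem~\ref{nonlinear-1}, which already contains all the substance; the corollary is essentially a bookkeeping deduction. The hypothesis $\A \simbj \B$ means there is a strong BJ isomorphism $\Phi \colon \A \to \B$, and the goal is to manufacture an honest $*$-isomorphism between $\A$ and $\B$ out of it. The obstruction that Theorem~\ref{nonlinear-1} overcomes is that $\Phi$ need not be linear, nor even defined on a concretely represented algebra; so the first move is to put $\A$ into the concrete form required by that theorem.

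Concretely, I would first invoke the structure theorem for compact $C^*$-algebras recalled in the Remark opening the subsection on compact operators (based on \cite[Theorem 1.4.5]{Arveson}) to fix a $*$-isomorphism $\iota \colon \A \to \A_0$, where $\A_0 = \{ P_\lambda : \lambda \in \Lambda \}' \cap \cK(\cH)$ for a suitable Hilbert space $\cH$ and an orthogonal family of projections $(P_\lambda)_{\lambda \in \Lambda}$ with $\sum_\lambda P_\lambda = I$. The key routine observation, already noted just after Definition~\ref{Def:sBJ-iso}, is that any $*$-isomorphism preserves both the norm and the multiplication and is therefore a strong BJ isomorphism; applying this to $\iota^{-1}$ and composing, the map $T := \Phi \circ \iota^{-1} \colon \A_0 \to \B$ is again a strong BJ isomorphism. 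Since $\A_0$ is exactly of the form demanded in the hypotheses of Theorem~\ref{nonlinear-1} and $\B$ is compact, that theorem furnishes a linear $*$-isomorphism $\Psi \colon \B \to \A_0$.

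It then remains only to chain the maps: $\iota^{-1} \circ \Psi \colon \B \to \A$ is a composite of two linear $*$-isomorphisms, hence itself a $*$-isomorphism, so $\A$ and $\B$ are $*$-isomorphic, as claimed. I expect no genuine obstacle at this level, since the hard analysis --- reconstructing the block structure, matching the index sets $\Lambda$ and $M$ via the bijection $\varphi$, and building the implementing isometry $V$ --- is precisely what Theorem~\ref{nonlinear-1} accomplishes. The only point requiring any care is the verification that $*$-isomorphisms are strong BJ isomorphisms (so that pre-composing $\Phi$ with $\iota^{-1}$ preserves the hypothesis of Theorem~\ref{nonlinear-1}), and this is immediate from the defining inequality $\|x + yz\| \geq \|x\|$ together with the surjectivity and multiplicativity of $\iota^{-1}$.
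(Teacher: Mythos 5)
Your proposal is correct and is exactly the argument the paper intends: the corollary is stated without proof precisely because it follows from Theorem~\ref{nonlinear-1} by the bookkeeping you describe (represent $\A$ concretely via the structure theorem, note that $*$-isomorphisms are strong BJ isomorphisms so the hypothesis transfers through $\iota^{-1}$, then compose the resulting $*$-isomorphisms). No gaps; your one point of care --- checking that $\Phi \circ \iota^{-1}$ is still a strong BJ isomorphism --- is the right one, and it is immediate as you say.
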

%%%%%%%%%%%%%%%%%%%%%%%%%%%%%%%%%%%%%%%%%%%%%%%%%%%%%%%%%

\subsection{Reduction to blocks and wild nonlinear examples}

%%%%%%%%%%%%%%%%%%%%%%%%%%%%%%%%%%%%%%%%%%%%%%%%%%%%%%%%%
\begin{lemma}\label{reduction}

Let $\cH$ be a Hilbert space, let $(P_\lambda )_{\lambda \in \Lambda} \subset \cB (\cH)$ be an orthogonal family of projections with sum $I$, let $\A = \{ P_\lambda : \lambda \in \Lambda \}' \cap \cK (\cH)$. Suppose that $\Phi \colon  \A \to \A$ be a strong BJ isomorphism  and $\Phi ( \sum_{\lambda \in \Lambda_0}\A P_\lambda ) = \sum_{\lambda \in \Lambda_0}\A P_\lambda$ for each $\Lambda_0 \subset \Lambda$. Then, $\Phi|(\sum_{\lambda \in \Lambda_0}\A P_\lambda)$ is also a strong Birkhoff-James orthogonality preserver for each $\Lambda_0 \subset \Lambda$.
\begin{proof}
Let $\Lambda_0 \subset \Lambda$, and let $A,B \in \A$. Set $A_0 = \sum_{\lambda \in \Lambda_0}AP_\lambda$ and $B_0 = \sum_{\lambda \in \Lambda_0}BP_\lambda$. Then, $A_0\perps B_0$ in $\A$ if and only if $\|\|B_0\|^2A_0-B_0B_0^*A_0\|=\|A_0\|\|B_0\|^2$, which occurs if and only if $A_0\perps  B_0$ in $\sum_{\lambda \in \Lambda_0}\A P_\lambda$. This proves the lemma.
\end{proof}
\end{lemma}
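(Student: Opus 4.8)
The plan is to reduce the whole statement to the intrinsic nature of strong BJ orthogonality for $C^*$-subalgebras, as recorded in Remark~\ref{rem:subalgebra}. First I would fix $\Lambda_0 \subset \Lambda$ and identify the object $\B_0 := \sum_{\lambda \in \Lambda_0} \A P_\lambda$ concretely: using $\A P_\lambda = P_\lambda \cK(\cH) P_\lambda \cong \cK(\cH_\lambda)$, it is exactly the norm-closed block $\{A \in \A : \Lambda_A \subseteq \Lambda_0\}$, a closed two-sided ideal and in particular a $C^*$-subalgebra of $\A$. Since $\Phi$ is a bijection of $\A$ with $\Phi(\B_0) = \B_0$ by hypothesis, its restriction $\Phi|_{\B_0}$ is a bijection of $\B_0$ onto itself; so only preservation of $\perps$ in both directions remains to be checked.

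The heart of the matter is that for $A_0, B_0 \in \B_0$ the relation $A_0 \perps B_0$ does not depend on whether the multipliers $z$ in Definition~\ref{def:sBJ} are drawn from $\B_0$ or from the larger algebra $\A$. This is precisely Remark~\ref{rem:subalgebra}. Alternatively, one may invoke the norm characterization of Lemma~\ref{sBJ-char}: $A_0 \perps B_0$ is equivalent to $\| \|B_0\|^2 A_0 - B_0 B_0^* A_0 \| = \|A_0\| \|B_0\|^2$, and every quantity occurring here (the two norms and the product $B_0 B_0^* A_0$) is computed identically in $\B_0$ and in $\A$. Either formulation yields the same conclusion: strong BJ orthogonality within $\B_0$ agrees with strong BJ orthogonality within $\A$.

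With this invariance available, the conclusion follows by chaining equivalences. For $A_0, B_0 \in \B_0$ I would write
\[
A_0 \perps B_0 \text{ in } \B_0
\iff A_0 \perps B_0 \text{ in } \A
\iff \Phi(A_0) \perps \Phi(B_0) \text{ in } \A
\iff \Phi(A_0) \perps \Phi(B_0) \text{ in } \B_0 ,
\]
where the outer two equivalences are the subalgebra-invariance above (the last one using $\Phi(A_0), \Phi(B_0) \in \B_0$), and the middle one is the hypothesis that $\Phi$ is a strong BJ isomorphism on $\A$. Hence $\Phi|_{\B_0}$ is a strong BJ isomorphism of $\B_0$. I do not expect a genuine obstacle here: the only subtle point is the subalgebra-invariance of $\perps$, which is handled wholesale by Remark~\ref{rem:subalgebra} (equivalently by the intrinsic formula of Lemma~\ref{sBJ-char}); the identification of $\B_0$ as a $C^*$-subalgebra is routine from the description $\A = \{P_\lambda\}' \cap \cK(\cH)$.
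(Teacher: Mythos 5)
Your proposal is correct and follows essentially the same route as the paper: the paper's proof is exactly your second alternative, namely invoking the intrinsic norm characterization of Lemma~\ref{sBJ-char} to see that $A_0 \perps B_0$ holds in $\A$ if and only if it holds in $\sum_{\lambda \in \Lambda_0}\A P_\lambda$, and then the chain of equivalences is immediate. Your additional appeal to Remark~\ref{rem:subalgebra} is a valid (and equivalent) packaging of the same subalgebra-invariance, so there is nothing to fix.
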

%%%%%%%%%%%%%%%%%%%%%%%%%%%%%%%%%%%%%%%%%%%%%%%%%%%%%%%%%

%%%%%%%%%%%%%%%%%%%%%%%%%%%%%%%%%%%%%%%%%%%%%%%%%%%%%%%%%
Here is our main result on a $C^*$-algebra  of \emph{all compact operators}. Recall that a conjugate-linear bijection $U\colon \cH\to \cH$ such that $\|Ux\|=\|x\|$ for every $x$ is called an anti-unitary operator. A prototypical example is
$$J\colon x=\sum \alpha_i e_i\mapsto\bar{x}:= \sum \overline{\alpha}_i e_i,$$
i.e., a conjugation with respect to a fixed orthonormal basis $(e_i)_i$ of $\cH$. Notice that $J$ is an involutive isometry and $U':=UJ$ is a linear bijective isometry such that $U=U'J$.
\begin{theorem}\label{nonlinear-2}

Let $\cH$ be a Hilbert space with $\dim \cH \geq 3$, and let $\Phi \colon  \cK (\cH)\to \cK (\cH)$ be a bijection. Then, $\Phi$ is a strong BJ isomorphism
 if and only if there exist a unitary or anti-unitary operator $U$ on $\cH$ such that  $M_{|\Phi (A)^*|} = U(M_{|A^*|})$ and $\ker |\Phi (A)^*| = U(\ker |A^*|)$ for each $A \in \cK (\cH)$.
\begin{proof}
We assume that $\Phi$ is a strong Birkhoff-James orthogonality preserver (in both directions). Let $\Gamma_r=\Gamma_r (\cK (\cH))$  be the reduced strong ortho-digraph  of $\cK (\cH)$ introduced in \cite{Tan25}, with vertex set $V(\Gamma_r)=\{ A^\sim : A \in \cK (\cH)\}$ where
\[
A^\sim = \{ B \in \cK (\cH) : \text{$R_A^s = R_B^s$ and $L_A^s = L_B^s$}\}
\]
and declaring that  $A^\sim$ forms a directed edge to  $B^\sim $, in notation $A^\sim \to B^\sim$, if $A\perps  B$. We note that $0^\sim \to  0^\sim$ is the  only loop in $\Gamma_r (\cK (\cH))$. By \cite[Proposition 3.4]{Tan25}, the formula $f (A^\sim ) = \Phi (A)^\sim$ defines a graph automorphism on $\Gamma_r (\cK (\cH))$. Let
\begin{align*}
L_{A^\sim} &= \{ B^\sim \in \Gamma_r (\cK (\cH)) : B^\sim \to A^\sim \}\\
R_{A^\sim} &= \{ B^\sim \in \Gamma_r (\cK (\cH)) : A^\sim \to B^\sim \}
\end{align*}
for each $A,B \in \Gamma_r (\cK (\cH))$. Recall that \cite[Lemma 3.3]{Tan25} ensures the following for nonzero $A,B$:
\begin{itemize}
\item[(i)] $L_A^s \subset L_B^s$ if and only if $L_{A^\sim} \subset L_{B^\sim}$.
\item[(ii)] $R_A^s \subset R_B^s$ if and only if $R_{A^\sim} \subset R_{B^\sim}$.
\end{itemize}
Combining this with Lemmas~\ref{Lset-cpt} and \ref{proj-cpt}, we infer that $f$ preserves the equivalence classes of finite-rank projections on $\cH$ and the ranks of representative. Now, let $\mathbb{P}_1(\cH)$ be the family of rank-one projections on $\cH$, and let $g(E) = E^\sim$ for each $E \in \mathbb{P}_1(\cH)$. Since a rank-one projection $E$ is uniquely determined by its one-dimensional space $M_E$, Lemma~\ref{Rset-cpt} implies that each equivalence class $A^\sim$, $A\in\A$,  contains at most one rank-one projection. Then, $h =g^{-1} \circ f \circ g$ defines a bijection on $\mathbb{P}_1(\cH)$ such that $EF=0$ if and only if $h(E)h(F)=0$ (see~\eqref{eq:rk-1}). By the Uhlhorn theorem, we obtain a unitary or an anti-unitary operator on $\cH$ such that $h(E) = UEU^{-1}$ for each $E \in \mathbb{P}_1 (\cH)$. We note  (see Lemma~\ref{abs-polar}) that $A^\sim = |A^*|^\sim$ for each $A \in \cK (\cH)$, and that $f(E^\sim ) = |\Phi (E)^*|^\sim$ for each $E \in \mathbb{P}_1(\cH)$. It follows
\[
\|\Phi(E)\|^{-1}|\Phi (E)^*| = g^{-1}(|\Phi (E)^*|^\sim) = h (E) = UEU^{-1}
\]
for each $E \in \mathbb{P}_1 (\cH)$. In particular, $M_{|\Phi (E)^*|} = M_{UEU^{-1}} = U(E(\cH))$ and
\[
\ker |\Phi (E)^*| = \ker UEU^{-1} = \ker EU^{-1}=U(\ker E).
\]

Let $A \in \cK (\cH)$, let $\xi \in M_{|A^*|}$, and let $\zeta \in \ker |A^*|$. Then, $M_{E_\xi} = \mathbb{C}\xi \subset M_{|A^*|}$, which together with Lemma~\ref{Rset-cpt} shows that $U\xi \in U(E_\xi(\cH)) = M_{|\Phi (E_\xi)^*|} \subset M_{|\Phi (A)^*|}$. Meanwhile, since $E_\zeta\perps  |A^*|$, it follows that $\Phi (E_\zeta)\perps  \Phi (A)$. Combining this with $M_{\Phi (E_\zeta)} = U(E_\zeta (\cH)) =\mathbb{C}(U\zeta)$, Lemma~\ref{sBJ-pure} implies  $\omega_{U\zeta}(|\Phi (A)^*|)=0$ and $U\zeta \in \ker |\Phi (A)^*|$. Therefore, $U(M_{|A^*|}) \subset M_{|\Phi (A)^*|}$ and $U(\ker |A^*|) \subset \ker |\Phi(A)^*|$.

To show the reverse inclusion, we note that $\Phi^{-1}$ induces $f^{-1}$ which has the same properties as $f$, that $h^{-1}=g^{-1}\circ f^{-1} \circ g$, and that $h^{-1}(E) = U^{-1}EU$ for each $E \in \mathbb{P}_1(\cH)$. Thus, an argument exactly the same as that for $f$ proves that $U^{-1}(M_{|A^*|}) \subset M_{|\Phi^{-1}(A)^*|}$ and $U^{-1}(\ker |A^*|) \subset \ker |\Phi^{-1}(A)^*|$ for each $A \in \cK (\cH)$. Replacing $A$ with $\Phi (A)$ in these inclusion relations, we get $M_{|\Phi (A)^*|} = U(M_{|A^*|})$ and $\ker |\Phi (A)^*| = U(\ker |A^*|)$ for each $A \in \cK (\cH)$, as desired.

Conversely, suppose that $\Phi$ satisfies the latter condition, and that  $A,B \in \cK (\cH)\setminus \{0\}$. If $A\perps  B$, then, by Lemmas~\ref{cpt-pure-state} and \ref{sBJ-pure} there exists a unit vector $\xi \in \cH$ such that $\omega_\xi(|A^*|)=\|A\|$ and $\omega_\xi(|B^*|)=0$, or equivalently (using equality in Cauchy-Schwarz):
$$|A^*|\xi=\|A\|\xi\hbox{ and }\langle |B^*|\xi,\xi\rangle= 0.$$
Hence,
\[
\||B^*|^{1/2}\xi\|^2 = \langle |B^*|\xi,\xi\rangle  = 0
\]
and $|B^*|\xi=0$. Thus, $\xi\in M_{|A^*|}\cap\ker |A^*|$. It follows from the assumption that $U\xi \in M_{|\Phi (A)^*|} \cap \ker |\Phi (B)^*|$,  which implies that $|\Phi (A)^*| \perp_{BJ} |\Phi (B)^*|$ and $\Phi (A)\perps  \Phi (B)$. Meanwhile, since $M_{|\Phi^{-1}(A)^*|} = U^* (M_{|A^*|})$ and $\ker |\Phi^{-1} (A)^*| = U^* (\ker |A^*|)$ for each $A \in \cK (\cH)$, it also follows that $A\perps B$ whenever $\Phi (A)\perps  \Phi (B)$. Thus, $\Phi$ preserves strong Birkhoff-James orthogonality in both directions.
\end{proof}
\end{theorem}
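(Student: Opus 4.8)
The plan is to prove the two implications separately, with the sufficiency direction motivating the choice of invariants. The cornerstone is a purely geometric reformulation of strong BJ orthogonality on $\cK(\cH)$: combining Lemma~\ref{sBJ-pure} with the fact (Lemma~\ref{cpt-pure-state}) that every pure state of $\cK(\cH)$ is a vector state $\omega_\xi$, one sees that for nonzero $A,B$ the relation $A\perps B$ holds if and only if there is a unit vector $\xi$ with $|A^*|\xi=\|A\|\xi$ and $\langle|B^*|\xi,\xi\rangle=0$; equivalently, $M_{|A^*|}\cap\ker|B^*|\neq\{0\}$. Granting this, the converse is immediate: if $U$ is unitary or anti-unitary with $M_{|\Phi(A)^*|}=U(M_{|A^*|})$ and $\ker|\Phi(A)^*|=U(\ker|A^*|)$, then $U$ carries a witnessing vector $\xi$ for $A\perps B$ to a witnessing vector $U\xi$ for $\Phi(A)\perps\Phi(B)$, and symmetrically for $U^{-1}$, so orthogonality is preserved in both directions.

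For the forward direction I would first extract the combinatorial invariants that a strong BJ isomorphism must respect. Since $\perps$ depends only on $|A^*|$ (Lemma~\ref{abs-polar}), $\Phi$ induces a map on the equivalence classes $A^\sim$ determined by the pair $(L_A^s,R_A^s)$, and it preserves each of the inclusions $L_A^s\subseteq L_B^s$ and $R_A^s\subseteq R_B^s$. By Lemmas~\ref{Rset-cpt}, \ref{Lset-cpt}, \ref{proj-cpt} and the rank formula (Lemma~\ref{rank-n}), these inclusions encode exactly $M_{|A^*|}\subseteq M_{|B^*|}$, $\ker|A^*|\subseteq\ker|B^*|$, the property that $|A^*|/\|A\|$ is a projection, and $\rank A$. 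Hence $\Phi$ preserves the class of rank-one projections and, using that a rank-one projection is the unique such element in its $\sim$-class (Lemma~\ref{Rset-cpt}), descends to a genuine bijection $h$ of the rank-one projections $\mathbb{P}_1(\cH)$. By \eqref{eq:rk-1}, orthogonality of rank-one projections means $EF=0$, so $h$ preserves orthogonality in both directions; since $\dim\cH\ge 3$, Uhlhorn's theorem supplies a unitary or anti-unitary $U$ with $h(E)=UEU^{-1}$ for all $E\in\mathbb{P}_1(\cH)$.

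It then remains to upgrade the rank-one identity $\|\Phi(E)\|^{-1}|\Phi(E)^*|=UEU^{-1}$ to the stated equalities for arbitrary $A$. The idea is to probe $A$ by rank-one projections: for $\xi\in M_{|A^*|}$ one has $M_{E_\xi}=\CC\xi\subseteq M_{|A^*|}$, so $R_{E_\xi}^s\subseteq R_A^s$ (Lemma~\ref{Rset-cpt}) is preserved by $\Phi$, forcing $U\xi\in M_{|\Phi(A)^*|}$; dually, for $\zeta\in\ker|A^*|$ the relation $E_\zeta\perps A$ is preserved and the geometric reformulation places $U\zeta\in\ker|\Phi(A)^*|$. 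This yields $U(M_{|A^*|})\subseteq M_{|\Phi(A)^*|}$ and $U(\ker|A^*|)\subseteq\ker|\Phi(A)^*|$; applying the identical argument to $\Phi^{-1}$ (whose induced map on $\mathbb{P}_1(\cH)$ is $E\mapsto U^{-1}EU$) and substituting $\Phi(A)$ for $A$ promotes these inclusions to equalities.

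I expect the main obstacle to be the bookkeeping in the forward direction that licenses the reduction to rank-one projections: one must verify that $\Phi$ genuinely sends $\sim$-classes containing a rank-one projection to classes of the same type (not merely preserving the abstract inclusion lattice), that each such class contains exactly one rank-one projection so that $h$ is well defined and bijective, and that the relation $EF=0$ transfers correctly through the equivalence classes. This is precisely the role played by the reduced strong ortho-digraph formalism of \cite{Tan25}; marshalling Lemmas~\ref{Rset-cpt}, \ref{Lset-cpt} and \ref{proj-cpt} to pin down rank-one projections intrinsically is the delicate part, whereas both the final extension step and the converse are comparatively routine once the geometric reformulation of $\perps$ is in hand.
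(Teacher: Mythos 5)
Your proposal is correct and follows essentially the same route as the paper: the same reduction through the $\sim$-classes and the inclusion lattice of $L^s$/$R^s$ sets (via Lemmas~\ref{Rset-cpt}, \ref{Lset-cpt}, \ref{proj-cpt}, \ref{rank-n} and the ortho-digraph of \cite{Tan25}), Uhlhorn's theorem on the induced bijection of $\mathbb{P}_1(\cH)$, the probing of a general $A$ by rank-one projections together with the symmetric argument for $\Phi^{-1}$, and the same witnessing-vector argument (your reformulation $A\perps B\iff M_{|A^*|}\cap\ker|B^*|\neq\{0\}$ is exactly what Lemmas~\ref{cpt-pure-state} and \ref{sBJ-pure} yield and is how the paper handles the converse). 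The only cosmetic difference is that you state this geometric criterion upfront rather than unpacking it inside the converse, which does not change the substance of the proof.
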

\begin{remark}\label{rem:nonlinearBJpreservers-on-K(H)}
Clearly, if  $V$ is  unitary or anti-unitary on $\cH$, then  $VM_{|A^*|}=M_{|(VA)^*|}$ and $V\ker |A^*|=\ker |(VA)^*|$ for every $A\in\cK(\cH)$. Therefore,
Theorem~\ref{nonlinear-2} can be equivalently  formulated as follows: Suppose $\dim\cH\ge 3$. Then,  $\Phi \colon  \cK (\cH)\to \cK (\cH)$  is a strong BJ isomorphism if and only if there exists a unitary or anti-unitary $U$ on $\cH$ such that $\Psi:=U^{-1}\Phi$ satisfies
\begin{equation}\label{eq:PropertyP}
   M_{|\Psi (A)^*|} = M_{|A^*|}\quad \hbox{ and }\quad \ker |\Psi (A)^*| = \ker |A^*|.
\end{equation}
\end{remark}
%%%%%%%%%%%%%%%%%%%%%%%%%%%%%%%%%%%%%%%%%%%%%%%%%%%%%%%%%
Theorem~\ref{nonlinear-2} and Remark~\ref{rem:nonlinearBJpreservers-on-K(H)} allows us to construct many wild examples of nonlinear strong Birkhoff-James orthogonality preservers on $\cK (\cH)$.

%%%%%%%%%%%%%%%%%%%%%%%%%%%%%%%%%%%%%%%%%%%%%%%%%%%%%%%%%
\begin{remark}

Let $\cH$ be a Hilbert space, and let $(M,N)$ be a pair  of  closed subspaces of $\cH$. Then, there exists a positive operator $A \in \cK (\cH) \setminus \{0\}$ such that $M=M_A$ and $N = \ker A$ if and only if $M \neq \{0\}$ is finite-dimensional, $M \perp N$ and $N^\perp$ is separable. Indeed, we know that $M_A$ is finite-dimensional and $M_A = \{ \xi \in \cH : A\xi = \xi\}$ for every nonzero positive compact operator $A$ on $\cH$. Hence, $\langle \xi,\zeta \rangle =\langle A\xi,\zeta \rangle =\langle \xi,A\zeta \rangle =0$ whenever $\xi \in M_A$ and $\zeta \in \ker A$. Moreover, $N^\perp = \overline{A(\cH)}$ is separable.

Conversely, suppose that $M$ is finite-dimensional, $M \perp N$ and $N^\perp$ is separable. Let $(\xi_n)_n$ be an orthonormal basis for $N^\perp (\supset M)$ such that $\xi_1,\ldots ,\xi_m$ is a basis for $M$. Define an operator $A$ on $\cH$ by
\[
A\xi = \sum_{j=1}^m \langle \xi,\xi_j \rangle \xi_j + \sum_{n \geq m+1}2^{-n}\langle \xi,\xi_n\rangle\xi_n
\]
for each $\xi \in \cH$. It follows that $A \geq 0$, $M_A = M$ and $\overline{A(\cH)} = N^\perp$, that is, $\ker A = N$.
\end{remark}
%%%%%%%%%%%%%%%%%%%%%%%%%%%%%%%%%%%%%%%%%%%%%%%%%%%%%%%%%
\begin{example}\label{exa:wild}

Let $\cH$ be a Hilbert space with $\dim\cH\ge 1$ and let
\[
C(M,N) = \{ A \in \cK (\cH) : \text{$M=M_{|A^*|}$ and $N=\ker |A^*|$}\}
\]
for each pair of closed subspaces $(M,N)$ of $\cH$ such that $M (\neq \{0\})$ is finite-dimensional, $M \perp N$ and $N^\perp$ is separable. From the definition of $C(M,N)$, it turns out that $C(M_1,N_1) \cap C(M_2,N_2) = \emptyset$ unless $M_1=M_2$ and $N_1=N_2$. Now, let $f_{M,N}$ be any bijection from $C(M,N)$ onto $C(M,N)$, and let $\Phi (0)=0$ and $\Phi (A) = f_{M,N}(A)$ for each $A \in C(M,N)$. Since each $A \in \cK (\cH) \setminus \{0\}$ belongs to exactly one $C(M,N)$, the mapping $\Phi$ is a bijection on $\cK (\cH)$. Now, let $A \in C(M,N)$. Then, $M=M_{|A^*|}$, $N=\ker |A^*|$ and $\Phi (A) = f_{M,N}(A) \in C(M,N)$. It follows that $M_{|\Phi (A)^*|}=M_{|A^*|}$ and
\[
\ker |\Phi (A)^*| = \ker |A^*| .
\]
Thus, by Theorem~\ref{nonlinear-2}, $\Phi$ is preserves strong Birkhoff-James orthogonality in both directions.  Conversely, if $\dim\cH\ge 3$, then, as follows from Remark~\ref{rem:nonlinearBJpreservers-on-K(H)}, every strong BJ isomorphhism $\Phi\colon\cK(\cH)\to\cK(\cH)$ is of the above form when composed with a suitable unitary or anti-unitary $U$. We remark that, in particular, such $\Phi$ are typically  not linear at all in general.
\end{example}
%%%%%%%%%%%%%%%%%%%%%%%%%%%%%%%%%%%%%%%%%%%%%%%%%%%%%%%%%

\subsection{The linear case}

%%%%%%%%%%%%%%%%%%%%%%%%%%%%%%%%%%%%%%%%%%%%%%%%%%%%%%%%%

%%%%%%%%%%%%%%%%%%%%%%%%%%%%%%%%%%%%%%%%%%%%%%%%%%%%%%%%%

%%%%%%%%%%%%%%%%%%%%%%%%%%%%%%%%%%%%%%%%%%%%%%%%%%%%%%%%%
\begin{lemma}\label{rank-one-norm}

Let $\cH$ be a Hilbert space, let $(P_\lambda )_{\lambda \in \Lambda} \subset \cB (\cH)$ be an orthogonal family of projections with sum $I$, let $\A = \{ P_\lambda : \lambda \in \Lambda \}' \cap \cK (\cH)$. Suppose a linear $\Phi \colon  \A \to \A$ is a  strong BJ isomorphism  such that $\Phi ( \sum_{\lambda \in \Lambda_0}\A P_\lambda ) = \sum_{\lambda \in \Lambda_0}\A P_\lambda$ for each
 $\Lambda_0\subset\Lambda$. Then, there exists a positive number $\kappa$ such that $\|\Phi (A)\|=\kappa$ whenever $A \in \A$ is a rank-one operator with $\|A\|=1$.
\begin{proof}
Suppose that $\Lambda$ is not a singleton. Let $A$ and $B$ be rank-one operators with $\|A\|=\|B\|=1$. In this case, $|A^*|$ and $|B^*|$ are rank-one projections. If $\Lambda_A \neq \Lambda_B$, then $AB^* = |A^*||B^*|=0$, which implies that $|(A+B)^*| = |A^*|+|B^*|$ is a rank-two projection. By Lemmas~\ref{proj-cpt} and~\ref{rank-n}, $|(\Phi (A)+\Phi (B))^*|$ is a scalar multiple of a rank-two projection. Meanwhile, $|\Phi (A)^*|$ and $|\Phi (B)^*|$ are also rank-one by Lemma~\ref{rank-n}, and $\Lambda_{\Phi (A)} \neq \Lambda_{\Phi (B)}$ by the assumption  $\Phi ( \sum_{\lambda \in \Lambda_0}\A P_\lambda ) = \sum_{\lambda \in \Lambda_0}\A P_\lambda$.
 Hence, $|(\Phi (A)+\Phi (B))^*| = |\Phi (A)^*|+|\Phi (B)^*|$ and $|\Phi (A)^*||\Phi (B)^*|=0$, which ensures that $\|\Phi (A)\|=\|\Phi (B)\|$.

Now, fix an index $\lambda_0$ and a rank-one operator $A_0$ with $\|A_0\|=1$ and $\Lambda_{A_0} = \{ \lambda_0\}$. Let $A$ be a rank-one operator with $\|A\|=1$. If $\Lambda_A \neq \{\lambda_0\}$, then $\|\Phi (A)\|=\|\Phi (A_0)\|$ by the preceding paragraph. Suppose that $\Lambda_A = \{\lambda_0\}$. Since $\Lambda$ contains an element $\lambda$ other than $\lambda_0$, there exists a rank-one projection $E \in \A$ such that $\Lambda_E=\{\lambda\} \neq \{\lambda_0\}$. Again by the preceding paragraph, we derive $\|A\|=\|E\|=\|A_0\|$.

Next, suppose that $\Lambda$ is a singleton, that is, $\A = \cK (\cH)$. If $\dim\cH<\infty$, then $\A=\cK(\cH)=\cB(\cH)$ is a (finite-dimensional) von Neumann algebra and  the result follows from \cite[Theorem 5.7]{KST18}. In the sequel we assume $\dim\cH=\infty$.
 Take a pair of  unit vectors $\xi,\zeta \in \cH$ and let $E_\xi,E_\zeta$ be the associated rank-one projections. Suppose that $\xi \perp \zeta$. Then,  $E_\xi E_\zeta=0$, so
$E_\xi\perps E_\zeta$ and  $|E_\xi+E_\zeta|=|E_\xi|+|E_\zeta|$. By  Lemmas~\ref{proj-cpt}, \ref{rank-n}, and equivalence~\eqref{eq:rk-1}, $\Phi(E_\xi)$ and $\Phi(E_\zeta)$ are scalar multiples of orthogonal rank-one projections and their sum, $\Phi(E_\xi+E_\zeta)=\Phi(E_\xi)+\Phi(E_\zeta)$ is also a (rank-two) projection. As above we deduce that $\|\Phi(E_\xi)\|=\|\Phi(E_\zeta)\|$.

Now, we omit the assumption that $\xi \perp \zeta$. Since $\dim \cH \geq 3$, there exists a unit vector $\eta \in \cH$ such that $\eta \perp \xi$ and $\eta \perp \zeta$. From the preceding paragraph, it follows that $\|\Phi (E_\xi)\|=\|\Phi (E_\eta )\|=\|\Phi (E_\zeta)\|$. Set $\kappa = \|\Phi (E_{\xi_0})\|$ for an arbitrary unit $\xi_0$.  This does not depend on the choice of $\xi_0$.

Next, we consider an arbitrary rank-one operator $A = \xi \otimes \zeta$, where $\xi,\zeta$ are unit vectors.  Let $\eta$ be a unit  vector in $\cH$ such that $\xi \perp \eta$ and $\zeta \perp \eta$. Since
\begin{align*}
(A+E_\eta)(A+E_\eta)^*
&= (\xi \otimes \zeta + \eta \otimes \eta)(\zeta \otimes \xi +\eta \otimes \eta ) \\
&= \xi \otimes \xi + \langle \zeta,\eta\rangle (\xi \otimes \eta) + \langle \eta,\zeta\rangle (\eta \otimes \xi) +  \eta \otimes \eta \\
&= E_\xi + E_\eta ,
\end{align*}
it follows that $|\Phi (A+E_\eta)^*|$ is a scalar multiple of a rank-two projection. Write $\Phi (A) = \xi_1 \otimes \zeta_1$ and $\Phi (E_\eta ) = \xi_2\otimes \zeta_2$.  Since $\Phi (A)\perps  \Phi (E_\eta)$ by $A\perps  E_\eta$, we have $|\Phi (A)^*| \perp_{BJ} |\Phi (E_\eta)^*|$, that is, $\xi_1 \otimes \xi_1 \perp_{BJ} \xi_2 \otimes \xi_2$ and $\xi_1 \perp \xi_2$. Then, as in the first paragraph, we can show that $\|\xi_1\|=\|\xi_2\|$. Thus, $\|\Phi (A)\|=\|\xi_1\|=\|\xi_2\|=\|\Phi (E_\eta)\|=\kappa$.
\end{proof}
\end{lemma}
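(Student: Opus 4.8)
The plan is to isolate a single ``rank-two projection'' mechanism and then to deploy it, by transitivity, in every configuration. The two structural inputs I would use are that $\Phi$ preserves rank (by Lemma~\ref{rank-n}, since $\Phi$ is a strong BJ isomorphism, so in particular $\Phi$ sends rank-one operators to rank-one operators) and that it carries scalar multiples of projections to scalar multiples of projections (by Lemma~\ref{proj-cpt}, read through $|{\cdot}^*|$, as the defining condition there is phrased purely via the $R$- and $L$-neighbourhoods which $\Phi$ preserves). For a norm-one rank-one $A=\xi\otimes\zeta$ one has $|A^*|=E_\xi$, a rank-one projection. The core observation is: if $C,D$ are norm-one rank-one operators whose cross terms vanish, so that $(C+D)(C+D)^*=|C^*|^2+|D^*|^2$ is a genuine rank-two projection, then $C\perps D$ by~\eqref{eq:rk-1}, hence $\Phi(C)\perps\Phi(D)$ and the range projections of $\Phi(C),\Phi(D)$ are orthogonal (again by~\eqref{eq:rk-1}); meanwhile $\|\Phi(C+D)\|^{-1}|(\Phi(C)+\Phi(D))^*|$ is a rank-two projection by Lemmas~\ref{proj-cpt} and~\ref{rank-n}. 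Thus $(\Phi(C)+\Phi(D))(\Phi(C)+\Phi(D))^*$ is a scalar multiple of a rank-two projection, and restricting it to its two-dimensional range — spanned by the two orthogonal range directions of $\Phi(C)$ and $\Phi(D)$ — it acts as a scalar multiple of the identity. In the orthonormal frame given by those range directions its diagonal entries are exactly $\|\Phi(C)\|^2$ and $\|\Phi(D)\|^2$, so these coincide and $\|\Phi(C)\|=\|\Phi(D)\|$.

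First I would treat the case in which $\Lambda$ is not a singleton. If $A,B$ are norm-one rank-one operators supported in different blocks, then $\Lambda_A\neq\Lambda_B$ makes the cross terms vanish and $(A+B)(A+B)^*=|A^*|^2+|B^*|^2$ is a rank-two projection, so the core observation gives $\|\Phi(A)\|=\|\Phi(B)\|$; the block-preservation hypothesis is what guarantees that $\Phi(A),\Phi(B)$ again lie in different blocks, and hence that their range projections are orthogonal. To compare two rank-one operators supported in the \emph{same} block, I would interpose a norm-one rank-one operator $E$ supported in a third block — available because $\Lambda$ has at least two elements — and chain the two equalities through $E$. This shows $\|\Phi(\cdot)\|$ is constant on norm-one rank-one operators, and I let $\kappa$ be that constant.

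For $\Lambda$ a singleton we have $\A=\cK(\cH)$. If $\dim\cH<\infty$ then $\cK(\cH)=\cB(\cH)$ is a von Neumann algebra and the claim is already contained in \cite[Theorem 5.7]{KST18}, so I assume $\dim\cH=\infty$. I would first prove the equality for two rank-one projections $E_\xi,E_\zeta$ with $\xi\perp\zeta$: here the cross terms vanish, $E_\xi+E_\zeta$ is a rank-two projection, and the core observation applies. To remove the orthogonality hypothesis I would use $\dim\cH\ge3$: for arbitrary unit vectors $\xi,\zeta$ pick a unit vector $\eta$ with $\eta\perp\xi$ and $\eta\perp\zeta$ and chain $\|\Phi(E_\xi)\|=\|\Phi(E_\eta)\|=\|\Phi(E_\zeta)\|$, which defines $\kappa$. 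Finally, to pass from projections to a general norm-one rank-one $A=\xi\otimes\zeta$, I would again choose a unit vector $\eta$ orthogonal to both $\xi$ and $\zeta$; then the cross terms in $(A+E_\eta)(A+E_\eta)^*$ cancel, leaving the rank-two projection $E_\xi+E_\eta$, and the core observation yields $\|\Phi(A)\|=\|\Phi(E_\eta)\|=\kappa$.

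The step I expect to be the crux is the core observation, and specifically its numerical punchline: one must see that the orthogonality of the range projections of $\Phi(C)$ and $\Phi(D)$ — which is exactly what~\eqref{eq:rk-1} supplies from $\Phi(C)\perps\Phi(D)$ — is precisely what lets one read $\|\Phi(C)\|^2$ and $\|\Phi(D)\|^2$ as the diagonal entries of an operator that is forced (via Lemmas~\ref{proj-cpt} and~\ref{rank-n}) to be a scalar multiple of the identity on its two-dimensional range. The remaining difficulty is purely organizational: in each configuration of $A$ and $B$ one must exhibit a genuinely orthogonal auxiliary direction or block so that the cross terms vanish and the rank-two projection mechanism actually applies, which is exactly why the hypotheses ``$\Lambda$ has a second block'' and ``$\dim\cH\ge3$'' are needed.
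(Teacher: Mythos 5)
Your proposal is correct and follows essentially the same route as the paper's proof: the same case split (non-singleton $\Lambda$ handled by cross-block comparisons plus chaining through another block, versus $\A=\cK(\cH)$ with the finite-dimensional case delegated to \cite[Theorem 5.7]{KST18} and the infinite-dimensional case run through orthogonal projections, a third orthogonal direction, and finally a general rank-one $A=\xi\otimes\zeta$ paired with $E_\eta$), the same inputs (Lemmas~\ref{rank-n} and~\ref{proj-cpt} transferred through the strong BJ isomorphism, together with~\eqref{eq:rk-1}), and your ``core observation'' is exactly the diagonal-entry computation the paper performs in-line in its first and last paragraphs. The only blemish is verbal: when $A$ and $B$ share a block you say you interpose $E$ in ``a third block,'' whereas all that is needed (and all that exists when $\card\Lambda=2$) is a block different from the common one --- which is precisely what your parenthetical justification, and the paper's own argument, actually use.
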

%%%%%%%%%%%%%%%%%%%%%%%%%%%%%%%%%%%%%%%%%%%%%%%%%%%%%%%%%

%%%%%%%%%%%%%%%%%%%%%%%%%%%%%%%%%%%%%%%%%%%%%%%%%%%%%%%%%
\begin{lemma}\label{cpt-isometry}

Let $\A$ and $\B$ be compact $C^*$-algebras, and let $\Phi \colon   \A \to \B$ be a linear strong BJ isomorphism.
 Then, $\Phi$ is a scalar multiple of an isometry.
\begin{proof}
By Theorem~\ref{nonlinear-1}, we may assume that $\A = \B = \{ P_\lambda : \lambda \in \Lambda \}' \cap \cK (\cH)$ and $\Phi ( \sum_{\lambda \in \Lambda_0}\A P_\lambda ) = \sum_{\lambda \in \Lambda_0}\A P_\lambda$ for each $\lambda_0 \subset \Lambda$, where $\cH$ be a Hilbert space and $(P_\lambda )_{\lambda \in \Lambda} \subset \cB (\cH)$ is an orthogonal family of projections with sum $I$. In this case, Lemma~\ref{rank-one-norm} generates a positive number $\kappa$ such that $\|\Phi (A)\|=\kappa$ whenever $A \in \A$ is a rank-one operator with $\|A\|=1$. Let $A$ be an arbitrary norm-one element of $\A$, and let $A=|A^*|U$ be the polar decomposition for $A$.
 The double commutant theorem ensures that $U \in \{ P_\lambda : \lambda \in \Lambda \}'$.
 Since $\|A\|= \sup_\lambda \|AP_\lambda\|$ and $\sum_\lambda AP_\lambda$ converges to (a compact operator) $A$ in the norm topology, we can find a $\lambda \in \Lambda$ such that $\|AP_\lambda\|=\|A\|=1$.
 Let $\xi$ be a unit vector in $M_{|A^*P_\lambda|}=M_{|A^*|P_\lambda} \subset P_\lambda (\cH) \cap |A^*|(\cH)$. We note that $U$ is a partial isometry from $\overline{|A|(\cH)}$ onto $\overline{A(\cH)}=\overline{|A^*|(\cH)}$. Set $E$ be the rank-one projection onto $\mathbb{C}\xi$. We note that $E = \xi \otimes \xi \in \A$ and $EU \in \A$. This is because $\cK (\cH)$ is an ideal of $\cB (\cH)$. Moreover, we get $\|EU\|=1$ and $EU\perps  A-EU$. Indeed, since $UU^\ast$ is a projection onto $\overline{|A^*|(\cH)}\ni\xi$ we have $(EU)(EU)^* = EUU^*E = E$   and, since $AU^*=|A^*|$, further
\[
(A-EU)(A-EU)^* = (|A^*|-E)^2.
\]
 From this and $(|A^*|-E)\xi=\xi-\xi=0$, we obtain $|(EU)^*|\perps  |(A-EU)^*|$ and $EU\perps  A-EU$. Meanwhile, it follows from $M_{|(EU)^*|} = E(\cH) \subset M_{|A^*|}$ that $R_{EU}^s \subset R_A^s$ by Lemma~\ref{Rset-cpt}.

Now, we know that $\Phi (EU)$ is rank-one, $\Phi (EU)\perps  \Phi (A-EU)$ and $R_{\Phi (EU)} \subset R_{\Phi (A)}$. Set $\Phi (EU) = \zeta \otimes \eta$, where $\|\zeta \|=1$. Since $|\Phi (EU)^*| = \zeta \otimes \zeta$, again by Lemma~\ref{Rset-cpt}, we obtain $\mathbb{C}\zeta = M_{|\Phi (EU)^*|} \subset M_{|\Phi (A)^*|}$. Since $|\Phi (EU)^*|\perps  |\Phi (A-EU)^*|$, it follows that
\begin{align*}
0
&=\omega_\zeta (\Phi (A-EU)\Phi (A-EU)^*) \\
&= \omega_\zeta ((\Phi (A)-\zeta \otimes \eta)(\Phi (A)^*-\eta \otimes \zeta)) \\
&= \omega_\zeta (\Phi (A)\Phi (A)^* -\Phi (A)(\eta \otimes \zeta) -(\zeta \otimes \eta)\Phi (A)^* +\|\eta\|^2\zeta \otimes \zeta) \\
&= \|\Phi (A)^*\zeta \|^2 - \langle \eta,\Phi (A)^* \zeta \rangle - \langle \Phi (A)^*\zeta,\eta \rangle +\|\eta\|^2 \\
&= \langle \Phi (A)^*\zeta-\eta,\Phi (A)^*\zeta -\eta \rangle .
\end{align*}
Therefore, $\Phi (A^*)\zeta = \eta$ and
\[
\|\Phi (A)^*\|^2= \langle |\Phi (A)^*|\zeta,|\Phi (A)^*|\zeta \rangle  = \|\Phi (A)^*\zeta\|^2 = \|\eta\|^2
\]
by $\zeta \in M_{|\Phi (A)^*|}$, which implies that
\[
\|\Phi (A)\|=\|\Phi (A)^*\|=\|\eta\|=\|\Phi (EU)\|=\kappa ,
\]
where $\kappa$ is the value introduced in Lemma~\ref{rank-one-norm}. This proves that $\kappa^{-1}\Phi$ is an isometry.
\end{proof}
\end{lemma}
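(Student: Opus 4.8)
The plan is to first normalize the setup and then reduce the computation of $\|\Phi(A)\|$ to a single probe element. By Theorem~\ref{nonlinear-1}, composing $\Phi$ with the $*$-isomorphism $\Psi$ produced there changes neither norms nor the strong BJ relation, so I may assume $\A = \B = \{P_\lambda : \lambda \in \Lambda\}' \cap \cK(\cH)$ and that the resulting map carries each block ideal $\sum_{\lambda \in \Lambda_0}\A P_\lambda$ onto itself. Under this hypothesis, Lemma~\ref{rank-one-norm} furnishes a single constant $\kappa > 0$ with $\|\Phi(A)\| = \kappa$ for every norm-one rank-one $A$. It therefore suffices to show $\|\Phi(A)\| = \kappa$ for an arbitrary norm-one $A \in \A$, and the idea is to attach to such an $A$ a norm-one rank-one element whose image under $\Phi$ pins down $\|\Phi(A)\|$.

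For the probe, I would take the polar decomposition $A = |A^*|U$, noting via the double commutant theorem that the partial isometry $U$ lies in $\{P_\lambda : \lambda\in\Lambda\}'$. Since $A$ is compact and $\|A\| = \sup_\lambda\|AP_\lambda\|$, the norm is attained on some block, allowing me to choose a unit vector $\xi \in M_{|A^*|} \cap P_\lambda(\cH)$ and to set $E = \xi\otimes\xi \in \A$. The probe is $EU$, which is rank-one and lies in $\A$ because $\cK(\cH)$ is an ideal and both $E$ and $U$ commute with every $P_\lambda$. The two operator identities to verify are $(EU)(EU)^* = E$, using that $UU^*$ is the projection onto $\overline{|A^*|(\cH)} \ni \xi$, and $(A-EU)(A-EU)^* = (|A^*|-E)^2$, using $AU^* = |A^*| = UA^*$. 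Since $(|A^*|-E)\xi = 0$, the second identity yields $|(EU)^*| \perps |(A-EU)^*|$, i.e.\ $EU \perps A - EU$, while $M_{|(EU)^*|} = \mathbb{C}\xi \subset M_{|A^*|}$ gives $R_{EU}^s \subset R_A^s$ by Lemma~\ref{Rset-cpt}.

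Applying $\Phi$ and writing $\Phi(EU) = \zeta\otimes\eta$ with $\|\zeta\| = 1$, the preserved inclusion $R_{\Phi(EU)}^s \subset R_{\Phi(A)}^s$ together with Lemma~\ref{Rset-cpt} gives $\mathbb{C}\zeta = M_{|\Phi(EU)^*|} \subset M_{|\Phi(A)^*|}$, so $\zeta \in M_{|\Phi(A)^*|}$. The preserved orthogonality $\Phi(EU)\perps \Phi(A) - \Phi(EU)$, read through Lemmas~\ref{abs-polar} and~\ref{sBJ-pure}, forces the vector state $\omega_\zeta$ to annihilate $\Phi(A-EU)\Phi(A-EU)^*$. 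Expanding this expression with $\Phi(A-EU) = \Phi(A) - \zeta\otimes\eta$ collapses to $\|\Phi(A)^*\zeta - \eta\|^2 = 0$, whence $\Phi(A)^*\zeta = \eta$. Because $\zeta \in M_{|\Phi(A)^*|}$ gives $\|\Phi(A)^*\zeta\| = \|\Phi(A)\|$, I conclude $\|\Phi(A)\| = \|\eta\| = \|\Phi(EU)\| = \kappa$, so $\kappa^{-1}\Phi$ is an isometry.

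I expect the main obstacle to lie in the construction and verification of the probe $EU$. One must select the block so that the norm of $A$ is genuinely attained there and $\xi$ can be chosen inside $M_{|A^*|}$, and then carry out the polar-decomposition bookkeeping cleanly: the identity $(EU)(EU)^* = E$ depends on $UU^*$ fixing $\xi$, and $(A-EU)(A-EU)^* = (|A^*|-E)^2$ on the relations $AU^* = |A^*| = UA^*$, which are easy to misstate. Ensuring that $EU$ actually belongs to $\A$ rather than merely to $\cK(\cH)$ is also a point to handle with care. Once the probe is in place, the transfer through $\Phi$ is a routine rank-one vector-state computation.
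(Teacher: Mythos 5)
Your proposal is correct and takes essentially the same approach as the paper's own proof: the same reduction via Theorem~\ref{nonlinear-1} and Lemma~\ref{rank-one-norm}, the same probe $EU$ built from the polar decomposition $A=|A^*|U$ and a norm-attaining unit vector $\xi\in M_{|A^*|}\cap P_\lambda(\cH)$, the same identities $(EU)(EU)^*=E$ and $(A-EU)(A-EU)^*=(|A^*|-E)^2$, and the same rank-one vector-state computation via Lemmas~\ref{Rset-cpt} and~\ref{sBJ-pure} forcing $\Phi(A)^*\zeta=\eta$. The only deviations are cosmetic: you additionally record $UA^*=|A^*|$ (which the paper uses implicitly) and you write $\Phi(A)^*\zeta=\eta$ correctly where the paper's text has the typo $\Phi(A^*)\zeta=\eta$.
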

%%%%%%%%%%%%%%%%%%%%%%%%%%%%%%%%%%%%%%%%%%%%%%%%%%%%%%%%%

%%%%%%%%%%%%%%%%%%%%%%%%%%%%%%%%%%%%%%%%%%%%%%%%%%%%%%%%%

\begin{corollary}\label{lin-main}
Let $\A$ and $\B$ be compact $C^*$-algebras, and let $\Phi \colon   \A \to \B$ be a linear strong  BJ isomorphism.
 Then, there exist  nonzero scalars $\alpha,\beta \in \mathbb{C}$, a $*$-isomorphism $\Psi \colon   \A \to \B$ and  unitary elements $V \in \mathcal{M}(\A)$ and $W\in \mathcal{M}(\B)$ such that
$$\Phi (x)=\alpha\Psi(Vx)=\beta W\Psi(x).$$
\begin{proof}
Combine Theorem~\ref{cpt-isometry} with  Theorem~\ref{linear-isometry}.
\end{proof}
\end{corollary}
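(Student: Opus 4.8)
The plan is to assemble the conclusion directly from the two structural results already established, so that essentially no new analytic work is required. First I would invoke Lemma~\ref{cpt-isometry}: since $\A$ and $\B$ are compact $C^*$-algebras and $\Phi$ is a linear strong BJ isomorphism, there exists a positive scalar $\kappa$ such that $\Phi_0 := \kappa^{-1}\Phi$ is an isometry.

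The next step is to verify that $\Phi_0$ is again a strong BJ isomorphism. This rests only on the homogeneity of $\perps$: for any $x,y \in \A$ one has $x \perps y$ if and only if $\Phi(x) \perps \Phi(y)$, and since scaling both entries by the nonzero factor $\kappa^{-1}$ leaves the strong BJ relation unchanged, this is in turn equivalent to $\kappa^{-1}\Phi(x) \perps \kappa^{-1}\Phi(y)$, that is, $\Phi_0(x) \perps \Phi_0(y)$. Hence $\Phi_0$ is a linear isometry which is simultaneously a strong BJ isomorphism.

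I would then apply Theorem~\ref{linear-isometry} to $\Phi_0$. It furnishes unitary elements $V \in \mathcal{M}(\A)$ and $W \in \mathcal{M}(\B)$ together with a $*$-isomorphism $\Psi \colon \A \to \B$ such that $\Phi_0(x) = \Psi(Vx) = W\Psi(x)$ for each $x \in \A$. Multiplying through by $\kappa$ and putting $\alpha = \beta = \kappa$ yields $\Phi(x) = \alpha\Psi(Vx) = \beta W\Psi(x)$, which is the asserted form.

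Because all the substantive content is absorbed into Lemma~\ref{cpt-isometry} and Theorem~\ref{linear-isometry}, there is no genuine obstacle here; the only point needing (entirely routine) care is the homogeneity check confirming that rescaling an isometry does not destroy the strong BJ isomorphism property.
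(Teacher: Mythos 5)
Your proof is correct and is essentially identical to the paper's own argument: the paper's proof is simply ``Combine Theorem~\ref{cpt-isometry} with Theorem~\ref{linear-isometry}'', and you have carried out exactly that combination, supplying the routine but worthwhile verification that rescaling by $\kappa^{-1}$ preserves both linearity, bijectivity, and (via homogeneity of $\perps$) the strong BJ isomorphism property, so that Theorem~\ref{linear-isometry} applies to $\Phi_0$. Nothing further is needed.
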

%%%%%%%%%%%%%%%%%%%%%%%%%%%%%%%%%%%%%%%%%%%%%%%%%%%%%%%%%

%%%%%%%%%%%%%%%%%%%%%%%%%%%%%%%%%%%%%%%%%%%%%%%%%%%%%%%%%%

%%%%%%%%%%%%%%%%%%%%%%%%%%%%%%%%%%%%%%%%%%%%%%%%%%%%%%%%%%
%%%%%%%%%%%%%%%%%%%%%%%%%%%%%%%%%%%%%%%%%%%%%%%%%%%%%%%%%%
\section{Concluding Remarks}
In this section we will translate previous results into operator-theoretic language. We will also improve them in case a compact $C^*$-algebra $\A$ contains blocks of small size. We always view a compact $C^\ast$-algebra as a $c_0$-direct sum
$$\A=\bigoplus_{\lambda\in\Lambda}\cK(\cH_{\lambda})$$ and embedded into $\cK(\cH)$ where  $\cH=\bigoplus_{\lambda\in\Lambda}\cH_\lambda$. Recall that every $A\in\A$ has a singular value decomposition (SVD for short)
$$A=\sum\sigma_i(A)\xi_i\otimes\zeta_i;\qquad \|A\|=\sigma_1(A)\ge \sigma_2(A)\ge\dots\ge0,$$
where $(\xi_\lambda)_\lambda$ and $(\zeta_\lambda)_\lambda$ are two orthonormal bases of $\cH$ and where $\sigma_i(A)$ converge to $0$; moreover, each pair $(\xi_i,\zeta_i)$ belongs to $\cH_\lambda$ for some index $\lambda$ (c.f.~Remark~\ref{rem:SVD-blockwise}). Also, $M_A=\mathrm{span}\{\zeta_1,\dots,\zeta_k\}$ where $\sigma_1(A)=\dots=\sigma_k(A)>\sigma_{k+1}(A)$ and $|A^\ast|=\sum\sigma_i(A) \xi_i\otimes\xi_i$, hence $M_{|A^\ast|}=AM_A$.\medskip

We start with  characterization of strong BJ orthogonality between compact operators.
\begin{lemma} Let $A,B\in \A=\bigoplus\cK(\cH_\lambda)$. Then, $A\perps B$ if and only if there exists an index $\lambda$ and a  unit vector $\zeta\in\cH_\lambda$ such that $\|A\zeta\|=\|A\|$ and $B^\ast A\zeta=0$.
\end{lemma}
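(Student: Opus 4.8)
The plan is to reduce everything to the pure-state criterion of Lemma~\ref{sBJ-pure} and then feed in the explicit form of the pure states of a compact $C^*$-algebra from Lemma~\ref{cpt-pure-state}. The case $A=0$ is immediate on both sides: $0\perps B$ always holds, and any unit vector satisfies $\|A\zeta\|=\|A\|=0$ and $B^*A\zeta=0$, so I would assume $A\neq0$ throughout.

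By Lemma~\ref{sBJ-pure}, $A\perps B$ holds precisely when there is a pure state $\rho\in\cP_{|A^*|}$ with $\rho(|B^*|)=0$. By Lemma~\ref{cpt-pure-state}, every such $\rho$ is a vector state $\omega_\xi$ with $\xi$ a unit vector lying in a single block, i.e.\ $\xi\in\cH_\lambda$ for some $\lambda$. First I would translate the two conditions on $\omega_\xi$ into statements about $\xi$: since $0\le|A^*|\le\|A\|$, the equality $\langle|A^*|\xi,\xi\rangle=\|A\|$ forces $|A^*|\xi=\|A\|\xi$, so $\omega_\xi\in\cP_{|A^*|}$ is equivalent to $\xi\in M_{|A^*|}$; and from $\||B^*|\xi\|^2=\langle|B^*|^2\xi,\xi\rangle=\|B^*\xi\|^2$ the vanishing $\omega_\xi(|B^*|)=0$ is equivalent to $B^*\xi=0$. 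Thus $A\perps B$ is equivalent to the existence of a unit vector $\xi\in M_{|A^*|}$, lying in a single block $\cH_\lambda$, with $B^*\xi=0$.

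It then remains to match left singular vectors $\xi\in M_{|A^*|}$ with right singular vectors $\zeta\in M_A$, which is exactly the correspondence $M_{|A^*|}=AM_A$ recorded just above. Concretely I would set $\zeta=\|A\|^{-1}A^*\xi$; using $AA^*=|A^*|^2$ one checks directly that $\zeta$ is a unit vector, that $A\zeta=\|A\|\xi$ (so $\|A\zeta\|=\|A\|$, i.e.\ $\zeta\in M_A$), and conversely that $\xi=\|A\|^{-1}A\zeta\in M_{|A^*|}$ whenever $\zeta\in M_A$. Because every $A\in\A$ commutes with each $P_\lambda$, the operators $A$ and $A^*$ preserve each block, so this correspondence keeps $\xi$ and $\zeta$ in the same $\cH_\lambda$. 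Finally $A\zeta=\|A\|\xi$ gives $B^*A\zeta=\|A\|B^*\xi$, so $B^*\xi=0$ is the same as $B^*A\zeta=0$. Combining the two directions through Lemma~\ref{sBJ-pure} produces the stated equivalence.

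The only delicate point is the block bookkeeping in this last step: one must be sure that the vector furnished by Lemma~\ref{cpt-pure-state} genuinely lives in a single $\cH_\lambda$, and that passing from $\xi$ to $\zeta=\|A\|^{-1}A^*\xi$ does not leave that block. Both are taken care of by the commutation $AP_\lambda=P_\lambda A$; the remaining computations are just unwindings of the equality case in the Cauchy--Schwarz inequality.
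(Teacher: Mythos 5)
Your proof is correct and follows essentially the same route as the paper's: reduce via Lemma~\ref{sBJ-pure} to finding a pure state $\rho\in\cP_{|A^*|}$ killing $|B^*|$, identify such states as single-block vector states by Lemma~\ref{cpt-pure-state}, and pass between $\zeta\in M_A$ and $\xi\in M_{|A^*|}$ through the correspondence $M_{|A^*|}=AM_A$. The only difference is cosmetic: the paper cites that correspondence as recorded from the SVD, while you verify it directly via $\zeta=\|A\|^{-1}A^*\xi$ and $\xi=\|A\|^{-1}A\zeta$, and you also make explicit the $A=0$ case and the block bookkeeping ($AP_\lambda=P_\lambda A$) that the paper's one-line proof leaves implicit.
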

\begin{proof} $\|A\zeta\|=\|A\|$ if and only if $\xi:=A\zeta\in AM_A=M_{|A^\ast|}$ or, equivalently, $\omega_{\zeta}(|A^\ast|)=\langle |A^\ast|\xi,\xi\rangle=\|A\|$. The rest follows from Lemmas~\ref{cpt-pure-state} and \ref{sBJ-pure}.
\end{proof}

Next is a restatement (and extension to  all Hilbert spaces) of Theorem~\ref{nonlinear-2} and Remark~\ref{rem:nonlinearBJpreservers-on-K(H)}. Let $R(A):=A(\cH)$ be the range of operator $A$. We say $\Phi\colon\cK(\cH)\to\cK(\cH)$ has property ${\mathcal P}$ if
$$  BM_B=AM_A\ \text{and}\ \overline{R(B)}=\overline{R(A)},\ \text{where}\ B=\Phi(A).
$$
Observe that this is equivalent to  the two equalities in~\eqref{eq:PropertyP}. We  also let $\PP(\cH):=\{[x]: x\in\cH\setminus\{0\}\}$ be the projective space over $\cH$, and declare projective points $[x],[y]$ orthogonal if $\langle x,y\rangle=0$.
 \begin{theorem}\label{thm:singleblock-a}
    Let $\Phi\colon\cK(\cH)\to \cK(\cH)$ be a strong BJ isomorphism.
    \begin{itemize}
    \item[(i)] If $\dim \cH\ge 3$, then there exists a  unitary or anti-unitary operator $U\colon \cH\to \cH$ such that $U^{-1}\circ \Phi$ has property  $\mathcal{P}$.
\item[(ii)]    If $\dim \cH=2$, then there  exist a bijection $\phi\colon\PP(\cH)\to\PP(\cH)$ which maps orthogonal projective points onto orthogonal projective points such that
 $$     B M_{B} = \phi(A M_A)\quad \hbox{ and } \quad    R(B)=\phi(R(A)),\ \text{where}\ B=\Phi(A).$$
\item[(iii)]    If $\dim \cH=1$, then any bijection which fixes $0$ is a strong BJ isomorphism on $\cK(\cH)$.
    \end{itemize}
\end{theorem}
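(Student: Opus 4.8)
The plan is to dispatch the three cases separately: (iii) by a direct computation, (i) as a verbatim restatement of Theorem~\ref{nonlinear-2}, and (ii) by re-running the proof of Theorem~\ref{nonlinear-2} and excising the single step that needs $\dim\cH\ge3$. For (iii), with $\dim\cH=1$ we have $\cK(\cH)=\CC$, and I would first record that strong BJ orthogonality there is degenerate: for $x,y\in\CC$, choosing $z=-x/y$ when $y\neq0$ shows $x\perps y$ forces $x=0$, while $x\perps0$ always holds; hence $x\perps y$ if and only if $x=0$ or $y=0$. For a bijection $\Phi$ fixing $0$ one has $\Phi(x)=0\Leftrightarrow x=0$, so both $x\perps y$ and $\Phi(x)\perps\Phi(y)$ reduce to the same condition ``$x=0$ or $y=0$'', and every such $\Phi$ is automatically a strong BJ isomorphism.

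For (i) I would use the identities recalled in the opening of this section, namely $M_{|A^*|}=AM_A$ and $\overline{R(A)}=\overline{|A^*|(\cH)}$, together with $\ker|A^*|=\overline{R(A)}^\perp$. Under these identifications the two equalities in~\eqref{eq:PropertyP} for the map $U^{-1}\circ\Phi$ read precisely $BM_B=AM_A$ and $\overline{R(B)}=\overline{R(A)}$ with $B=U^{-1}\Phi(A)$, which is the definition of property $\mathcal P$. Thus (i) is nothing but Remark~\ref{rem:nonlinearBJpreservers-on-K(H)} reread in operator-theoretic language.

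The substance is (ii). I would follow the proof of Theorem~\ref{nonlinear-2} verbatim through the construction of the bijection $h$ on the set of rank-one projections with $EF=0\Leftrightarrow h(E)h(F)=0$; that portion rests on the reduced ortho-digraph of~\cite{Tan25} and on Lemmas~\ref{Rset-cpt}, \ref{Lset-cpt}, \ref{proj-cpt}, \ref{rank-n}, none of which restricts the dimension. In place of the Uhlhorn step I would define $\phi\colon\PP(\cH)\to\PP(\cH)$ by $\phi([\xi])=R(h(E_\xi))$; since a zero product of rank-one projections means orthogonality of their one-dimensional ranges, $\phi$ is a bijection preserving orthogonality of projective points, and I extend it to subspaces by $\phi(M)=\bigcup_{[\xi]\subset M}\phi([\xi])$, which in dimension two sends points to points and $\cH$ to $\cH$. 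The max-set argument of Theorem~\ref{nonlinear-2} then transports verbatim: for $\xi\in M_{|A^*|}$ one has $M_{E_\xi}\subset M_{|A^*|}$, so $R_{E_\xi}^s\subset R_A^s$ and hence $R_{\Phi(E_\xi)}^s\subset R_{\Phi(A)}^s$, whence Lemma~\ref{Rset-cpt} gives $\phi([\xi])=M_{|\Phi(E_\xi)^*|}\subset M_{|\Phi(A)^*|}$; varying $\xi$ and using $\Phi^{-1}$ for the reverse inclusion yields $BM_B=\phi(AM_A)$. Symmetrically, for $\zeta\in\ker|A^*|$ one has $E_\zeta\perps A$, so $\Phi(E_\zeta)\perps\Phi(A)$, and Lemma~\ref{sBJ-pure} forces $\phi([\zeta])\subset\ker|\Phi(A)^*|$, giving $\ker|\Phi(A)^*|=\phi(\ker|A^*|)$.

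The step I expect to be the genuine obstacle is converting this last kernel identity into the required range identity $R(B)=\phi(R(A))$, since $\phi$ is only known to preserve orthogonality pointwise and need not intertwine taking orthocomplements of subspaces. Here I would lean on $\dim\cH=2$: when $A$ is full rank, $\ker|A^*|=\{0\}$ forces $\ker|B^*|=\{0\}$ and both ranges equal $\cH=\phi(\cH)$; when $A$ is rank one, $R(A)=[\xi]$ and $\ker|A^*|=[\eta]$ are the unique orthogonal pair, so from $\ker|B^*|=\phi([\eta])$ and $\phi([\xi])\perp\phi([\eta])$ I get $\overline{R(B)}=\phi([\eta])^\perp=\phi([\xi])$, the last equality because in a two-dimensional space a point is the orthocomplement of its orthogonal partner. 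Assembling the two cases gives $R(B)=\phi(R(A))$ and finishes (ii).
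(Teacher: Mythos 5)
Your proof is correct, and for parts (i) and (iii) it coincides with the paper's treatment: (i) is indeed nothing more than Remark~\ref{rem:nonlinearBJpreservers-on-K(H)} reread through the identities $M_{|A^*|}=AM_A$ and $\ker|A^*|=\overline{R(A)}^{\perp}$, and (iii) is the same trivial computation (in $\CC$ one has $x\perps y$ iff $x=0$ or $y=0$). For (ii) you take a genuinely more direct route than the paper. The paper, after extracting the projective bijection $\phi$ from the rank-one-projection map $h$ exactly as you do, does \emph{not} rerun the transport argument for general $A$; instead it invokes the partition of $\cK(\cH)\setminus\{0\}$ into the classes $C(M,N)$ of Example~\ref{exa:wild}, constructs an auxiliary strong BJ isomorphism $\Psi$ carrying $C(M,N)$ bijectively onto $C(\phi(M),\phi(N))$, and observes that $\Phi':=\Psi^{-1}\circ\Phi$ fixes every rank-one projection up to a positive scalar, so the concluding portion of the proof of Theorem~\ref{nonlinear-2} applies with $U=I$ and gives property $\mathcal{P}$ for $\Phi'$. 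You instead replay that concluding portion verbatim with $\phi$ in place of $U$; this works because every subspace identity there is established line-by-line (via Lemmas~\ref{Rset-cpt} and~\ref{sBJ-pure}), and in dimension two the only subspaces are $\{0\}$, lines and $\cH$, so your union-of-lines extension of $\phi$ is harmless (do add the convention $\phi(\{0\})=\{0\}$, as the paper does, so that the full-rank case reads correctly; your $\Phi^{-1}$-argument that $\ker|B^*|$ contains no line then closes it). Your approach buys self-containedness: the paper's reduction quietly requires the sufficiency half of Theorem~\ref{nonlinear-2}, adapted from a linear $U$ to the nonlinear $\phi$, in order to know that $\Psi$ is a strong BJ isomorphism, whereas you never construct $\Psi$ at all; the paper's factorization buys in return an explicit normal form $\Phi=\Psi\circ\Phi'$ with $\Phi'$ enjoying property $\mathcal{P}$, which is what gets reused in assembling Theorem~\ref{thm:general-a}. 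Finally, your closing step --- deriving $R(B)=\phi(R(A))$ from the kernel identity via the observation that in dimension two $\phi$ automatically intertwines orthocomplements of lines (each line has a unique orthogonal partner) --- is precisely the translation the paper leaves implicit when passing from the $C(M,N)$ data to the statement of (ii), and you are right that this is the one place where $\dim\cH=2$ is genuinely used.
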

\begin{proof}[Sketch of the proof]
    The arguments from the proof of Theorem~\ref{nonlinear-2} work regardless of the dimension of $\cH$ and show that $\Phi$ induces a bijective orthogonality-preserving map $\phi$ on $\PP(\cH)$ (where, with a slight abuse, we identified rank-one projection $\xi\otimes\xi\in\PP_1(\cH)$ with its range, $\CC\xi\in\PP(\cH)$).  If $\dim\cH\le 2$,   Uhlhorn's theorem is no longer available and we simply replace $M_{|\Phi(E)^\ast|}=UM_{E}$  and $\ker |\Phi(E)^\ast|=U(\ker E)$, $E=\xi\otimes\xi$  with an  equivalent statement $\Phi(E)M_{\Phi(E)}= \phi(EM_E)$ and $R(\Phi(E))=\phi(R(E))$. Following Example~\ref{exa:wild} we partition $\cK(\cH)$ into disjoint union of  sets $C(M,N)$, and define a bijection $\Psi\colon \cK(\cH)\to\cK(\cH)$ by
    $\Psi(0)=0$ and  mapping $C(M,N)$ bijectively onto $C(\phi(M),\phi(N))$ (with convention $\phi(0)=0$ and $\phi(\cH)=\cH$). This $\Psi$ is a strong BJ isomorphism, and a strong BJ isomorphism  $\Phi':=\Psi^{-1}\circ\Phi$ will satisfy $\|\Phi'(E)\|^{-1}|\Phi'(E)^\ast|=|E|$   for every rank-one projection  $E$. The rest follows arguments of the proof of Theorem~\ref{nonlinear-2}  with $U=I$.
\end{proof}
\begin{remark}\label{rem:examples-a}
(a) Alternative way to  obtain (possibly nonbijective) maps satisfying property ${\mathcal P}$ is with the help of SVD: Choose  any  operator  written in its SVD,
$$A=\sum \sigma_i(A) \xi_i\otimes \zeta_i;\qquad \sigma_1(A)=\dots=\sigma_k(A)>\sigma_{k+1}(A)\ge \dots\ge0;$$
 select unitaries $U_A,V_A$ which depend on $A$, a nonzero scalar $\gamma_A$,  and select a $c_0$ sequence $\tau_{k+1}(A), \tau_{k+2}(A),\ldots <\sigma_1(A)$, which also depends on $A$. Assume in addition that $AM_A=\mathrm{span}\, \{\xi_1,\dots,\xi_k\}$ and $R(A)\ominus A M_A=\mathrm{span}\,\{\xi_{k+1},\xi_{k+2},\dots\}$ are both invariant for $U_A$.  Then, the map
$$A\mapsto \gamma_A U_A\Bigl(\sigma_1(A) \sum_1^k \xi_i\otimes \zeta_i +\sum_{k+1}^\infty \tau_i (A)\xi_i\otimes \zeta_i\Bigr)V_A^\ast  $$
is the most general form of those  maps. It can more shortly be written as
$$A\mapsto \gamma_A U_AP_AAV_A^\ast,$$ where $P_A$ is a diagonal positive-definite map on orthonormal basis $(\xi_i)_i$ such that $P_A(\sigma_i(A)\xi_i)=\tau_i(A) \xi_i$ and $\tau_{k+i}(A)<\tau_k(A)=\dots=\tau_1(A)=\sigma_1(A)$. We warn that $\sum\tau_i(A)\xi_i\otimes \zeta_i$  might not be a SVD, since we allow also nonmonotonic sequences for  $\tau_i(A)$.

(b) We can also list all the strong BJ isomorphism  in the case of two-dimensional space $\cH$. The orthogonality preserving bijections $\phi\colon\PP(\cH)\to\PP(\cH)$ are simply the  permutations of the partition
$$\PP(\cH)=\bigcup\{[(\alpha,\beta)],[(-\bar{\beta},\bar{\alpha})]\},\qquad (\alpha,\beta)\in\CC^2\setminus\{0\}.$$ Given  any such  orthogonality preserving bijection $\phi\colon\PP(\cH)\to\PP(\cH)$, let $\varphi_\phi$ be a homogeneous  map from  $\cH$ onto itself,  such that $\|\varphi_\phi(\xi)\|=\|\xi\|$ (i.e., it maps a unit ball of $\cH$ onto itself) and $[\varphi_\phi(\xi/\|\xi\|)]=\phi([\xi])$. Define $\Phi_\phi(0)=0$ and extend it to
a strong BJ isomorphism (it maps $C(M,N)$ onto $C(\phi(M),\phi(N))$; see Example~\ref{exa:wild}) on $\cK(\cH)$, via
\begin{equation}\label{eq:induced=by-phi}
\Phi_\phi\colon A=\sum \sigma_i(A) \xi_i\otimes\zeta_i\mapsto \sum \sigma_i(A) \varphi_\phi(\xi_i)\otimes\zeta_i.
\end{equation}
We may further compose $\Phi_\phi$ with a map $A\mapsto \gamma_A U_AP_AAV_A^\ast$ as in part (a); one checks that this  composition is (an arbitrary) bijection on  the set of scalar multiples of unitaries, and maps nonzero $A=\sum_1^2\sigma_1(A) \xi_i\otimes \zeta_i$, written in its SVD, with $\sigma_2(A)<\sigma_1(A)$ into
$$\Phi_\phi(A)= \gamma_A\bigl( \sigma_1(A) \varphi_\phi(\xi_1)\otimes \zeta_1+ \tau_2(A) \varphi_\phi(\xi_2)\otimes \zeta_2\bigr) V_A,\quad \sigma_1(A)>\sigma_2(A)$$
for some unitary $V_A$, a nonzero scalar $\gamma_A$ and $\tau_2(A)<\sigma_1(A)$.
These are all the maps from (ii) of Theorem~\ref{eq:PropertyP}.
\end{remark}
We call a map $\Phi_\phi$ from \eqref{eq:induced=by-phi} to be \emph{induced by orthogonality-preserving bijection~$\varphi$}; observe that $\|\Phi_\phi(A)\|=\|A\|$.

\begin{theorem}\label{thm:general-a} Let $\A=\bigoplus \cK(\cH_\lambda)$ be a compact $C^*$-algebra and $\Phi\colon\A\to\A$ a strong BJ isomorphism. Then, $\Phi$ is composed of
\begin{itemize}
    \item[(i)] A  permutation of blocks which respects their dimensions.
\item[(ii)] A map which is a multiplication by unitaries  on some blocks and multiplication by anti-unitaries on other blocks.
\item[(iii)] A map which is identity on all blocks with $\dim  \cH_\lambda\neq2$ and is induced by some orthogonality-preserving bijections $\phi_\lambda\colon\PP(\cH_\lambda)\to \PP(\cH_\lambda)$ on  blocks with $\dim \cH_\lambda=2$.
  \item[(iv)] A map with property ${\mathcal P}$ which maps norm-achieving blocks to norm-achieving blocks.
\end{itemize}
\end{theorem}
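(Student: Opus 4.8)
The plan is to peel off the four factors in the listed order, using the single-block classification Theorem~\ref{thm:singleblock-a} as the engine and the block reduction of Lemma~\ref{reduction} as the glue. First I would apply Theorem~\ref{nonlinear-1} with $\B=\A$ to produce a $*$-isomorphism $\Psi_0\colon\A\to\A$ for which $\Psi_0\circ\Phi$ preserves every $\sum_{\lambda\in\Lambda_0}\A P_\lambda$; thus $\Phi=\Psi_0^{-1}\circ\Phi_1$ with $\Phi_1:=\Psi_0\circ\Phi$ block-preserving. Any $*$-automorphism of $\bigoplus_\lambda\cK(\cH_\lambda)$ decomposes as a block permutation (matching dimensions, since the blocks are $*$-isomorphic only when equidimensional) composed with a block-preserving inner automorphism; the permutation is the factor~(i), and folding the inner part into $\Phi_1$ (which stays a strong BJ isomorphism) reduces everything to producing factors~(ii)--(iv) from a block-preserving $\Phi_1$. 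By Lemma~\ref{reduction} each restriction $\Phi_1|_\lambda:=\Phi_1|_{\cK(\cH_\lambda)}$ is a strong BJ isomorphism, so Theorem~\ref{thm:singleblock-a} applies on every block.

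Next I would read off the local factors. For $\dim\cH_\lambda\ge3$, Theorem~\ref{thm:singleblock-a}(i) gives a unitary or anti-unitary $U_\lambda$ with $P_\lambda:=U_\lambda^{-1}\circ\Phi_1|_\lambda$ of property~$\mathcal{P}$; I place $U_\lambda$ in the type-(ii) factor $M_U$ and make the type-(iii) factor trivial there. For $\dim\cH_\lambda=2$, Theorem~\ref{thm:singleblock-a}(ii) (made explicit in Remark~\ref{rem:examples-a}(b)) supplies an orthogonality-preserving bijection $\phi_\lambda$ of $\PP(\cH_\lambda)$; I contribute the induced map $\Phi_{\phi_\lambda}$ (a strong BJ isomorphism by Example~\ref{exa:wild}) to the type-(iii) factor $B_\phi$ and simply \emph{define} $P_\lambda:=\Phi_{\phi_\lambda}^{-1}\circ\Phi_1|_\lambda$. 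This $P_\lambda$ automatically has property~$\mathcal{P}$: both $\Phi_1|_\lambda$ and $\Phi_{\phi_\lambda}$ send $(AM_A,R(A))$ to $(\phi_\lambda(AM_A),\phi_\lambda(R(A)))$, so composing with $\Phi_{\phi_\lambda}^{-1}$ cancels $\phi_\lambda$ and restores $AM_A=M_{|A^*|}$ and $R(A)$. The one-dimensional blocks are vacuous. Since $M_U$ and $B_\phi$ act on disjoint sets of blocks, assembling yields $\Phi=\Pi\circ M_U\circ B_\phi\circ P$, displaying the factors in the order~(i)--(iv), with $P$ block-preserving and of property~$\mathcal{P}$ \emph{on each block}.

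The crux, and main obstacle, will be upgrading $P$ to a genuine type-(iv) factor: showing it has property~$\mathcal{P}$ \emph{globally} and carries norm-achieving blocks to norm-achieving blocks. Per-block property~$\mathcal{P}$ cannot do this by itself, because the norm-achieving set $\{\lambda:\|AP_\lambda\|=\|A\|\}$ sees the \emph{global} norm $\|A\|$, which a block-wise map may rescale unevenly; the global orthogonality structure must intervene. The lever is the clean description $A\perps B$ iff $M_{|A^*|}\not\subset\overline{R(B)}$ (combine Lemma~\ref{sBJ-pure} with $M_{|A^*|}=AM_A$ and $\ker B^*=\overline{R(B)}^\perp$). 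From it, $M_U$ and $B_\phi$ transport $\perps$ bijectively and so are \emph{global} strong BJ isomorphisms, whence $P=(M_U\circ B_\phi)^{-1}\circ\Phi_1$ is one too. Now for any single-block unit vector $\xi\in\cH_\lambda$ the projection $E_\xi$ lies in $\A$; by Lemma~\ref{rank-n} the operator $P(E_\xi)$ stays rank-one, and property~$\mathcal{P}$ on that block gives $M_{|P(E_\xi)^*|}=\CC\xi$, so $R_{P(E_\xi)}^s=R_{E_\xi}^s$ by Lemma~\ref{Rset-cpt}. Since $P$ preserves $\perps$ and, again by Lemma~\ref{Rset-cpt}, $R_{E_\xi}^s\subset R_A^s\Leftrightarrow\xi\in M_{|A^*|}$, I would conclude $\xi\in M_{|A^*|}\Leftrightarrow\xi\in M_{|P(A)^*|}$ for every such $\xi$; as both spaces are block-diagonal this forces $M_{|P(A)^*|}=M_{|A^*|}$. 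Together with the block-wise range equality $\overline{R(P(A))}=\overline{R(A)}$ this is exactly global property~$\mathcal{P}$, and it delivers the norm-achieving statement. The remaining points — the precise conjugate-linear reading of ``multiplication by an anti-unitary'' in~(ii) and the trivial one- and two-dimensional bookkeeping — are routine given Theorem~\ref{thm:singleblock-a} and Remark~\ref{rem:examples-a}.
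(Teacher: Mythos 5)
Your route coincides with the paper's own proof: extract the block permutation from (the proof of) Theorem~\ref{nonlinear-1}, reduce to single blocks via Lemma~\ref{reduction}, read off the local factors from Theorem~\ref{thm:singleblock-a} and Remark~\ref{rem:examples-a}, and finally upgrade per-block property $\mathcal{P}$ to the global one — the paper does this last step by applying Lemmas~\ref{Rset-cpt} and~\ref{Lset-cpt} on norm-attaining blocks, much as you do with $R$-sets of rank-one projections. However, your crux paragraph contains a false auxiliary claim. The ``clean description'' $A\perps B$ iff $M_{|A^*|}\not\subset\overline{R(B)}$ is wrong: by Lemma~\ref{sBJ-pure} combined with Lemma~\ref{cpt-pure-state} (this is exactly the first lemma of the paper's Concluding Remarks), $A\perps B$ holds iff $M_{|A^*|}\cap\overline{R(B)}^{\perp}$ contains a nonzero vector (automatically a single-block one, since both subspaces are block-diagonal), and non-containment does not produce such a vector. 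Concretely, in $\cK(\CC^2)$ take $A=\mathrm{diag}(1,\tfrac12)$ and $B=(e_1+e_2)\otimes e_1$: then $M_{|A^*|}=\CC e_1\not\subset\overline{R(B)}=\CC(e_1+e_2)$, yet $M_{|A^*|}\cap\overline{R(B)}^{\perp}=\CC e_1\cap\CC(e_1-e_2)=\{0\}$ and indeed $B^*e_1=e_1\neq 0$, so $A\not\perps B$. Your intended conclusion — that $M_U$ and $B_\phi$ are \emph{global} strong BJ isomorphisms — survives, because these maps transport both $M_{|A^*|}$ and $\ker A^*=\overline{R(A)}^{\perp}$ blockwise bijectively and hence preserve the correct intersection criterion; but the stated lever must be replaced.

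The second flaw is that you assert the ``block-wise range equality $\overline{R(P(A))}=\overline{R(A)}$'' for arbitrary $A$ as if it followed from per-block property $\mathcal{P}$. It does not, for precisely the reason you yourself flagged on the $M$-side: $P$ preserves each $\sum_{\lambda\in\Lambda_0}\A P_\lambda$, but nothing says the $\lambda$-component of $P(A)$ for a multi-block $A$ is obtained from $AP_\lambda$, so the per-block information only covers single-block elements such as $E_\xi$. The gap is closed by mirroring your own $R$-set argument on the kernel side, which is where the paper invokes Lemma~\ref{Lset-cpt}: for a single-block unit vector $\zeta$ one has $\zeta\in\ker|A^*|$ iff $E_\zeta\perps A$ (Lemma~\ref{sBJ-pure}, since $\cP_{E_\zeta}=\{\omega_\zeta\}$); now $P(E_\zeta)$ is rank-one in the same block (Lemma~\ref{rank-n}) and per-block property $\mathcal{P}$ makes $|P(E_\zeta)^*|$ a positive multiple of $E_\zeta$, so $E_\zeta\perps A$ iff $P(E_\zeta)\perps P(A)$ iff $\zeta\in\ker|P(A)^*|$; block-diagonality of the two kernels then forces $\ker|P(A)^*|=\ker|A^*|$, i.e., $\overline{R(P(A))}=\overline{R(A)}$. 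With these two repairs your argument is complete and is, in substance, the same proof as the paper's.
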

\begin{proof}[Sketch of the proof] Following the proof (and notations) of Theorem~\ref{nonlinear-1} there exists a permutation of indices $\varphi\colon\Lambda\to\Lambda$ such that $\dim\cH_\lambda=\dim\cH_{\varphi(\lambda)}$ and $\Lambda_{\Phi(A)}=\varphi(\Lambda_A)$ (c.f.~Lemma~\ref{central-ortho} for notation~$\Lambda_A$) for nonzero $A\in\A$. We extend  $\varphi$ to a linear $\ast$-isomorphism on $\A$ by
$$\varphi\colon A=\bigoplus A_\lambda\mapsto \bigoplus A_{\varphi(\lambda)};$$
being a $\ast$-isomorphism it preserves strong BJ orthogonality in both directions. Then, $\Phi':=\varphi^{-1}\circ \Phi$ is a strong BJ isomorphism and $ \Phi'( \sum_{\lambda \in \Lambda_0}\A P_\lambda ) = \sum_{\lambda \in \Lambda_0}\A P_\lambda$ for each $\Lambda_0 \subset \Lambda$.

The restriction of $\Phi'$ to a sum of  nonempty collection of blocks in $\A$ remains to be a strong BJ isomorphism by Lemma~\ref{reduction}; its structure on individual blocks $\cK(\cH_\lambda)$ is given in Theorem~\ref{thm:singleblock-a} and Remark~\ref{rem:examples-a} and takes the form (ii)-(iv), restricted to $\cK(\cH_\lambda)$. The maps in (ii)--(iii) do not change the norm of individual blocks and are strong BJ  isomorphisms (again see Example~\ref{exa:wild}); by composing  $\Phi'$ with their inverses we obtain  a strong BJ isomorphism $\Phi''$ which has property ${\mathcal P}$ when restricted  to  individual blocks and does not change the norm of blocks. Hence, by applying Lemmas~\ref{Rset-cpt}  and \ref{Lset-cpt} on each norm-attaining block of $B\in\A$ and on $B$, it has globally the property ${\mathcal P}$ and takes the form (iv).
\end{proof}
Linear strong BJ isomorphisms have a simpler structure.
\begin{theorem}\label{thm:generallinear} Let $\A=\bigoplus \cK(\cH_\lambda)$ be a compact $C^*$-algebra and $\Phi\colon\A\to\A$ be a linear  strong BJ isomorphism. Then, there exists a scalar $\alpha\neq0$ such that $\frac{1}{\alpha}\Phi$ is composed of
\begin{itemize}
    \item[(i)] A  permutation of blocks which respects their dimensions.
\item[(ii)] A map $X\mapsto UXV^\ast$ for some unitaries $U=\bigoplus U_\lambda$ and $V=\bigoplus V_\lambda$ with  $U_\lambda,V_\lambda\in\cB(\cH_\lambda)$.
\end{itemize}
\end{theorem}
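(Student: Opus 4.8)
The plan is to read the whole statement off Corollary~\ref{lin-main}, which already carries out all the analytic work. Applied to a linear strong BJ isomorphism $\Phi$ of the compact $C^*$-algebra $\A=\bigoplus_\lambda\cK(\cH_\lambda)$, it supplies a nonzero scalar $\alpha$, a $*$-isomorphism $\Psi\colon\A\to\A$, and a unitary $V\in\mathcal{M}(\A)$ with $\alpha^{-1}\Phi(A)=\Psi(VA)$ for all $A\in\A$. The task then reduces to translating the two abstract factors $\Psi$ and $V$ into the concrete block form (i)--(ii), so the argument will be a sketch built from structural identifications rather than estimates.

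First I would pin down $V$. Since $\A$ is a $c_0$-direct sum, its multiplier algebra is the bounded ($\ell_\infty$) product $\prod_\lambda\cB(\cH_\lambda)$; hence the unitary $V$ is exactly a block-diagonal family $V=\bigoplus_\lambda V_\lambda$ with each $V_\lambda$ unitary on $\cH_\lambda$.

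Next I would invoke the classical description of $*$-isomorphisms between $c_0$-sums of the elementary algebras $\cK(\cH_\lambda)$. Because $\dim\cH_\lambda$ is a complete $*$-isomorphism invariant of $\cK(\cH_\lambda)$, the automorphism $\Psi$ is governed by a dimension-preserving permutation $\tau$ of $\Lambda$ together with unitaries $W_\lambda\colon\cH_{\tau(\lambda)}\to\cH_\lambda$, acting by $\Psi(A)_\lambda=W_\lambda A_{\tau(\lambda)}W_\lambda^*$. Choosing fixed unitary identifications $S_\lambda\colon\cH_{\tau(\lambda)}\to\cH_\lambda$, I would isolate the pure relabelling $\Pi_\tau(A)_\lambda=S_\lambda A_{\tau(\lambda)}S_\lambda^*$ as the dimension-respecting block permutation of item (i).

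The remaining step is algebraic bookkeeping. Writing the $\lambda$-component of $\Psi(VA)$ as $(W_\lambda V_{\tau(\lambda)})A_{\tau(\lambda)}W_\lambda^*$ and factoring off $\Pi_\tau$, I would arrive at $\alpha^{-1}\Phi=G\circ\Pi_\tau$ with $G\colon C\mapsto UCV'^{*}$ block-diagonal, where $U_\lambda=W_\lambda V_{\tau(\lambda)}S_\lambda^{*}$ and $V'_\lambda=W_\lambda S_\lambda^{*}$ are compositions of unitaries on $\cH_\lambda$, hence unitary; this is precisely item (ii). I do not expect any genuine obstacle here: the only delicate point --- that the Jordan factor coming from the underlying isometry is a $*$-isomorphism rather than a $*$-anti-isomorphism --- has already been resolved inside Corollary~\ref{lin-main} (through Theorem~\ref{linear-isometry}), so what is left is just this matching of indices and unitaries.
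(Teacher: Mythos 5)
Your proposal is correct, and after the common starting point it follows a genuinely different route from the paper. Both arguments put all the analytic weight on Corollary~\ref{lin-main}, but the paper then re-enters the orthogonality machinery: it first composes $\Phi$ with a block permutation extracted from Theorem~\ref{nonlinear-1} (as in Theorem~\ref{thm:general-a}) so that the new map fixes every subsum $\sum_{\lambda\in\Lambda_0}\A P_\lambda$, uses Lemma~\ref{reduction} and rank-one preservation from Lemma~\ref{rank-n}, and only then identifies each block restriction as $X_\lambda\mapsto W_\lambda V_\lambda X_\lambda V_\lambda^\ast$ via the spatial form of $*$-automorphisms of $\cK(\cH_\lambda)$, assembling the result ``by linearity and continuity.'' You instead spend the orthogonality theory once, in the single application of Corollary~\ref{lin-main} giving $\alpha^{-1}\Phi=\Psi(V\,\cdot\,)$, and finish with pure $C^*$-structure theory: the identification $\mathcal{M}\bigl(\bigoplus_{c_0}\cK(\cH_\lambda)\bigr)=\prod_\lambda^{\ell_\infty}\cB(\cH_\lambda)$ (so $V=\bigoplus V_\lambda$ is block-diagonal --- note this is even immediate from the paper's concrete definition of $\mathcal{M}$, since by Remark~\ref{rem:SVD-blockwise} any multiplier must commute with all $P_\lambda$), and the classical fact that a $*$-isomorphism of a $c_0$-sum of elementary algebras permutes the minimal closed two-sided ideals (hence preserves block dimensions) and is spatially implemented on each block. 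Your index bookkeeping $U_\lambda=W_\lambda V_{\tau(\lambda)}S_\lambda^\ast$, $V'_\lambda=W_\lambda S_\lambda^\ast$ checks out, since $A_{\tau(\lambda)}=S_\lambda^\ast C_\lambda S_\lambda$ for $C=\Pi_\tau(A)$. What each approach buys: the paper's version stays uniform with the nonlinear Theorem~\ref{thm:general-a} and needs no structure theory for $*$-isomorphisms of direct sums beyond the single-block case, while yours is shorter and more self-contained at this stage, trading the renewed orthograph arguments for two standard $C^*$-facts; the only points you should spell out in a full write-up are that $\Psi(A)_\lambda$ depends solely on $A_{\tau(\lambda)}$ (which follows from the ideal permutation together with the norm convergence of $A=\sum_\mu AP_\mu$) and a reference for the spatial implementation of $*$-isomorphisms $\cK(\cH_1)\to\cK(\cH_2)$, the two-space analogue of the result the paper cites from Douglas.
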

\begin{proof}[Sketch of the proof]
 By Corollary~\ref{lin-main} we can assume $\Phi$ is an isometry.
    As in the proof of Theorem~\ref{thm:general-a} we  compose $\Phi$ by a suitable map of the form (i) to obtain a strong BJ isomorphism  $\Phi'$ with the property  $ \Phi'( \sum_{\lambda \in \Lambda_0}\A P_\lambda ) = \sum_{\lambda \in \Lambda_0}\A P_\lambda$ for each $\Lambda_0 \subset \Lambda$. By Corollary~\ref{lin-main} this map is a multiplication of a unitary  and a $\ast$-isomorphism, moreover, it preserves rank-one by Lemma~\ref{rank-n}. Hence, its restriction on individual block $\cK(\cH_\lambda)\subseteq\A$ takes the form $X_\lambda\mapsto W_\lambda V_\lambda X_\lambda V_\lambda^\ast$ for some unitaries $W_\lambda,V_\lambda\in\cB(\cH_\lambda)$,  (see~\cite[Corollary~5.43]{Douglas1972}) and the result follows by linearity and continuity.
\end{proof}

%%%%%%%%%%%%%%%%%%%%%%%%%%%%%%%%%%%%%%%%%%%%%%%%%%%%%%%%%%
%%%%%%%%%%%%%%%%%%%%%%%%%%%%%%%%%%%%%%%%%%%%%%%%%%%%%%%%%%

\begin{thebibliography}{99}

 \bibitem{Alaminos2004}
J.~Alaminos, M.~Brešar, and A.R.~Villena,
The strong degree of von Neumann algebras and the structure of Lie and Jordan derivations,
\emph{Math. Proc. Cambridge Philos. Soc.},
\textbf{137} (2004), no.~2, 441--463.
\bibitem{Ale68}
J.C. Alexander, \textit{Compact Banach algebras}, Proc. London Math. Soc. (3), \textbf{18} (1968), 1–18.

% AMW survey (1)
\bibitem{AMW12}
J. Alonso, H. Martini and S. Wu, \textit{On Birkhoff orthogonality and isosceles orthogonality in normed linear spaces}, Aequationes Math., \textbf{83} (2012), 153--189.

% AMW survey (2)
\bibitem{AMW22}
J. Alonso, H. Martini and S. Wu, \textit{Orthogonality types in normed linear spaces}, Surveys in geometry I, 97--170, Springer, Cham, 2022.


% Arambašić
\bibitem{AR14}
Lj. Aramba\v{s}i\'{c} and R. Raji\'{c},
\textit{A strong version of the Birkhoff-James orthogonality in Hilbert $C^*$-modules},
Ann. Funct. Anal., \textbf{5} (2014), 109--120.

\bibitem{AR15}
Lj. Aramba\v{s}i\'{c} and R. Raji\'{c}, \textit{Operators preserving the strong Birkhoff-James orthogonality on $\mathbf{B}(H)$}, Linear Algebra Appl., \textbf{471} (2015), 394--404.

\bibitem{AR16}
Lj. Aramba\v{s}i\'{c} and R. Raji\'{c}, \textit{On symmetry of the (strong) Birkhoff-James orthogonality in Hilbert $C^*$-modules}, Ann. Funct. Anal., \textbf{7} (2016), 17--23.

\bibitem{AGKRTZ24}
Lj. Aramba\v{s}i\'{c}, A. Guterman, B. Kuzma, R. Raji\'{c}, R. Tanaka and S. Zhilina,
\textit{Geometric nonlinear classification of Hilbert $C^*$-modules based on strong Birkhoff-James orthogonality},
Rev. R. Acad. Cienc. Exactas F\'{\i}s. Nat. Ser. A Mat. RACSAM, \textbf{118} (2024), Paper No. 151, 23 pp.

\bibitem{AGKRZ23}
Lj. Aramba\v{s}i\'{c}, A. Guterman, B. Kuzma, R. Raji\'{c} and S. Zhilina, \textit{What does Birkhoff-James orthogonality know about the norm?}, Publ. Math. Debrecen, \textbf{102} (2023), 197--218.

% Arveson
\bibitem{Arveson} W.B.~Arveson,
\emph{An Invitation to C\(^*\)-Algebras},
Graduate Texts in Mathematics, Vol.~39,
Springer-Verlag, Berlin, 1976.

\bibitem{Aupetit2000}
B.~Aupetit,
Spectrum-preserving linear mappings between Banach algebras or Jordan--Banach algebras,
\emph{J. London Math. Soc. (2)} \textbf{62} (2000), no.~3, 917--924.

% Birkhoff
\bibitem{Bir35}
G. Birkhoff, \textit{Orthogonality in linear metric spaces}, Duke Math. J. 1 (1935) 169–172.

% Blackadar next
\bibitem{Bla06}
B. Blackadar,
\textit{Operator algebras. Theory of $C^*$-algebras and von Neumann algebras},
Springer-Verlag, Berlin, 2006.


% Blanco-Turnsek
\bibitem{BT06}
A. Blanco and A. Turn\v{s}ek, \textit{On maps that preserve orthogonality in normed spaces}, Proc. R. Soc. Edinb. A, \textbf{136} (2006) 709--716.

% Bohnenblust
\bibitem{Boh42}
F. Bohnenblust, \textit{A characterization of complex Hilbert spaces}, Port. Math. \textbf{3} (1942), 103--109.

% Bourhim-Mashreghi survey
\bibitem{BM15}
A. Bourhim and J. Mashreghi, \textit{A survey on preservers of spectra and local spectra}, Invariant Subspaces of the Shift Operator, Comtemp. Math., vol. 683, Centre Rech. Math. Proc., Amer. Math. Soc., pp. 45--98, Providence, RI, 2015.

\bibitem{Bresar} M. Bre\v sar, Commuting traces of biadditive mappings, commutativity–preserving
mappings and Lie mappings, \emph{Trans. Amer. Math. Soc.} {\bf 335} (1993), 525--546.

% Bresar et al. Book
\bibitem{BCM07}
M. Bre\v{s}ar, M.A. Chebotar, W.S. Martindale III, \textit{Functional identities}, Frontiers in Mathematics, Birkh\"{a}user Verlag, Basel, 2007.

% Bresar and Semrl
\bibitem{BresarSemrl1998}
M.~Bre{\v{s}}ar and P.~\v{S}emrl,
Invertibility preserving maps preserve idempotents,
\emph{Michigan J. Math.} \textbf{45} (1998), 483--488.

% Conway
\bibitem{Con90}
J.B. Conway,
\textit{A course in functional analysis},
Springer-Verlag, New York, 1990.

% Day
\bibitem{Day47}
M.M. Day, \textit{Some characterizations of inner-product spaces}, Trans. Amer. Math. Soc., \textbf{62} (1947), 320--337.

% Dieudonne.
\bibitem{Dieudonne1949}
J.~Dieudonn{\'e},
Sur une g{\'e}n{\'e}ralisation du groupe orthogonal {\`a} quatre variables,
\emph{Arch. Math.} \textbf{1} (1949), 282--287.

% Djokovic and Platonov
\bibitem{DjokovicPlatonov1993}
D.{\v{Z}}.~{\DJ}okovi{\'c} and V.P. Platonov,
Algebraic groups and linear preserver problems.
\emph{C.R. Acad. Sci. Paris S{\'e}r. I Math.} \textbf{317} (1993), no.~10, 925--930.

\bibitem{DjokovicLi1994}
D.\v{Z}. {\DJ}okovi{\'c},  and C.K. Li,
Overgroups of some classical linear groups with applications to linear preserver problems,
\emph{Linear Algebra Appl.} \textbf{197--198} (1994), 31--61.

\bibitem{Dong2022}
Y.~Dong, L.~Li, L.~Molnár, and N.-C.~Wong,
Transformations preserving the norm of means between positive cones of general and commutative C$^*$-algebras,
\emph{J.  Operator Theory},
\textbf{88} (2022), no.~2, 365--406.

% Douglas
\bibitem{Douglas1972}
R.G.~Douglas,
\emph{Banach Algebra Techniques in Operator Theory},
Academic Press, New York, 1972.

% Esmeral et al.
\bibitem{Esmeral2023}
K. Esmeral, H.G. Feichtinger, O. Hutn{\'\i}k, and E.A. Maximenko,
\emph{Approximately invertible elements in non-unital normed algebras},
J. Math. Anal. Appl., \textbf{523} (2023), Art. 126986.

\bibitem{Gardner1984}
L.T.~Gardner,
\newblock An Elementary Proof of the Russo–Dye Theorem,
\newblock {\em Proceedings of the American Mathematical Society}, Vol.~90 (1984), pp.~181.


% Ghosh & Sain
\bibitem{GhoshSain2017}
S.~Ghosh and S.~Sain,
\newblock Birkhoff--James orthogonality of linear operators on finite dimensional Banach spaces,
\newblock {\em J. Math. Anal. Appl.}, 447:860--866, 2017.

\bibitem{HatoriMolnar2014}
O.~Hatori and L.~Moln\'ar,
Isometries of the unitary groups and Thompson isometries of the spaces of invertible positive elements in C$^*$-algebras,
\emph{J. Math. Anal. Appl},
\textbf{409} (2014), 158--167.

% Hou
\bibitem{Hou}
J.C. Hou.
Rank-preserving linear maps on $B(X)$.
Sci. China A \textbf{32} (1989), 929--940.

% Ilisevic-Turnsek
\bibitem{IT22}
D. Ili\v{s}evi\'{c} and A. Turn\v{s}ek, \textit{Nonlinear Birkhoff-James orthogonality preservers in smooth normed spaces}, J. Math. Anal. Appl., \textbf{511} (2022), 126045, 10 pp.

% James
\bibitem{Jam47a}
R.C. James, \textit{Inner product in normed linear space}, Bull. Amer. Math. Soc., \textbf{53} (1947), 559--566.

\bibitem{James}
R.C.~James,
\textit{Orthogonality and linear functionals in normed linear spaces},
Trans. Amer. Math. Soc., \textbf{61} (1947), 265--292.

% Kadison 1951
\bibitem{Kad51}
R.V. Kadison, \textit{Isometries of operator algebras}, Ann. of Math., \textbf{54} (1951), 325--338.

% Kadison 1965
\bibitem{Kadison1965} R.V.~Kadison,
Transformations of states in operator theory and dynamics,
\textit{Topology}, vol.~3, Suppl.\ 2, pp.\ 177–198, 1965.
% Kadison & Ringrose

\bibitem{KR97a}
R.V. Kadison and J.R. Ringrose, \textit{Fundamentals of the theory of operator algebras. Vol. I. Elementary theory}, American Mathematical Society, Providence, RI, 1997.

\bibitem{KR97b}
R.V. Kadison and J.R. Ringrose,
\textit{Fundamentals of the theory of operator algebras. Vol. II. Advanced theory},
American Mathematical Society, Providence, RI, 1997.

% Kakutani
\bibitem{Kak39}
S. Kakutani, \textit{Some characterizations of Euclidean space}, Jpn. J. Math., \textbf{16} (1939), 93--97.

% Kečkić & Stefanović
\bibitem{KS23}
D.J. Ke\v{c}ki\'{c} and S. Stefanovi\'{c},
\textit{Isolated vertices and diameter of the BJ-orthograph in $C^*$-algebras},
J. Math. Anal. Appl. \textbf{528} (2023), no. 1, Paper No. 127476, 13 pp.

% Koldobsky
\bibitem{Kol93}
A. Koldobsky, \textit{Operators preserving orthogonality are isometries}, Proc. R. Soc. Edinb. A, \textbf{123} (1993) 835--837

% Komuro, Saito & Tanaka
\bibitem{KST18}
N. Komuro, K.-S. Saito and R. Tanaka,
\textit{Symmetric points for (strong) Birkhoff orthogonality in von Neumann algebras with applications to preserver problems},
J. Math. Anal. Appl., \textbf{463} (2018), 1109--1131.

% Kuzma
\bibitem{KSi25a}
B. Kuzma and S. Singla, \textit{Birkhoff-James classification of finite-dimensional $C^*$-algebras}, Proc. Amer. Math. Soc., \textbf{153} (2025), 1709-1721.

\bibitem{KSi25b}
B. Kuzma and S. Singla, \textit{Non-linear classification of finite-dimensional simple $C^*$-algebras}, Filomat, \textbf{39} (2025), 1465--1475.

\bibitem{KSt26}
B. Kuzma and S. Stefanovi\'{c}, \textit{Isomorphisms of Birkhoff-James orthogonality on finite-dimensional $C^*$-algebra}, J. Math. Anal. Appl., \textbf{553} (2026), Paper No. 129957, 26 pp.

% Lance
\bibitem{Lance1995}
E.C. Lance,
\textit{Hilbert $C^*$-Modules: A Toolkit for Operator Algebraists},
London Mathematical Society Lecture Note Series, Vol. 210,
Cambridge University Press, Cambridge, 1995.

\bibitem{LiPierce2001}
C.-K.~Li and S.~Pierce,
\emph{Linear preserver problems},
\textit{Amer. Math. Monthly} \textbf{108} (2001), no.~7, 591--605.

\bibitem{LT92}
C.-K. Li and N.-K. Tsing, \textit{Linear Preserver Problems: A brief introduction and some special techniques}, Linear Algebra Appl., \textbf{162-164} (1992), 217--235.

\bibitem{MM59}
M. Marcus and B.N. Moyls, \textit{Linear transformations on algebras of matrices}, Canad. J. Math., \textbf{11} (1959), 61--66.

\bibitem{Murphy1990}
G.J. Murphy, \textit{C$^{*}$-Algebras and Operator Theory}, Academic Press, Inc., Boston, MA, 1990.

% Molnar
\bibitem{Mol07}
L. Moln\'{a}r, \textit{Selected preserver problems on algebraic structures of linear operators and on function spaces}, Lecture Notes in Mathematics, 1895. Springer-Verlag, Berlin, 2007.

\bibitem{Molnar2022}
L.~Moln\'ar,
On certain order properties of non Kubo-Ando means in operator algebras,
\emph{Integral Equations and Operator Theory},
\textbf{94} (2022), no.~3, Paper No.~25, 25~pp.

\bibitem{Molnar2019}
L.~Molnár,
Quantum Rényi relative entropies on density spaces of C$^*$-algebras: their symmetries and their essential difference,
\emph{J.~Funct.~Anal.}~\textbf{277} (2019), no.~9, 3098--3130.

% Paul, Mal & Wójcik
\bibitem{PaulMalWojcik2019}
K.~Paul, A.~Mal, and P.~Wójcik,
\newblock Symmetry of Birkhoff--James orthogonality of operators defined between infinite dimensional Banach spaces,
\newblock {\em Linear Algebra Appl.}, 563:142--153, 2019.

% Paterson & Sinclair
\bibitem{PS72}
A.L. T. Paterson and A.M. Sinclair,
\textit{Characterisation of isometries between $C^*$-algebras},
J. London Math. Soc. (2), \textbf{5} (1972), 755--761.

% Sourour
\bibitem{Sou} A.R. Sourour,
Isometries of norm ideals of compact operators.
J. Functional Analysis \textbf{43} (1981), no. 1, 69--77.
% Petek & Šemrl
\bibitem{Sem-Pet}
T. Petek, P. \v Semrl,
Adjacency preserving maps on matrices and operators.
Proc. R. Soc. Edinb., Sect. A, Math. \textbf{132}, No. 3, (2002) 661--684.

%Platonov Djokovic 1995
\bibitem{PlatonovDjokovic1995}
V.P. Platonov and D.{\v{Z}}. {\DJ}okovi{\'c},
Linear preserver problems and algebraic groups,
\emph{Math. Ann.} \textbf{303} (1995), no.~1, 165--184.

%Sakai 1998
\bibitem{Sak98}S. Sakai, \textit{$C^*$-algebras and $W^*$-algebras}, Springer-Verlag, Berlin, 1998.

%Sherman 2006
\bibitem{Sherman} D. Sherman,
\emph{A new proof of the noncommutative Banach-Stone theorem},
Banach Center Publ. \textbf{73} (2006), 363--375.

%Stormer 1965
\bibitem{Stormer1965Jordan}E.~St{\o}rmer,
``On the Jordan structure of \(C^*\)-algebras,''
\textit{Transactions of the American Mathematical Society}, vol.~120, no.~3, pp.\ 428–437, 1965.

% Tanaka 2021
\bibitem{TanakaContinuous}
R. Tanaka,
\newblock Non-linear modular Birkhoff-James orthogonality preservers between spaces of continuous functions,
\newblock {\em J. Math. Anal. Appl.}, 495(2):124744, 2021.

% Tanaka 2022
\bibitem{Tan22a}
R. Tanaka, \textit{Nonlinear equivalence of Banach spaces based on Birkhoff-James orthogonality}, J. Math. Anal. Appl., \textbf{505} (2022), Paper No. 125444, 12 pp.

\bibitem{Tan22b}
R. Tanaka, \textit{Nonlinear equivalence of Banach spaces based on Birkhoff-James orthogonality, II}, J. Math. Anal. Appl., \textbf{514} (2022), 126307, 19 pp.

\bibitem{Tan22c}
R. Tanaka, \textit{On Birkhoff-James orthogonality preservers between real non-isometric Banach spaces}, Indag. Math. (N.S.), \textbf{33} (2022), 1125--1136.

% Tanaka 2023c
\bibitem{Tan23c}
R. Tanaka, \textit{A Banach space theoretical characterization of abelian $C^*$-algebras}, Proc. Amer. Math. Soc. Ser. B, \textbf{10} (2023), 208--218.

% Tanaka 2025
\bibitem{Tan25}
R. Tanaka,
\textit{A characterization of abelian $C^*$-algebras in terms of strong Birkhoff-James orthogonality},
Acta Sci. Math. (Szeged), published online. DOI: https://doi.org/10.1007/s44146-025-00182-w

\end{thebibliography}
\end{document}